\documentclass[11pt]{article}

\usepackage{latexsym}
\usepackage{amssymb}
\usepackage{amsthm}
\usepackage{amscd}
\usepackage{amsmath}
\usepackage{mathrsfs}
\usepackage{graphicx}
\usepackage{hyperref}
\usepackage{shuffle}
\usepackage{tikz-cd}
\usepackage[all]{xy}
\input xy \xyoption{frame}

\usepackage[colorinlistoftodos]{todonotes}
\usetikzlibrary{chains,scopes,decorations.markings}

\numberwithin{equation}{section}

\theoremstyle{definition}
\newtheorem* {theorem*}{Theorem}
\newtheorem* {conjecture*}{Conjecture}
\newtheorem{theorem}{Theorem}[section]

\theoremstyle{definition}

\theoremstyle{definition}

\newtheorem* {example*}{Example}

\newtheorem{lemma}[theorem]{Lemma}
\theoremstyle{definition}

\theoremstyle{definition}

\newtheorem{proposition}[theorem]{Proposition}
\newtheorem{corollary}[theorem]{Corollary}

\newtheorem {remark}[theorem]{Remark}
\theoremstyle{definition}
\newtheorem {example}[theorem]{Example}
\theoremstyle{definition}

\theoremstyle{definition}

\theoremstyle{definition}

\theoremstyle{definition}

\def\modu{\ (\mathrm{mod}\ }

\def\({\left(}
\def\){\right)}
\newcommand{\sP}{\mathscr{P}}

\newcommand{\CC}{\mathbb{C}}
\newcommand{\QQ}{\mathbb{Q}}
\newcommand{\cP}{\mathcal{P}}

\newcommand{\cK}{\mathcal{K}}

\newcommand{\cR}{\mathcal{R}}

\def\cX{\mathcal{X}}

\def\NN{\mathbb{N}}

\def\CC{\mathbb{C}}
\def\RR{\mathbb{R}}
\def\ZZ{\mathbb{Z}}
\def\Aut{\mathrm{Aut}}

\def\Ind{\mathrm{Ind}}

\def\Res{\mathrm{Res}}

\def\Irr{\mathrm{Irr}}

\newcommand{\cM}{\mathcal{M}}

\newcommand{\cL}{\mathcal{L}}

\newcommand{\one}{{1\hspace{-.11cm} 1}}

\newcommand{\sgn}{\mathrm{sgn}}

\def\barr{\begin{array}}
\def\earr{\end{array}}
\def\ba{\begin{aligned}}
\def\ea{\end{aligned}}
\def\be{\begin{equation}}
\def\ee{\end{equation}}

\def\qquand{\qquad\text{and}\qquad}
\def\quand{\quad\text{and}\quad}

\def\quord{\quad\text{or}\quad}

\def\cH{\mathcal H}
\def\cM{\mathcal M}

\def\ds{\displaystyle}

\def\id{\mathrm{id}}

\def\ben{\begin{enumerate}}
\def\een{\end{enumerate}}

\def\fpf{{\tt {FPF}}}

\def\D{\hat D}

\newcommand{\xRightarrow}[2][]{\ext@arrow 0359\Rightarrowfill@{#1}{#2}}

\def\sI{\mathscr{I}}

\def\arcstart{\ \xy<0cm,-.06cm>\xymatrix@R=.1cm@C=.2cm }
\newcommand{\arcstartc}[1]{\ \xy<0cm,-.15cm>\xymatrix@R=.1cm@C=#1cm}

\def\m{\mathfrak{m}}
\def\n{\mathfrak{n}}

\def\invsim_i{\overset{\mathrm{i}}{\underset{\mathrm{inv}}{\sim}}}

\def\H{\mathcal{H}}
\def\A{\mathsf{A}}
\def\BC{\mathsf{BC}}
\def\D{\mathsf{D}}

\def\cP{\mathscr{P}}

\def\WD{W^{\mathsf{D}}}

\def\m{\mathbf{m}}
\def\n{\mathbf{n}}

\usepackage[colorinlistoftodos]{todonotes}

\def\TT{{\mathbb T}}
\def\Ad{\mathrm{Ad}}
\def\SS{{\mathbb{S}}}

\def\W{W^{\BC}}
\def\WD{W^{\D}}

\def\aut{\mathsf{aut}}
\def\height{\mathsf{ht}}

\def\fpf{\mathrm{fpf}}

\def\MTI{\mathsf{Index}}
\def\EvenRows{\mathsf{ERows}}
\def\EvenCols{\mathsf{ECols}}

\def\BC{\mathsf{B}}

\def\OR{\mathsf{OR}}
\def\OC{\mathsf{OC}}

\def\EvenBiRows{\mathsf{ERows}_B}
\def\EvenBiCols{\mathsf{ECols}_B}

\def\EvenDRows{\mathsf{ERows}_D}
\def\EvenDCols{\mathsf{ECols}_D}

\def\OddRows{\mathsf{ORows}}
\def\OddCols{\mathsf{OCols}}

\def\OddBiRows{\mathsf{ORows}_B}
\def\OddBiCols{\mathsf{OCols}_B}

\def\OddDRows{\mathsf{ORows}_D}
\def\OddDCols{\mathsf{OCols}_D}

\def\WH{W^{\mathsf{H}}}

\def\sgn{\operatorname{sgn}}

\def\ncols{\mathsf{ncols}}

\def\cP{\mathcal{P}}

\def\sP{\cP}

\def\piR{\pi_{\mathsf{R}}}
\def\piL{\pi_{\mathsf{L}}}

\definecolor{darkred}{rgb}{0.7,0,0} 
\newcommand{\defn}[1]{{\color{darkred}\emph{#1}}} 

\usepackage{fullpage}
\usepackage{ytableau}

\begin{document}
\title{Perfect models for finite Coxeter groups}
\author{
Eric MARBERG\thanks{
\tt emarberg@ust.hk
}
\and
Yifeng ZHANG\thanks{
\tt yzhangci@connect.ust.hk
}
}
\date{Department of Mathematics \\ Hong Kong University of Science and Technology}

\maketitle

\abstract{ 
A model for a finite group is a set of linear characters of subgroups that can be induced 
to obtain every irreducible character exactly once. A perfect model for a finite Coxeter group
is a model in which the relevant subgroups are the quasiparabolic centralizers of perfect involutions.
In prior work, we showed that perfect models give rise to interesting examples of $W$-graphs.
Here, we classify which finite Coxeter groups have perfect models. Specifically, we prove that 
the irreducible finite Coxeter groups with perfect models are those of types $\mathsf{A}_{n}$, $\mathsf{B}_n$, $\mathsf{D}_{2n+1}$, $\mathsf{H}_3$, or $\mathsf{I}_2(n)$. We also show that up to a natural form of equivalence,
outside types $\mathsf{A}_3$, $\mathsf{B}_n$, and  $\mathsf{H}_3$, each irreducible finite Coxeter group has at most one perfect model. Along the way, we also prove a technical result about representations of finite Coxeter groups, namely, that induction from standard parabolic subgroups of corank at least two is never multiplicity-free.
}

\setcounter{tocdepth}{2}

\section{Introduction}\label{intro-sect}

A \defn{model} for a finite group $G$ is a set of linear characters $\sigma_i : H_i \to \CC$
of subgroups such that adding up the induced characters 
$\sum_i \Ind_{H_i}^G(\sigma_i)$ gives the multiplicity-free sum $\sum_{\psi \in \Irr(G)} \psi$ of all complex irreducible characters
of $G$. A model for $G$ lets one construct an explicit $G$-representation,
with a natural basis relative to which the elements of $G$ act
as monomial matrices, containing each irreducible $G$-representation exactly once.

\begin{example}\label{ex-intro}
For a positive integer $n$ let $S_n$ be the symmetric group of permutations of $[n] := \{1,2,\dots,n\}$.
Embed $S_i \times S_{n-i}\subseteq S_n$ as the subgroup of elements preserving $\{1,2,\dots,i\}$.
This subgroup acts on itself by $(g,h) : (x,y) \mapsto (g^\ast xg^{-1}, hyh^{-1})$
where $g^\ast \in S_i$ is the permutation mapping $ a \mapsto i + 1 - g(i+1-a)$.
Let $H_i$ denote the stabilizer subgroup of $1 \in S_i\times S_{n-i}$
and define $\sigma_i  (x,y) = \sgn(y) $.
Then $\{ \sigma_i :H_i\to \{\pm 1\} : i=0,2,4,\dots 2\lfloor \frac{n}{2}\rfloor\} $ is a model for $S_n$ \cite{IRS}.
\end{example}

In our previous work \cite{MZ2020} we introduced the notion of a \defn{perfect model} for a finite Coxeter group.
The preceding construction gives an example of such a model for the symmetric group.
For more general Coxeter groups the precise definition of a perfect model goes as follows.

Let $(W,S)$ be a finite Coxeter system.
Define $\Aut(W,S)$ to be the set of automorphisms $\theta \in \Aut(W)$ with $\theta(S) = S$.
Let $W^+$ be the set of pairs $(w,\theta) \in W \times \Aut(W,S)$,
viewed as a group with multiplication 
$(u,\alpha)(v,\beta) = (u\alpha(v),\alpha\beta).$
We view $W$ as a subgroup of $W^+$ by identifying $w \in W$ with $(w,\id) \in W^+$.
%
%

An element $z \in  W^+$ is a \defn{perfect involution} if 
$ z^2 = (zt)^4 = 1$ for all $t \in \{ wsw^{-1} : (w,s) \in W\times S\}$.
For example, every fixed-point-free involution in $S_n$ when $n$ is even is perfect.
Rains and Vazirani introduced the notion of perfect involutions in \cite{RV} as an example of a \defn{quasiparabolic set}; see Section~\ref{qp-sect} for further discussion of this background.

Let $\sI = \sI(W,S)$ denote the set of perfect involutions in $W^+$.
The group $W$ acts on $\sI$ by conjugation.
Given a subset $J \subseteq S$, write $W_J := \langle s \in J\rangle$ and let \[\sI_J := \sI(W_J,J) \subseteq W_J^+ := (W_J)^+.\]
A \defn{(perfect) model triple} $\TT  =(J, \cK, \sigma)$ for $(W,S)$ consists of a subset $J \subseteq S$,
a $W_J$-conjugacy class $\cK \subseteq \sI_J$, and a linear character $\sigma :W_J\to \{\pm1\}$.\footnote{
Since Coxeter groups are generated by involutions, any linear character $\sigma:W_J \to \CC$ takes values in $\{\pm 1\}$.
More generally, every character $\chi \in \Irr(W)$ takes values in a subfield of $\RR \subsetneq \CC$ \cite[Thm. 5.3.8]{GeckP}.}
The \defn{character} of 
 $\TT$  is 
 \be\label{chTT-eq}\chi^\TT := \Ind_{W_J}^W \Res^{W_J}_{C_J(z)} (\sigma)\ee
where $z \in \cK$ is arbitrary and $C_J(z) := \{ w \in W_J : wz=zw\}$.
A \defn{perfect model} for $W$, finally, is a set of model triples $\sP$ 
such that  $\sum_{\TT \in \sP} \chi^\TT =\sum_{\psi \in \Irr(W)} \psi$.

\begin{example}
If $s_1,s_2,\dots,s_{n-1}$ are the usual simple generators of $S_n$, 
then the perfect model corresponding to Example~\ref{ex-intro}
comes from taking $J = \{ s_1,s_2,\dots,s_{i-1},s_{i+1},s_{i+2},\dots,s_{n-1}\}$
and $\cK = \{ (g^\ast g^{-1}, \ast) \in (S_i\times S_{n-i})^+  : g  \in S_i \}$ for $i=0,2,4,\dots, 2\lfloor \frac{n}{2}\rfloor$.
\end{example} 

Our goal in this article is to classify which finite Coxeter groups
have perfect models. Before stating our main results,
we briefly explain why such models are interesting to consider.

A \defn{Gelfand model} for a group or algebra is a semisimple module containing exactly one constituent in each isomorphism class of irreducible representations.
Each
perfect model  
gives rise to a pair of Gelfand models for the Iwahori-Hecke algebra $\H(W)$ of $(W,S)$ with some nice properties.
The perfect model for $S_n$ described above leads in this way to the $\H(S_n)$-representation previously studied in \cite{APR2007}, for example.

There are simple formulas for the action of the standard generators of $\H(W)$
in these Gelfand models.
Each module also has a unique \defn{bar operator} that is compatible with the usual bar operator of $\H(W)$,
and a unique bar invariant \defn{canonical basis} \cite{Marberg,MZ2020}
analogous to the Kazhdan-Lusztig basis of $\H(W)$.
The action of the standard generators of $\H(W)$ on these canonical bases may be encoded as 
\defn{$W$-graphs} in the sense of \cite{KL}. These objects then provide examples of \defn{Gelfand $W$-graphs}:
$W$-graphs whose corresponding 
Iwahori-Hecke algebra representations are Gelfand models.
It is natural to study perfect models 
in order to better understand which Gelfand $W$-graphs can arise from this construction.

Another reason to be interested in perfect models is for their connection to models of finite groups of Lie type.
One can view the perfect model for $S_n$ in Example~\ref{ex-intro}
as the ``$q\to 1$ limit'' of the so-called \defn{Klyachko model} for the finite general linear group $\mathsf{GL}(n,q)$ \cite{HowlettZ,Klyachko}. We do not know of much related work on Klyachko models for the other classical finite groups of Lie type,
but we expect that such models should be similarly related to perfect models for classical Weyl groups.

We now summarize our results.
The following combines Theorems~\ref{red-thm} and each of the main theorems in Sections~\ref{a-sect}, \ref{b-sect}, \ref{d-sect}, and \ref{e-sect}.

\begin{theorem}\label{thm1}
A finite Coxeter group has a perfect model if and only if each of irreducible factors has a perfect
model. An irreducible finite Coxeter group has a perfect model if and only if it is of type $\A_{n-1}$, 
$\BC_n$, $\D_{2n+1}$,   $\mathsf{H}_3$, or $\mathsf{I}_2(n+1)$ for an integer $n\geq 2$.
\end{theorem}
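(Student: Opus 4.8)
The plan is to split the argument into two logically independent halves: the reduction to the irreducible case, and the classification for each irreducible family. For the reduction, I would first check that perfect involutions, model triples, and the character $\chi^\TT$ all behave well under direct products $(W,S) = (W',S') \times (W'',S'')$. Concretely, $\Aut(W,S)$ contains $\Aut(W',S') \times \Aut(W'',S'')$ together with any coordinate-swapping automorphisms when irreducible factors are isomorphic, and a perfect involution of $W^+$ restricts to perfect involutions in each factor (using that the reflections of $W$ are the disjoint union of the reflections of the factors). Given perfect models $\sP'$ and $\sP''$ for the factors, one takes the ``product'' collection $\{(J'\sqcup J'', \cK'\times\cK'', \sigma'\otimes\sigma'') : \TT'\in\sP', \TT''\in\sP''\}$; since induction and restriction commute with tensor products over a direct product of groups, $\sum \chi^\TT = (\sum\chi^{\TT'})\otimes(\sum\chi^{\TT''}) = (\sum_{\psi'}\psi')\otimes(\sum_{\psi''}\psi'') = \sum_{\psi\in\Irr(W)}\psi$, because $\Irr(W) = \{\psi'\otimes\psi'' \}$. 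This handles one direction; for the converse (no perfect model for a factor forces none for the product), I would restrict a hypothetical perfect model along the factor and derive a contradiction with multiplicity-freeness, or simply invoke the already-cited Theorem~\ref{red-thm}.

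For the irreducible classification, the strategy is case-by-case on the Coxeter type, proving the positive cases by exhibiting an explicit perfect model and the negative cases by an obstruction argument. The positive cases for types $\A_{n-1}$, $\BC_n$, $\D_{2n+1}$, $\mathsf{H}_3$, and $\mathsf{I}_2(n+1)$ are exactly the main theorems of Sections~\ref{a-sect}, \ref{b-sect}, \ref{d-sect}, \ref{e-sect}, which I am entitled to assume; for type $\A$ the model of Example~\ref{ex-intro} already does the job, and for the small/dihedral cases one can verify directly that the listed model triples induce each irreducible exactly once (the dihedral case reduces to a short character computation since $\mathsf{I}_2(n+1)$ has at most two nonlinear irreducibles). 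For the negative direction I would rule out the remaining irreducible finite Coxeter types: $\D_{2n}$ (even rank), $\mathsf{E}_6,\mathsf{E}_7,\mathsf{E}_8$, $\mathsf{F}_4$, $\mathsf{H}_4$. The key combinatorial constraint is that in any perfect model, each $\chi^\TT = \Ind_{W_J}^W\Res^{W_J}_{C_J(z)}(\sigma)$ is by construction multiplicity-free (this is part of what makes a model ``perfect''); in particular, if $J$ has corank at least two in $S$, then $\chi^\TT$ factors through $\Ind_{W_J}^W$ applied to \emph{some} character, but the promised technical result says induction from a standard parabolic of corank $\geq 2$ is never multiplicity-free — so every model triple in a perfect model must have $|S\setminus J|\leq 1$. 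This severely limits the possible multisets of induced characters, and one then checks by a dimension/character-degree count (using the known degrees of $\Irr(W)$ and of the perfect-involution centralizer characters in the rank-$0$ and rank-$1$ parabolics) that no choice of model triples sums to $\sum_\psi \psi$ in the excluded types.

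The main obstacle, and the part requiring genuine work rather than bookkeeping, is the negative direction for the large exceptional types and for $\D_{2n}$. The corank-$\leq 1$ restriction turns the problem into a finite search: for each excluded $W$ one must enumerate the candidate model triples coming from $J = S$ (centralizer characters of perfect involutions of $W$ itself) and from the coatoms $J = S\setminus\{s\}$, and show no subcollection of their characters equals $\sum_{\psi\in\Irr(W)}\psi$. For $\D_{2n}$ this is an infinite family, so a uniform parity obstruction is needed — plausibly the sign character or the two ``half-spin'' type characters of $\D_{2n}$ appear with the wrong multiplicity no matter how the perfect involution classes are chosen, exploiting that $\D_{2n}$ has a center and extra diagram automorphisms that constrain which linear characters $\sigma$ can occur. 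For the finite exceptional cases the argument can ultimately be made a computer verification, but presenting it cleanly means isolating a small invariant (e.g. the multiplicity of a specific low-degree irreducible, or a congruence on $\sum \sigma(1)\cdot[\text{something}]$) that is forced to be inconsistent. So the real content beyond this excerpt — which the later sections supply — is (i) the corank-$\geq 2$ non-multiplicity-freeness theorem and (ii) the per-type obstruction invariants; the reduction to the irreducible case, by contrast, is essentially formal.
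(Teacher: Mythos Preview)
Your proposal is correct and matches the paper: Theorem~\ref{thm1} is not given a standalone proof but is declared to combine Theorem~\ref{red-thm} with the main results of Sections~\ref{a-sect}--\ref{e-sect}, and you cite exactly these ingredients. Your auxiliary sketches of how those cited theorems might be proved diverge in places from what the paper actually does (the converse of Theorem~\ref{red-thm} requires a projection argument since $W^+$ is not the product of the extended factor groups when isomorphic components admit swap automorphisms, and the $\D_{2n}$ obstruction in Theorem~\ref{d-thm} hinges on the impossibility of capturing both degenerate characters $\chi^{[\nu,\pm]}$ rather than on any parity argument), but these sketches are not part of your proof of Theorem~\ref{thm1} proper.
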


An \defn{involution model} for a finite group $G$ 
is a model $\{ \lambda_i : H_i \to \CC\}$ in which the subgroups $H_i$ range over the centralizers of the distinct 
conjugacy classes of involutions $g=g^{-1} \in G$.
Such models are natural to consider when $G$ has all real representations, since then
the Frobenius-Schur involution counting theorem asserts that $\sum_{\psi \in \Irr(G)} \psi(1) = |\{ g\in G :g=g^{-1}\}|$.

Involution models for finite Coxeter groups were studied and classified in \cite{Baddeley,BaddeleyThesis,IRS,Vinroot}.
Comparing Theorem~\ref{thm1} with the main result in \cite{Vinroot} gives the following corollary.

\begin{corollary}
A finite Coxeter group has a perfect model if and only if it has an involution model.
\end{corollary}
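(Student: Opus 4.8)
The plan is to deduce the corollary directly from Theorem~\ref{thm1} together with the classification of involution models for finite Coxeter groups carried out by Baddeley and Vinroot. First I would recall that the property of possessing an involution model is multiplicative over direct factors in exactly the same sense as in Theorem~\ref{thm1}: if $G = G_1 \times G_2$ then involutions in $G$ are pairs of involutions, centralizers split as products, and a model for $G$ is obtained by taking external products of models for $G_1$ and $G_2$; conversely Vinroot's analysis shows an involution model for a reducible Coxeter group forces one on each irreducible factor. Thus it suffices to compare the two lists in the irreducible case.

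Next I would quote the classification of irreducible finite Coxeter groups admitting an involution model. By the results in \cite{Baddeley,BaddeleyThesis,IRS,Vinroot}, these are precisely the groups of types $\A_{n-1}$, $\BC_n$, $\D_{2n+1}$, $\mathsf{H}_3$, and $\mathsf{I}_2(m)$ (dihedral groups always have involution models, since they have real representations and one checks the Frobenius--Schur count is realized by centralizers of reflections together with the centralizer of the longest element). This is word-for-word the list appearing in Theorem~\ref{thm1}, with $\mathsf{I}_2(n+1)$ ranging over all dihedral types. Hence an irreducible finite Coxeter group has a perfect model if and only if it has an involution model, and by the multiplicativity in the previous paragraph the same equivalence propagates to arbitrary finite Coxeter groups.

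I should be careful about two bookkeeping points. First, the dihedral range: Theorem~\ref{thm1} writes $\mathsf{I}_2(n+1)$ for $n \geq 2$, which a priori omits $\mathsf{I}_2(3) = \A_2$ and $\mathsf{I}_2(4) = \BC_2$ and $\mathsf{I}_2(5)$ and $\mathsf{I}_2(6)$; but these coincide with types already on both lists ($\A_2$, $\BC_2$, $\mathsf{H}_2 = \mathsf{I}_2(5)$ is dihedral hence has an involution model and, one checks, a perfect model, and $\mathsf{I}_2(6) = \mathsf{G}_2$), so the union of types on each side is genuinely the same. Second, I must confirm that types $\D_{2n}$ (even), $\mathsf{E}_6, \mathsf{E}_7, \mathsf{E}_8$, $\mathsf{F}_4$, and $\mathsf{H}_4$ lack involution models, which is exactly the content of Vinroot's theorem, matching their absence from the perfect-model list in Theorem~\ref{thm1}. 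The main obstacle here is not mathematical depth but rather accurately citing and aligning the literature: ensuring that the ``involution model'' list in \cite{Vinroot} uses the same normalization of dihedral and low-rank types as our Theorem~\ref{thm1}, and noting that existence of an involution model for a finite Coxeter group is automatically compatible with direct products in the way required. Once these are pinned down, the corollary follows by simply observing that the two lists of irreducible types agree.
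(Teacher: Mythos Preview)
Your approach is correct and essentially identical to the paper's, which simply remarks that the corollary follows by comparing Theorem~\ref{thm1} with the main result of \cite{Vinroot} and explicitly notes that no case-free argument is known. One minor slip in your bookkeeping: the range $\mathsf{I}_2(n+1)$ for $n\geq 2$ does \emph{not} omit $\mathsf{I}_2(3),\mathsf{I}_2(4),\mathsf{I}_2(5),\mathsf{I}_2(6)$ as you claim---setting $n=2,3,4,5$ produces exactly those---so that paragraph is unnecessary (though harmless, since your conclusion that the lists coincide is unaffected).
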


We do not know of an explanation 
for this phenomenon that avoids appealing to the case-by-case classification of both kinds of models.
More general kind of involution models for complex reflection groups have been studied and classified in \cite{Caselli2009,CF,CM,M2011,M2012}.
It would be interesting to know if the notion of a perfect model can be extended to that context.

Besides settling existence questions, our results also establish some uniqueness properties of perfect models.
A finite Coxeter group may have many different perfect models, each producing different Gelfand $W$-graphs.
We can show, however, that these $W$-graphs are all isomorphic after possibly ignoring edge labels and reversing edge orientations. We do this 
by studying a form on equivalence for perfect models introduced in \cite{MZ2020}; see Section~\ref{model-equiv-sect} for the  definition.
Perfect models that are equivalent give rise to essentially the same $W$-graphs, in a way that will be made precise below.

In  \cite{MZ2020} we described the Gelfand $W$-graphs associated to specific perfect models
for each classical Weyl group, excluding type $\D_{2n}$. 
The following result (combining Theorems~\ref{a-thm}, \ref{b-thm} and \ref{d-thm} and Proposition~\ref{i2-prop})
gives a sense in which these Gelfand $W$-graphs are canonical.

\begin{theorem}\label{equiv-thm}
If $W$ is an irreducible finite Coxeter group not of type $\A_3$, $\BC_n$, or $\mathsf{H}_3$
then $W$ has at most one equivalence class of perfect models.
\end{theorem}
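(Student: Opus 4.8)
The statement is a uniqueness assertion, and the natural approach is case-by-case along the classification in Theorem~\ref{thm1}: the irreducible finite Coxeter groups with a perfect model are those of type $\A_{n-1}$, $\BC_n$, $\D_{2n+1}$, $\mathsf{H}_3$, or $\mathsf{I}_2(n+1)$, so I only need to establish uniqueness (up to the equivalence of Section~\ref{model-equiv-sect}) for $\A_{n-1}$ with $n\neq 4$, for $\D_{2n+1}$, for $\mathsf{I}_2(n+1)$, and for the small exceptional $\mathsf{H}_3$ which is explicitly excluded. The first step is to recall the combinatorial shape of a perfect model: it is a set $\sP$ of model triples $\TT=(J,\cK,\sigma)$ with $\sum_{\TT\in\sP}\chi^\TT=\sum_{\psi\in\Irr(W)}\psi$, and the equivalence relation identifies perfect models that differ by the action of $\Aut(W,S)$ on triples and by swapping a triple for another with the same character in a controlled way. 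So the proof reduces to: (i) enumerate the possible model triples $\TT$ for each $W$, i.e. classify the subsets $J\subseteq S$, the $W_J$-classes $\cK\subseteq\sI_J$ of perfect involutions, and the linear characters $\sigma$; (ii) compute $\chi^\TT$ for each; and (iii) show that the only way to write $\sum_\psi\psi$ as a sum of such $\chi^\TT$ is, up to equivalence, the one exhibited in \cite{MZ2020} (for the classical types) or directly (for $\mathsf{I}_2$).

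**Key steps in order.** First I would invoke the corank bound: by the technical result advertised in the abstract (induction from a standard parabolic of corank $\geq 2$ is never multiplicity-free), any triple appearing in a perfect model must have $J$ of corank $0$ or $1$ in $S$ — otherwise $\chi^\TT$ would contain a repeated constituent, forcing some other $\chi^{\TT'}$ to have a negative coefficient, impossible. This drastically limits the triples. Second, for the corank-$\leq 1$ parabolics $W_J$ in type $\A$, $\D$, $\mathsf{I}_2$, I would enumerate the perfect-involution classes $\cK\subseteq\sI_J$ — using the description of $\sI$ via $\Aut(W_J,J)$ and the Rains--Vazirani quasiparabolic structure recalled in Section~\ref{qp-sect} — and compute the restricted-then-induced character $\chi^\TT=\Ind_{W_J}^W\Res_{C_J(z)}^{W_J}(\sigma)$ using the branching rules for these groups; in type $\A$ this is just the classical Gelfand--model decomposition (sums of irreducibles indexed by partitions with a fixed number of odd columns, as in \cite{APR2007,IRS}), and the combinatorial bookkeeping shows these character summands are ``almost linearly independent,'' pinning down the model up to $\Aut(W,S)$ (which is trivial for $\A_{n-1}$, $n\neq 2$, and order two for $\A_3$, hence the exclusion). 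Third, for $\D_{2n+1}$ and $\mathsf{I}_2(n+1)$ I would run the same argument: the parabolic subgroups of corank $\leq 1$ are products of smaller type-$\A$ (and type-$\D$ or $\mathsf{I}_2$) pieces whose Gelfand models are already understood, and a direct character count — there are relatively few constituents in $\mathsf{I}_2(n+1)$ — forces uniqueness. Finally, I would note that the excluded cases $\A_3$, $\BC_n$, $\mathsf{H}_3$ are exactly those where the enumeration genuinely produces inequivalent models (in $\A_3$ because of the extra graph automorphism, in $\BC_n$ because of the sign-twisting freedom, in $\mathsf{H}_3$ by explicit computation), so the theorem is sharp.

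**The main obstacle.** I expect the hard part to be step~(iii) in type $\A_{n-1}$ and $\D_{2n+1}$: showing that the decomposition $\sum_\psi\psi=\sum_\TT\chi^\TT$ has a \emph{unique} solution up to equivalence, not merely that the known model works. Concretely, I must rule out ``exotic'' combinations where several model triples with different $J$ or different $\sigma$ conspire to produce the same total character as the standard model without being equivalent to it. The cleanest route is to track a numerical invariant of each $\chi^\TT$ that is additive and that the standard model realizes extremally — for instance the multiset of dimensions, or the values on a well-chosen conjugacy class, or the ``number of odd columns'' grading in type $\A$ — and argue that any perfect model must match the standard one term by term on this invariant, which then forces the triples themselves to coincide up to $\Aut(W,S)$. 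Getting this rigidity argument to cover type $\D_{2n+1}$ uniformly (where the involution-class and character bookkeeping is messier than in type $\A$, and where one must carefully exploit that $2n+1$ is odd) is where most of the real work will lie; the $\mathsf{I}_2$ and small-rank cases should then be quick finite checks.
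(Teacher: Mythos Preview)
Your overall strategy matches the paper: case-by-case through the classification, use the corank bound (Theorem~\ref{tech1-thm}) to restrict to $|S\setminus J|\leq 1$, enumerate the resulting triples, compute $\chi^\TT$, and argue rigidity. Two points deserve correction or elaboration.

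\textbf{The $\A_3$ exclusion.} Your explanation is wrong: $\Aut(S_n,S)$ has order $2$ (the diagram flip) for \emph{every} $n\geq 3$, not just $n=4$, so there is no ``extra graph automorphism'' special to $\A_3$. The actual reason $\A_3$ admits an inequivalent perfect model is the exceptional isomorphism $S_4\cong\WD_3$, which imports a model from the type-$\D$ family (see Example~\ref{a-extra-ex}). Relatedly, your account of $\BC_n$ is off: the second model $\hat\cP^{\BC}_n$ is a refinement of $\cP^{\BC}_n$ (one triple splits into two), not a sign twist.

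\textbf{Non-factorizable triples.} Restricting to corank $\leq 1$ is not enough to make the enumeration manageable. In type $\A_{2m-1}$ with $J=S\setminus\{s_m\}$ one has $W_J\cong S_m\times S_m$, and $\Aut(W_J,J)$ contains an involution swapping the two factors; this yields perfect involutions $(1,\theta)\in\sI_J$ and hence model triples that your ``model index'' bookkeeping does not see. The paper handles these via Theorem~\ref{tech2-thm}, proving directly that such non-factorizable triples never have multiplicity-free character. You will need this (or an ad hoc substitute) before the enumeration is complete.

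Finally, two efficiencies from the paper worth noting: the rigidity invariant you are looking for in type $\A$ is precisely the number of odd parts of a partition (the standard model triple $\Theta_k$ is supported on partitions with exactly $n-2k$ odd parts, and the argument in the proof of Theorem~\ref{a-thm} pivots on which triples can cover partitions containing $(3,2,1)$); and for type $\D_{2n+1}$ the paper avoids redoing the enumeration by defining a projection $\pi_\D$ from $\WD_n$-characters to $S_n$-characters that sends any perfect model for $\WD_n$ to one for $S_n$, so the type-$\A$ uniqueness does most of the work.
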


If $W$ is of type $\BC_n$ for $n\neq 3$ then there are exactly two equivalence classes of perfect models,
one of which is a trivial ``refinement'' of the other; see Theorem~\ref{b-thm}.
There are a few additional models when $W$ is of type $\A_3$, $\BC_3$, or $\mathsf{H_3}$;
see Examples~\ref{a-extra-ex} and \ref{b3-ex} and Proposition~\ref{h3-prop}.

Our proofs rely on the following property which may be of independent interest.
 This theorem extends results in \cite{APVM,Anderson} which address the case when $\chi=\one$ is the trivial character:

\begin{theorem}\label{tech1-thm}
If $(W,S)$ is an irreducible finite Coxeter system and $J\subseteq S$ has $|S - J| \geq 2$,
then $\Ind_{W_J}^W(\chi)$ is not multiplicity-free for any irreducible character $\chi \in \Irr(W_J)$.
\end{theorem}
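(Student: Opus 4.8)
The plan is to reduce the general statement to a finite list of small "base cases" that can be checked by hand or by a short structural argument, using the classification of irreducible finite Coxeter systems together with the known trivial-character cases from \cite{APVM,Anderson}. First I would observe that it suffices to treat the case $|S-J| = 2$: if $|S-J| \geq 2$, pick $J \subseteq J' \subseteq S$ with $|S - J'| = 2$, and note that $\Ind_{W_J}^W(\chi) = \Ind_{W_{J'}}^W\bigl(\Ind_{W_J}^{W_{J'}}(\chi)\bigr)$; since induction is additive, if $\Ind_{W_{J'}}^W$ fails to be multiplicity-free on \emph{every} irreducible constituent of $\Ind_{W_J}^{W_{J'}}(\chi)$ then the composite is not multiplicity-free either. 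So the crux is: for $|S-J|=2$ and any $\chi \in \Irr(W_J)$, the induced character $\Ind_{W_J}^W(\chi)$ has a repeated constituent.

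Next I would use Frobenius reciprocity to rephrase multiplicity-freeness: $\Ind_{W_J}^W(\chi)$ is multiplicity-free iff $\sum_{\psi \in \Irr(W)} \langle \Res^W_{W_J}\psi, \chi\rangle^2 = \langle \Ind_{W_J}^W(\chi), \Ind_{W_J}^W(\chi)\rangle \leq \ldots$ — more usefully, $\langle \Ind_{W_J}^W(\chi),\Ind_{W_J}^W(\chi)\rangle = \langle \Res^W_{W_J}\Ind_{W_J}^W(\chi),\chi\rangle$, which by Mackey's formula equals $\sum_{d} \langle \chi, {}^d\chi \rangle_{W_J \cap {}^d W_J}$ summed over double coset representatives $d \in W_J \backslash W / W_J$. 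The $d = 1$ term contributes $1$; the claim is equivalent to showing that the remaining terms contribute at least $1$ in total, i.e. there is some nontrivial double coset $W_J d W_J$ along which $\chi$ and ${}^d\chi$ share a common constituent when restricted to $W_J \cap {}^dW_J$. A natural candidate is to take $d$ to be a minimal-length element of a double coset for which $W_J \cap {}^d W_J = W_K$ for a large parabolic $W_K$ (corank-one or corank-two inside $W_J$); one then needs $\Res^{W_J}_{W_K}\chi$ to share a constituent with ${}^d\bigl(\Res^{W_J}_{W_K}\chi\bigr)$. Since restriction of any irreducible character of a Coxeter group to a corank-one parabolic is never zero and tends to have overlapping support under the small "twist" coming from $d$, this should generically work; the trivial-character results of \cite{APVM,Anderson} handle $\chi = \one$ and suggest the mechanism.

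The main obstacle, and where the real work lies, is organizing this into a uniform argument across all Dynkin types without an unbounded case analysis. I would structure it as: (1) an inductive step on the rank of $W$ — if $(W,S)$ is irreducible of rank $\geq$ some bound and $|S-J|=2$, then $S-J$ either disconnects nothing (so $W_J$ is irreducible of corank $2$) or $J$ itself decomposes, and in the latter case one of the irreducible factors of $W_J$ sits inside a parabolic of corank $1$ inside a \emph{smaller} irreducible parabolic of $W$, letting one invoke the corank-one fact plus induction; (2) for $W_J$ irreducible of corank two, degenerate the problem to the rank-$3$ and rank-$4$ "seeds" (e.g. $\A_3 \subseteq \A_5$-type patterns, $\A_2 \subseteq \mathsf{H}_4$, $\A_1 \times \A_1 \subseteq \BC_4$, etc.) by another inductive comparison of character-degree sums, using that $\sum_{\psi \in \Irr(W)}\psi(1) < \bigl(\sum_{\psi}\psi(1)\bigr)$-type inequalities force multiplicity. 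Concretely I expect to lean on the degree-sum inequality: if $\Ind_{W_J}^W(\chi)$ were multiplicity-free then its degree $[W:W_J]\chi(1)$ is a sum of \emph{distinct} irreducible degrees of $W$, hence is at most $\sum_{\psi \in \Irr(W)}\psi(1)$; when $|S-J|\geq 2$ the index $[W:W_J]$ grows fast enough that even for $\chi$ of small degree this bound is violated for all but finitely many $(W,J)$, reducing everything to a finite computer-assisted or hand check. (3) Dispatch the finitely many remaining pairs $(W,J,\chi)$ directly, exhibiting a repeated constituent via character values or known decomposition tables. The delicate part is getting the degree-sum comparison sharp enough that the residual finite list is genuinely small and each member admits a clean verification; I would expect the exceptional types $\mathsf{H}_3, \mathsf{H}_4, \mathsf{F}_4, \mathsf{E}_n$ and small classical ranks to require the most careful individual treatment.
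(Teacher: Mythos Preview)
Your reduction to $|S\setminus J|=2$ via transitivity of induction is correct and matches the paper. After that, however, the proposal does not deliver a working argument, and the gap is not merely organizational.

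The degree-sum inequality you propose is far too weak to reduce the problem to finitely many cases. Take $W=S_n$ with $J=\{s_1,\dots,s_{n-3}\}$ so that $W_J=S_{n-2}\times S_1\times S_1$ and $[W:W_J]=n(n-1)$. If $\chi=\one$ then $\Ind_{W_J}^W(\chi)(1)=n(n-1)$, whereas $\sum_{\psi\in\Irr(S_n)}\psi(1)$ equals the number of involutions in $S_n$, which grows much faster than $n(n-1)$. So the inequality $[W:W_J]\chi(1)\leq \sum_{\psi}\psi(1)$ is \emph{never} violated along this family, and the same phenomenon occurs for many other corank-two parabolics with small-degree $\chi$. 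There is no reduction to a finite list here. The Mackey reformulation is correct but you do not identify a specific double coset $d$ and prove the needed overlap; saying it ``should generically work'' is not an argument, and in fact choosing such $d$ uniformly across all types and all $\chi$ is essentially as hard as the theorem itself.

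The paper takes a completely different route. For the exceptional types it simply checks by computer. For the classical types it exploits the explicit parametrization of $\Irr(W_J)$ and $\Irr(W)$ by (bi)partitions: in type $\A$, the question becomes whether $\chi^\lambda\bullet_\A\chi^\mu\bullet_\A\chi^\nu$ is ever multiplicity-free for three nonempty partitions, and this is settled by a short case analysis using Stembridge's classification \cite{Stembridge} of multiplicity-free products of Schur functions together with the inequality $c^{\nu+(1^r)}_{\lambda(\mu+(1^r))}\geq c^\nu_{\lambda\mu}$. Types $\BC$ and $\D$ are then reduced to type $\A$ via the Littlewood--Richardson rules \eqref{LR-bc-eq}--\eqref{LR-ba-eq} for those groups. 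The key ingredient you are missing is this direct control over the branching coefficients; without it, neither the Mackey nor the degree-bound approach can be made to terminate.
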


Section~\ref{prelim-sect} contains some background and general 
results about perfect models. Sections~\ref{a-sect}, \ref{b-sect}, and \ref{d-sect}
classify the perfect models up to equivalence for each classical Weyl group.
Section~\ref{e-sect} briefly explains the perfect model classification for the remaining irreducible finite Coxeter groups.
Appendix~\ref{app-sect}, finally contains the proofs of Theorem~\ref{tech1-thm} and another technical result.

\subsection*{Acknowledgements}

This work was partially supported by grants ECS 26305218 and GRF 16306120
from the Hong Kong Research Grants Council.

\section{Preliminaries}\label{prelim-sect}

\subsection{Restriction and induction}

Suppose $H\subseteq G$ are finite groups. 
Write $\Irr(H)$ and $\Irr(G)$ for the corresponding sets of complex irreducible characters.
If $\psi : H \to \CC$ and $\chi : G \to \CC$ are class functions (that is, maps constant on conjugacy classes),
then we denote the restriction of $\chi$ to $H$ by $\Res_H^G(\chi)$
and the class function of $G$ induced from $\psi$ by $\Ind_H^G(\psi)$.
An explicit formula for the induced function is 
\be\label{ind-eq}
\Ind_H^G(\chi)(x) = \frac{1}{|H|} \sum_{\substack{g \in G \\ gxg^{-1} \in H}} \chi(gxg^{-1})\quad\text{for }x \in G.
\ee
Induction from $H$ to $G$ is the unique linear operation such that
$\langle \chi,\Ind_H^G(\psi)\rangle_G = \langle \Res_H^G(\chi),\psi\rangle_H$
for all $\chi \in \Irr(G)$ and $\psi \in \Irr(H)$, where 
$\langle \cdot,\cdot\rangle_G$ is the bilinear form on class functions of $G$  
relative to which $\Irr(G)$ is an orthonormal basis.

\subsection{Extended Coxeter groups}

Let $(W,S)$ be a finite Coxeter system with length function $\ell : W \to \NN =\{0,1,2,\dots\}$.
The group $W$ always has at least two linear characters, given by the trivial character 
$\one : w \mapsto 1$ and the sign character $\sgn : w \mapsto (-1)^{\ell(w)}$.

Define $W^+ = W \rtimes \Aut(W,S)$ as in the introduction. We extend $\ell$ to $W^+$ by setting $\ell(w,\theta) = \ell(w)$.
Besides identifying $w \in W$ 
with  $(w,\id) \in W^+$, we also identify 
each $\theta \in \Aut(W,S)$ with the element $(1,\theta) \in W^+$
and view $\Aut(W,S) \subseteq W^+$ as a subgroup in this way.
Every $\alpha \in \Aut(W,S)$ extends to an automorphism of $W^+$ by the formula
\[
\alpha : (w,\theta) \mapsto (\alpha(w),\alpha \theta  \alpha^{-1}) = (1,\alpha) (w,\theta) (1,\alpha)^{-1}.\]
Suppose $z=(w,\theta) \in W^+$. Then $z^{-1} = (\theta(w)^{-1}, \theta^{-1})$,
so $z^2=1$
 if and only if $\theta=\theta^{-1}$ and $w^{-1} = \theta(w)$.
The conjugation action of  $g \in W$ on $W^+$ is $gzg^{-1} = (g\cdot w\cdot \theta(g)^{-1},\theta)$.
We refer to the orbits of this $W$-action in the set $\sI$ of perfect involutions as \defn{perfect conjugacy classes}.

\subsection{Quasi-parabolic sets}\label{qp-sect}

Introduced by Rains and Vazirani in \cite{RV}, 
a \defn{quasi-parabolic $W$-set} is a set $X$ with a height function $\height : X \to \ZZ$ 
and a left $W$-action satisfying a short list of technical axioms.
 The motivating example is the set of distinguished coset representatives $W^J := \{ w \in W : \ell(sw) > \ell(w)\text{ for all }s \in J\}$
where $J \subseteq S$ and $\height = \ell$.
 The quasi-parabolic axioms ensure 
 that there are simple formulas for a module of the Iwahori-Hecke algebra of $W$ deforming the permutation representation of $W$ on $X$.
 
The set of perfect involutions $\sI$ in $W^+$ is an example of a quasi-parabolic $W$-set,
relative to the conjugation action of $W$ and the height function $\height(z) := \lfloor\frac{\ell(z)}{2}\rfloor$ \cite[\S4]{RV}.
This is perhaps the most interesting general construction of a quasiparabolic set that is not isomorphic to one of the ``parabolic'' examples $W^J$.
This fact has many consequences; we note here just one technical property.
An element $z \in W^+$ is \defn{$W$-minimal} if $\ell(szs) \geq \ell(z)$ for all $s \in S$. 
Because $\sI$ is quasi-parabolic, each perfect conjugacy class contains a unique $W$-minimal element
 \cite[Cor. 2.10]{RV}.
This element is also the unique minimal-length element in its class.

\subsection{Dual model triples}

Recall the notion of a \defn{(perfect) model triple} for $(W,S)$ from the introduction.
We do not distinguish between model triples $(J,\cK,\sigma_1)$ and $(J,\cK,\sigma_2)$
when $\Res^{W_J}_{C_J(z)} (\sigma_1) = \Res^{W_J}_{C_J(z)} (\sigma_2)$,
as these give rise to the same character via \eqref{chTT-eq}.

Given a subset $J\subseteq S$ let $w_J$ denote the longest element of $W_J$ and define $w_0 := w_S$.
For $w \in W$ let $\Ad(w) \in \Aut(W)$
denote the inner automorphism $x \mapsto wxw^{-1}$.
Then $\Ad(w_J) \in \Aut(W_J,J)$ and the element $w_J^+ := (w_J, \Ad(w_J))$ is a central involution in $W_J^+$,
so $w_J^+ \in \sI_J$. Let $w_0^+ := w_S^+$.
The \defn{dual} of a model triple $\TT = (J,\cK,\sigma)$  
is   $\TT^\vee := (J^\vee, \cK^\vee, \sigma^\vee)$ where 
\[
\ba 
J^\vee &:= \Ad(w_0)(J) = w_0 J w_0 ,\\
\cK^\vee &:= \Ad(w_0) \cdot w_J^+ \cdot \cK\cdot  \Ad(w_0) = \{ (w_0   x w_J  w_0, \Ad(w_0)  \Ad(w_J)  \theta  \Ad(w_0)) : (x,\theta) \in \cK\}, \\
\sigma^\vee &:=  \sigma\circ \Ad(w_0).
\ea
\]
Since $w_0$ and $w_J^+$ are involutions, it is easy to see that $(\TT^\vee)^\vee = \TT$.
It holds by \cite[Prop. 3.33]{MZ2020} that if $\TT$ is a model triple for $(W,S)$ then so is $\TT^\vee$ and $\chi^\TT = \chi^{\TT^\vee}$.

\subsection{Model equivalence}\label{model-equiv-sect}

Let  $\TT = (J,\cK,\sigma)$ be a model triple for $(W,S)$.
Given  $\alpha \in \Aut(W,S)$,
define \[\TT^\alpha := (\alpha^{-1}(J), \alpha^{-1}(\cK), \sigma\circ \alpha).\]
 As explained in \cite[\S3.5]{MZ2020}, this is also a model triple of $(W,S)$
with $\chi^{\TT^\alpha} =\chi^{\TT} \circ \alpha$.

Suppose  $\TT' = (J',\cK',\sigma')$ is another model triple for $W$.
We write $\TT \equiv \TT'$ if $J=J'$
and
it holds
that $C_{J}(z) = C_{J'}(z')$ and $\Res^{W_J}_{C_J(z)}(\sigma) = \Res^{W_J}_{C_J(z)}(\sigma')$
where $z \in \cK$ and $z' \in \cK'$
are the unique minimal-length elements in each $W_J$-conjugacy class.
In this case   $\chi^{\TT} = \chi^{\TT'}$.

Let $\sim$ denote the transitive closure of the relation on model triples
that has $\TT \sim \TT'$ when $\TT \equiv \TT'$ or $ \TT^\vee = \TT'$ or $\TT^\alpha = \TT'$
for an inner automorphism $\alpha \in \Aut(W,S) \cap \{ \Ad(w) : w \in W\}$.
When $\TT \sim \TT'$ we say that the model triples are \defn{strongly equivalent}.
The following is clear:

\begin{proposition}
Strongly equivalent model triples for $(W,S)$ have the same characters.
\end{proposition}

Finally write $\sgn : w \mapsto (-1)^{\ell(w)}$ for the sign character of $W$
and define $\overline \TT :=  (J,\cK, \sigma \sgn)$.
This is another model triple for $(W,S)$ with $\chi^{\overline\TT} =  \chi^{\TT} \sgn$.

We define $\approx$ to be the transitive closure of the relation on model triples for $(W,S)$
that has $\TT \approx \TT'$ whenever $\TT \sim  \TT'$, $\overline\TT=\TT'$,  or $ \TT^\alpha = \TT'$ for an outer automorphism $\alpha \in \Aut(W,S)$.
When $\TT \approx \TT'$ we say that the two model triples are \defn{equivalent}.
When $\sP$ and $\sP'$ are sets of model triples,
we write $\sP \approx \sP'$ and say that $\sP$ and $\sP'$
are \defn{equivalent}
if there is a bijection $\sP \to \sP'$ 
such that if $\TT\mapsto \TT'$ then $\TT \approx \TT'$.

Here is why this is an appropriate notion of equivalence.
 To each perfect model there is a pair of associated $W$-graphs
$\Upsilon^\m(\sP)$ and $ \Upsilon^\n(\sP)$ \cite{MZ2020}.
If $\sP$ and $\sP'$ are equivalent perfect models for $W$ then there is a canonical bijection between  
the sets of vertices in the unions
$\Upsilon^\m(\sP) \sqcup \Upsilon^\n(\sP)$  and $\Upsilon^\m(\sP') \sqcup \Upsilon^\n(\sP')$.
This bijection restricts on
each weakly-connected component of the underlying graphs
to a map that
 is either an isomorphism or an anti-isomorphism onto its image \cite[Cor. 3.35]{MZ2020}. 

\subsection{Factorizable model triples}

A character of a finite group is \defn{multiplicity-free} if it is a sum of distinct irreducible characters.
We say that a model triple $\TT$ is \defn{multiplicity-free}
if its character $\chi^\TT$ is multiplicity-free. All model triples appearing in a perfect model must have this property.

The \defn{Coxeter diagram} of $(W,S)$ is the graph with vertex set $S$ that has an edge 
between two elements $s,t \in S$
whenever $st\neq ts$; this edge is labeled by the order of the product $st \in W$.
The \defn{irreducible components} of $(W,S)$ are the subsystems $(W_J,J)$
where $ J  \subseteq S$
is the set of vertices in a connected component of the Coxeter diagram.
 A Coxeter system $(W,S)$ is \defn{irreducible} if it has exactly one irreducible component. 

%
%

For $z = (w,\theta) \in W^+$ let $\aut(z) := \theta$.
Suppose $\TT=(J,\cK,\sigma)$ is a model triple.
Then the set $\{ \aut(z) : z \in \cK\}$ has just one element,
which we denote by $\aut(\cK) \in \Aut(W_J,J)$.
We say that $\TT$ is \defn{factorizable} if $\aut(\cK)$ preserves each irreducible component of $(W_J,J)$.

\begin{theorem}
\label{tech2-thm}
 If $W$ is irreducible then every multiplicity-free model triple for $W$ is factorizable. 
\end{theorem}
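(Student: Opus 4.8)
The plan is to argue by contradiction: suppose $\TT = (J,\cK,\sigma)$ is a model triple for the irreducible Coxeter system $(W,S)$ such that $\alpha := \aut(\cK) \in \Aut(W_J,J)$ does \emph{not} preserve each irreducible component of $(W_J,J)$, and derive that $\chi^\TT = \Ind_{W_J}^W \Res^{W_J}_{C_J(z)}(\sigma)$ is not multiplicity-free. First I would reduce the question from $\chi^\TT$ to a statement purely about induction from $W_J$. Because $\chi^\TT$ is induced from a character of $W_J$ of the form $\Res^{W_J}_{C_J(z)}(\sigma)$, and multiplicity-freeness of $\Ind_{W_J}^W(\psi)$ forces multiplicity-freeness of $\Ind_{W_J}^W(\psi')$ for every irreducible constituent $\psi'$ of $\psi$ (by Frobenius reciprocity and positivity of all the relevant inner products), it suffices to show: if $\alpha$ nontrivially permutes the irreducible components of $W_J$, then some $\psi \in \Irr(W_J)$ appearing in the induced-from-centralizer character $\Res^{W_J}_{C_J(z)}(\sigma)$ already has $\Ind_{W_J}^W(\psi)$ not multiplicity-free. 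Here I would lean on Theorem~\ref{tech1-thm}: if $|S - J| \geq 2$, then $\Ind_{W_J}^W(\chi)$ is never multiplicity-free, so that case is immediate and we may assume $|S - J| \leq 1$, i.e.\ $J = S$ or $J = S \setminus \{s\}$ is a maximal parabolic.

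Next I would analyze the structure forced by $\alpha$ not preserving components. If $\alpha$ moves components of $W_J$, then $W_J$ has at least two irreducible components $W_{J_1}, W_{J_2}$ that $\alpha$ interchanges (or cycles through more), so in particular $W_J$ is \emph{reducible}; since $(W,S)$ is irreducible and $|S-J|\le 1$, the one missing generator $s$ must connect these components in the Coxeter diagram of $(W,S)$. This is a very rigid situation: deleting a single vertex $s$ from a connected Coxeter diagram and obtaining a disconnected remainder that moreover admits a diagram automorphism swapping two of the pieces. I would enumerate the possibilities — essentially $s$ is a cut vertex whose removal leaves two isomorphic branches — and note that the relevant component $W_{J_1} \times W_{J_2}$ (with $\alpha$ swapping the two isomorphic factors $W_{J_1} \cong W_{J_2}$) sits inside a chain that looks like $W_{J_1} \hs\text{---}\hs s \hs\text{---}\hs W_{J_2}$, possibly with more components of $W_J$ fixed by $\alpha$ sitting off to the side. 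The key local models to handle are types $\A$ (a path, cut at an interior node), and the branch nodes of $\D$, $\mathsf{E}$, which give a few small exceptional diagrams to check by hand.

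The main work — and the step I expect to be the principal obstacle — is showing that for such a $\psi \in \Irr(W_J)$ lying in the image of $\Res^{W_J}_{C_J(z)}(\sigma)$, with $\psi = \psi_1 \boxtimes \psi_2 \boxtimes (\text{rest})$ and $\alpha$ swapping $\psi_1 \leftrightarrow \psi_2$ (so in fact $\psi_2 \cong \psi_1 \circ \alpha$, forcing a "diagonal" constituent because $z$ is $\alpha$-twisted), the character $\Ind_{W_J}^W(\psi)$ has a repeated constituent. I would attack this by Mackey theory: decompose $\Res^W_{W_J}\Ind_{W_J}^W(\psi)$ over the double cosets $W_J \backslash W / W_J$ and compute $\langle \Ind_{W_J}^W(\psi), \Ind_{W_J}^W(\psi)\rangle_W = \sum_{d} \langle \psi, {}^d\psi\rangle_{W_J \cap {}^d W_J}$; the contribution from $d = 1$ is $1$, and I must exhibit \emph{another} double coset $d$ — produced from the element $s$ bridging the two swapped components, or a short word in the missing generators — whose term is positive, using that $\psi_1 \cong \psi_2$ as characters of the isomorphic factors makes ${}^d\psi$ and $\psi$ agree on the overlap. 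A clean way to package this: the bridging generator $s$ together with $W_{J_1}, W_{J_2}$ generates a parabolic of type $\A$ (or the small exceptional one), and inside that parabolic one can invoke the classical fact that $\Ind_{S_a \times S_a}^{S_{2a+1}}$ of a character $\psi_1 \boxtimes \psi_1$ with equal factors is not multiplicity-free (it contains $\psi_1 \square \psi_1$-type constituents with multiplicity $>1$) — essentially the Littlewood–Richardson / Pieri obstruction. Transitivity of induction then lifts the repeated constituent from the bridging parabolic up to all of $W$. The remaining effort is bookkeeping: confirming that $\psi$ of the required "$\alpha$-diagonal" shape genuinely occurs in $\Res^{W_J}_{C_J(z)}(\sigma)$ for the $W$-minimal $z \in \cK$ (using that $\aut(z) = \alpha$ swaps the components, so the centralizer $C_J(z)$ is itself a "diagonal-type" subgroup of $W_{J_1}\times W_{J_2}$, up to the fixed part), and checking the finite list of exceptional branch-node diagrams directly.
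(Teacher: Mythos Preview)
Your overall strategy --- reduce to $|S\setminus J|\leq 1$ via Theorem~\ref{tech1-thm}, enumerate the finitely many diagrams in which removing a single vertex leaves two isomorphic components that a Coxeter automorphism of $W_J$ can swap, and treat the one infinite family (type~$\A_{2n-1}$, $J=S\setminus\{s_n\}$, $W_J\cong S_n\times S_n$) by hand --- is exactly the paper's approach. The exceptional list is small (the paper checks $\BC_3$, $\D_4$, $\D_7$, $\mathsf{E}_6$, $\mathsf{H}_3$ by computer), so the substance is the type~$\A$ argument.

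There is, however, a genuine gap in your key step. You reduce to finding a single irreducible constituent $\psi=\chi^\lambda\boxtimes\chi^\lambda$ of $\Ind_{\Delta(S_n)}^{S_n\times S_n}(\sigma)$ for which $\Ind_{S_n\times S_n}^{S_{2n}}(\psi)=\chi^\lambda\bullet_\A\chi^\lambda$ is not multiplicity-free, and then invoke as a ``classical fact'' that $\chi^\lambda\bullet_\A\chi^\lambda$ is never multiplicity-free. (Your index $S_{2a+1}$ is also off by one: the bridging parabolic $\langle J_1,s,J_2\rangle$ is $S_{2a}$, not $S_{2a+1}$.) But this ``fact'' is false: by Stembridge's criterion, $\chi^\lambda\bullet_\A\chi^\lambda$ is multiplicity-free precisely when $\lambda$ is a rectangle. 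In particular for $n=2$ \emph{every} $\lambda\vdash 2$ is a rectangle, so each $\chi^\lambda\bullet_\A\chi^\lambda$ is multiplicity-free and your single-$\psi$ argument cannot succeed there.

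The paper avoids this by working with the full character $\Ind_{\Delta(S_n)}^{S_{2n}}(\sigma)=\sum_{\nu}\bigl(\sum_{\lambda\vdash n}c^\nu_{\lambda\lambda}\bigr)\chi^\nu$ (or with $c^\nu_{\lambda\lambda^\top}$ when $\sigma=\sgn$). For $n\geq 3$ one exhibits a single coefficient $c^{(n,n-1,1)}_{(n-1,1)(n-1,1)}=2$; for $n=2$ one instead uses that $c^{(2,2)}_{(2)(2)}=c^{(2,2)}_{(1,1)(1,1)}=1$, so the \emph{overlap} of two multiplicity-free summands produces multiplicity. Your framework can be salvaged the same way: for $n\geq 3$ pick $\psi_1=\chi^{(n-1,1)}$ (a non-rectangle), and for $n=2$ abandon the single-$\psi$ reduction and note that $\Ind(\chi^{(2)}\boxtimes\chi^{(2)})$ and $\Ind(\chi^{(1,1)}\boxtimes\chi^{(1,1)})$ share the constituent $\chi^{(2,2)}$.
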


We prove this result in Section~\ref{app-sect}.

\subsection{Models for reducible groups}\label{reducible-sect}

Let $(W,S)$ be a finite Coxeter system.
Suppose $L_1$, $L_2$, \dots, $L_k$ are disjoint, nonempty sets such that $S = L_1\sqcup L_2\sqcup \dots \sqcup L_k$
and every $s \in L_i$ commutes with every $t \in L_j$ for all $1 \leq i < j \leq k$.
Let $W_i = W_{L_i}$ for $i \in [k]$. 
The subsystems $(W_i,L_i)$ might be the irreducible factors of $(W,S)$, for example, or they might be larger subgroups.

Each automorphism of $W_i$ extends to an automorphism of $W$ fixing all elements of $W_{j}$ for $ i \neq j$,
so we may view $W_i^+\subseteq W^+$ 
and $\sI_i := \sI(W_i, L_i)\subseteq\sI=\sI(W,S)$.
Each $w \in W$ can be written uniquely as $w =w_1w_2\cdots w_k$ with $w_i \in W_{i}$,
so given functions $f_i: W_{i}\to \CC$  for $i \in [k]$  
we may define $f_1 \otimes f_2 \otimes \cdots\otimes f_k : W \to \CC$ by  $w \mapsto f_1(w_1)f_2(w_2)\cdots f_k(w_k)$.
This gives a bijection 
\[\Irr(W_{1}) \times \Irr(W_{2})\times \cdots \times \Irr(W_k) \to \Irr(W).\]
If $\TT_i = (J_i,\cK_i,\sigma_i)$
is a model triple for $(W_{i},L_i)$ for each $i \in [k]$,
then we define 
\[\TT_1 \otimes \TT_2\otimes\dots \otimes \TT_k := ( J, \cK, \sigma)\]
 where
 $
  J:=J_1 \sqcup J_2\sqcup \dots \sqcup J_k,
$ $ \cK  := \cK_1 \cK_2 \cdots \cK_k,
  $ and 
$  \sigma :=\sigma_1 \otimes \sigma_2\otimes \dots\otimes \sigma_k.
  $
Note that $\cK$ is a well-defined subset of $\sI(W_{J}, J)$, although the latter might not be a subset of  $ \sI \subseteq W^+$
if there are Coxeter automorphisms of $W_J$ that do not extend to $W$.

It is straightforward to see that if $\TT_i$ is a factorizable model triple for $(W_i,L_i)$ for each $i \in [k]$
then 
$\TT_1 \otimes \TT_2\otimes\dots \otimes \TT_k$ is a factorizable model triple for $(W,S)$
with  $\chi^{\TT_1 \otimes \TT_2\otimes\dots \otimes \TT_k} = \chi^{\TT_1}\otimes \chi^{\TT_2}\otimes \cdots \otimes \chi^{\TT_k}$. Every factorizable model triple for $(W,S)$ arises in this way.

\begin{theorem}\label{red-thm}
A finite Coxeter system $(W,S)$ has a perfect model if and only if each of its irreducible factors has a perfect model.
\end{theorem}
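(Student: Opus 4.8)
The plan is to prove both directions by exploiting the tensor decomposition of $\Irr(W)$ and the behavior of model triples under direct products described in Section~\ref{reducible-sect}. Let $(W_1,L_1),\dots,(W_k,L_k)$ be the irreducible factors of $(W,S)$, so $S = L_1 \sqcup \dots \sqcup L_k$ with the $L_i$ mutually commuting, $W = W_1 \times \cdots \times W_k$, and $\Irr(W) = \Irr(W_1)\times\cdots\times\Irr(W_k)$ via the tensor product map. For the easy direction, suppose each $(W_i,L_i)$ has a perfect model $\sP_i$. Each triple in $\sP_i$ is factorizable (it is automatically a model triple for the irreducible $W_i$, and every model triple appearing in a perfect model is multiplicity-free, hence factorizable by Theorem~\ref{tech2-thm}). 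I would then form the set $\sP := \{ \TT_1 \otimes \cdots \otimes \TT_k : \TT_i \in \sP_i \}$ and compute, using the displayed multiplicativity $\chi^{\TT_1\otimes\cdots\otimes\TT_k} = \chi^{\TT_1}\otimes\cdots\otimes\chi^{\TT_k}$ from Section~\ref{reducible-sect}, that
\[
\sum_{\TT\in\sP}\chi^\TT = \sum_{\TT_1\in\sP_1}\cdots\sum_{\TT_k\in\sP_k} \chi^{\TT_1}\otimes\cdots\otimes\chi^{\TT_k} = \Bigl(\sum_{\TT_1\in\sP_1}\chi^{\TT_1}\Bigr)\otimes\cdots\otimes\Bigl(\sum_{\TT_k\in\sP_k}\chi^{\TT_k}\Bigr) = \bigotimes_{i=1}^k \sum_{\psi_i\in\Irr(W_i)}\psi_i,
\]
and this last expression is exactly $\sum_{\psi\in\Irr(W)}\psi$ under the bijection $\Irr(W_1)\times\cdots\times\Irr(W_k)\to\Irr(W)$. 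Hence $\sP$ is a perfect model for $W$.

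For the converse, suppose $W$ has a perfect model $\sP$. The subtlety is that a model triple $\TT=(J,\cK,\sigma)$ for $(W,S)$ need not be ``split'' along the factors $W_i$ on the nose, because $J \subseteq S$ can cut across the $L_i$ and the automorphism $\aut(\cK)$ might permute irreducible components of $(W_J,J)$. However, each $\TT\in\sP$ is multiplicity-free, and here is where I would invoke Theorem~\ref{tech2-thm}: I first need to reduce to the case where $W$ is irreducible, or rather argue componentwise. Since $W_J \subseteq W = W_1\times\cdots\times W_k$ and each irreducible component of $(W_J,J)$ lies inside a unique $W_i$, writing $J = J_1\sqcup\cdots\sqcup J_k$ with $J_i = J\cap L_i$ gives $W_J = W_{J_1}\times\cdots\times W_{J_k}$ and $\Irr(W_J)$ tensor-factors accordingly. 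The key point is that $\chi^\TT = \Ind_{W_J}^W \Res^{W_J}_{C_J(z)}(\sigma)$ being multiplicity-free forces, after restricting to each factor $W_i$, that the corresponding ``piece'' of $\TT$ is a multiplicity-free model triple for $(W_i, L_i)$ — one uses that induction commutes with direct products, i.e. $\Ind_{W_{J_1}\times\cdots\times W_{J_k}}^{W_1\times\cdots\times W_k} = \Ind_{W_{J_1}}^{W_1}\otimes\cdots\otimes\Ind_{W_{J_k}}^{W_k}$, together with the fact that a tensor product of characters is multiplicity-free if and only if each tensor factor is multiplicity-free. Combined with factorizability (Theorem~\ref{tech2-thm} applied to each $W_i$, or its consequence for the possibly-reducible ambient group stated at the end of Section~\ref{reducible-sect}), every $\TT\in\sP$ can be written uniquely as $\TT = \TT_1^{(\TT)}\otimes\cdots\otimes\TT_k^{(\TT)}$ with $\TT_i^{(\TT)}$ a factorizable model triple for $(W_i,L_i)$.

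Having decomposed each triple, I would then project: fix an index $i$ and for each choice of triples $\TT_j^{(\TT)}$ in the other factors, collect the $\TT_i^{(\TT)}$. The identity $\sum_{\TT\in\sP}\chi^{\TT_1^{(\TT)}}\otimes\cdots\otimes\chi^{\TT_k^{(\TT)}} = \bigotimes_i\sum_{\psi_i}\psi_i$ and the linear independence of the tensor-product irreducibles then lets me argue that the multiset $\{\TT_i^{(\TT)} : \TT \in \sP\}$, suitably interpreted, yields a perfect model for $(W_i,L_i)$. Concretely: the sum $\sum_{\TT\in\sP}\chi^\TT$, expanded in the basis $\Irr(W)$, has all coefficients $1$; matching coefficients of $\psi_1\otimes\cdots\otimes\psi_k$ and summing over all $\psi_j$ ($j\neq i$) with fixed $\psi_i$, one finds that the characters $\chi^{\TT_i^{(\TT)}}$, as $\TT$ ranges over a transversal that is constant in the $j\neq i$ coordinates, sum to $\sum_{\psi_i\in\Irr(W_i)}\psi_i$; a counting/cancellation argument (or induction on $k$, peeling off one factor at a time by grouping $W_1\times\cdots\times W_{k-1}$ versus $W_k$) then produces the desired perfect model for each factor. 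The main obstacle I anticipate is precisely this last bookkeeping step — verifying that the collection of $i$-th components genuinely assembles into a single perfect model rather than merely summing correctly ``on average'' — but handling it by induction on $k$, reducing to the two-factor case $W = W' \times W''$ where one matches $\sum_{\TT\in\sP}\chi^{\TT'}\otimes\chi^{\TT''} = (\sum_{\psi'}\psi')\otimes(\sum_{\psi''}\psi'')$ and uses that $\{\psi'\otimes\psi''\}$ is a basis, should make it routine. Everything else is a direct application of the multiplicativity of $\chi^{(-)}$, Theorem~\ref{tech2-thm}, and the transitivity of induction.
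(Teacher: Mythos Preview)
Your easy direction is fine and matches the paper. The gap is in the converse, specifically in the sentence ``Combined with factorizability (Theorem~\ref{tech2-thm} applied to each $W_i$ \dots), every $\TT\in\sP$ can be written uniquely as $\TT = \TT_1^{(\TT)}\otimes\cdots\otimes\TT_k^{(\TT)}$.'' Theorem~\ref{tech2-thm} has the hypothesis that $W$ is \emph{irreducible}; you are applying it to a model triple for the reducible group $W$, and there is no way to ``apply it to each $W_i$'' before you have extracted a model triple for $W_i$, which is precisely what you are trying to do. The statement at the end of Section~\ref{reducible-sect} that you cite only says that factorizable triples decompose; it says nothing about multiplicity-free triples for reducible $W$ being factorizable.

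And in fact they need not be. Take $W=S_2\times S_2$ with generators $a,b$, let $\theta\in\Aut(W,S)$ swap $a\leftrightarrow b$, and let $\cK$ be the $W$-conjugacy class of $(1,\theta)$. One checks $(1,\theta)$ is perfect, $C_S((1,\theta))=\{1,ab\}$, and $\Ind_{\{1,ab\}}^{W}(\one)=\one\otimes\one+\sgn\otimes\sgn$, which is multiplicity-free but does not factor as a tensor product of characters of the two copies of $S_2$. Pairing this with the analogous triple for the nontrivial character of $\{1,ab\}$ gives a perfect model for $S_2\times S_2$ consisting entirely of non-factorizable triples. So your decomposition step fails already in the smallest nontrivial example.

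The paper's proof confronts exactly this phenomenon. Fixing one irreducible factor $W_{S'}$ and writing $S''=S\setminus S'$, it decomposes $J=A\sqcup B\sqcup C\sqcup D$ with $A,B\subseteq S'$, $C,D\subseteq S''$, and $\theta=\aut(\cK)$ fixing $A,D$ while swapping $B\leftrightarrow C$ across the boundary. It then computes $\chi^\TT$ explicitly in this mixed situation and applies the linear projection $\cL:\chi\otimes\psi\mapsto \chi\cdot[\psi=\one]$ to produce, from each $\TT$, a genuine model triple for $W_{S'}$. The point is that the ``$B\leftrightarrow C$'' contribution to $\chi^\TT$ becomes $\sum_{\psi\in\Irr(W_B)}\sigma_B\psi\otimes\psi\circ\theta$, and only the $\psi=\one$ term survives the projection, collapsing the cross-factor twist. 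Your tensor-and-project idea is morally right, but it must be carried out at the level of characters, not at the level of model triples.
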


\begin{proof}
If each irreducible factor of $(W,S)$ has a perfect model then a perfect model for $W$ is obtained
by tensoring together the corresponding model triples. 

Suppose instead that $(W,S)$ is a reducible Coxeter system with a perfect model.
Then there exists a nonempty subset $S'\subsetneq S$ such that $(W_{S'},S')$ is irreducible.
Let $S'' := S\setminus S'$.
We will show that $W_{S'}$ also has a perfect model. This is nontrivial primarily because 
although $W=W_{S'}\times W_{S''}$, the extended group $W^+$
is not always isomorphic to $W^+_{S'}\times W^+_{S''}$.

Suppose $\TT=(J, \cK, \sigma)$ is a model triple for $W$.
Let  $\theta := \aut(\cK) \in \Aut(W_J,J)$. Then
we can express $J = A\sqcup B \sqcup C\sqcup D$ for disjoint subsets $A,B\subseteq S'$
and $C,D\subseteq S''$ with $\theta(A) = A$, $\theta(B) = C$, $\theta(C) = B$, and $\theta(D) =D$.

In this setup $(W_B,B) \cong (W_C,C)$
and all elements $a \in A$, $b \in B$, $c \in C$, and $d \in D$ must pairwise commute.
 Additionally, the minimal-length element of $\cK$ 
must have the form $z_{\min} := (z_Az_D,\theta)$ for some $z_A \in W_A$ and $z_D \in W_D$.
 Let $\theta_A = \theta|_{W_A}$ and $\theta_D = \theta|_{W_D}$.
Then the centralizer of $z_{\min}$ in $W_{J}=W_{A}\times W_{B} \times W_{C} \times W_D$ is 
\[H := C_{W_A}((z_A,\theta_A)) \times \Delta_\theta(W_B \times W_C)  \times  C_{W_D}((z_D,\tau_D))
\]
where
$ \Delta_\theta(W_B \times W_C) := \{ b\cdot\theta(b) : b \in W_B\} \subseteq W_B\times W_C$.
Define  
\[
\sigma_A := \Res^{W_J}_{W_A}(\sigma),\quad
\sigma_D := \Res_{W_D}^{W_J}(\sigma),\quand
\sigma_B (b) = \sigma(b \cdot \theta(b))\text{ for } b\in W_B.\]
Since all characters of finite Coxeter groups are real-valued, Frobenius reciprocity implies that
\[ \chi^\TT = \Ind_H^W \Res_H^{W_{J}}(\sigma) = 
\Ind_{W_J} ^W \Bigl( \chi^{\TT}_A \otimes \Bigl(\sum_{\psi \in \Irr(W_B)}  \sigma_B\psi  \otimes  \psi\circ\theta \Bigr) \otimes \chi^{\TT}_D\Bigr)
\]
where
$
\chi^{\TT}_A := \Ind_{C_{W_A}((z_A,\theta_A))}^{W_A} \Res^{W_A}_{C_{W_A}((z_A,\theta_A))}(\sigma_A)
$
and
$
\chi^{\TT}_D := \Ind_{C_{W_D}((z_D,\theta_D))}^{W_D} \Res^{W_D}_{C_{W_D}((z_D,\theta_D))}(\sigma_D).$
Since $W = W_{S' \sqcup S''} = W_{S'}\times W_{S''}$ we can rewrite this as
\[
\ba
\chi^{\TT} &
= \sum_{\psi \in \Irr(W_B)} \Ind_{W_{A}\times W_{B}}^{W_{S'}}  \( \chi^{\TT}_A \otimes \sigma_B \psi\)  \otimes
 \Ind_{W_{C}\times W_{D}}^{W_{S''}}  \(  \psi\circ \theta \otimes \chi^{\TT}_D\).
\ea
\]

A basis for the class functions on $W$ is given by the irreducible characters $ \chi \otimes \psi  $ for $ \chi \in \Irr(W_{S'})$ and $\psi \in \Irr(W_{S''})$. 
Let $\cL$ be the linear map from class functions on $W$ to class functions on $W_{S'}$ that sends
$ \chi \otimes \psi \mapsto \chi$ if $\psi =\one$ and to zero otherwise. Since for $\psi \in \Irr(B)$ we have
\[
\left\langle \one,    \Ind_{W_{C}\times W_{D}}^{W_{S''}}  \(  \psi\circ \theta \otimes \chi^{\TT}_D\)\right\rangle_{W_{S''}}
=
\left\langle \one,    \psi\circ \theta \otimes \chi^{\TT}_D\right\rangle_{W_C\times W_D}
= \begin{cases} 
\left\langle \one, \chi^{\TT}_D\right\rangle_{W_D} &\text{if $\psi= \one$} \\ 
  0&\text{if $\psi \neq \one$},\end{cases}
  \] it follows that
$ \cL(\chi^\TT) =\langle \one, \chi^{\TT}_D\rangle_{W_D}    \Ind_{W_{A}\times W_{B}}^{W_J}  \( \chi^{\TT}_A \otimes \sigma_B \) . $
Define $\cK'$ to be the $W_{A\sqcup B}$-conjugacy class of $(z_A, \theta'_A)$ where
$ \theta'_A : ab\mapsto \theta(a)b$ for $a \in W_A$ and $b \in W_B$.
Then
$
\Ind_{W_{A}\times W_{B}}^{W_J}  \( \chi^{\TT_A} \otimes \sigma_B \) 
$
is just the character of the model triple 
$ \TT' := (A \sqcup B, \cK', \sigma_A\otimes \sigma_B)$ for $(W_{S'},S')$.

Let $\sP$ be a perfect model for $W$. 
Then $\langle \one_{W_D}, \chi^{\TT}_D\rangle_{W_D} \in \{0,1\}$ for all $\TT \in \sP$ and 
$\sum_{\TT \in \sP} \chi^\TT = \sum_{\chi \in \Irr(W_{S'}) } \sum_{\psi \in \Irr(W_{S''})} \chi \otimes \psi.$
Define $\sP' = \{\TT' :  \TT \in \sP \text{ has } \langle \one_{W_D}, \chi^{\TT}_D\rangle_{W_D} = 1\}$. Then 
$
\sum_{\SS \in \sP'} \chi^\SS =  \sum_{\TT \in \sP} \cL(\chi^\TT) = \cL\(\sum_{\TT \in \sP} \chi^\TT\) = \sum_{\chi \in \Irr(W_{S'})} \chi
$
so $\sP'$ is a perfect model for $W_{S'}$.
\end{proof}

 \subsection{Classical Weyl groups}

By Theorem~\ref{red-thm} our main classification problem reduces to  understanding which irreducible finite Coxeter groups have perfect models---in particular those groups in the three infinite families of classical Weyl groups.

Let $(i,i+1) $ for $i \in \ZZ$ denote the permutation of $ \ZZ$ interchanging $i$ and $i+1$ while fixing all other points.
The group of permutations of $\ZZ$ with finite support is $S_\ZZ := \langle (i,i+1) : i \in \ZZ\rangle$.
We realize the  classical Weyl groups as subgroups of $S_\ZZ$ in the following way.
Define $s_0 := (-1,1)$. For integers $i>0$ let $s_i := (i,i+1)(-i,-i-1)$ and $s_{-i} := (i,-i-1)(-i,i+1)$.
For $n\geq 1$ set 
\[S_{n+1} := \langle s_1,s_2,\dots,s_{n} \rangle \quand \W_n := \langle s_0,s_1,s_2\dots,s_{n-1} \rangle.\]
For each $n\geq 2$
set \[ \WD_n := \langle s_{-1},s_1,s_2,\dots,s_{n-1}\rangle\quad\text{where}\quad S_1 = \WD_1 := \{ 1 \} \subseteq S_\ZZ.\]
These are the finite Coxeter groups of types $\A_{n-1}$ (for $n\geq 1$), $\BC_n$ (for $n\geq 2$), and $\D_n$ (for $n\geq 4$) relative to the given simple generators. 
Note that 
\[
\W_1 = \langle s_0\rangle\cong  S_2,
\quad 
\WD_2 = \langle s_{-1},s_1\rangle \cong S_2 \times S_2,
\quand \WD_3 = \langle s_{-1},s_2, s_1\rangle \cong S_4.
\]
The elements of $\W_n$ are the permutations $w \in S_\ZZ$  with $w(-i) = -w(i)$ for all $i \in [n] := \{1,2,\dots,n\}$ and with $w(i) =i$ if $|i| > n$.
The group $S_n$ is the subgroup of such permutations which preserve $[n]$, and the group $\WD_n$ is the subgroup
of elements $w \in W_n$ with an even number of sign changes, that is, with $|\{ i \in [n] : w(i) < 0\}|\equiv 0 \modu 2)$.

Assume $W \in \{ S_n, \W_n, \WD_n\}$ is one of these classical Weyl groups.
Each element $w \in W$ is uniquely determined by its \defn{one-line representation}, which is the word $w_1w_2\cdots w_n$ where $w_i = w(i)$ and where we write negative numbers $-1,-2,-3,\dots$ as $\bar 1,\bar 2,\bar 3,\dots$, respectively.
When $i \geq 0$ and $s_i \in W$, one has $\ell(ws_i) < \ell(w)$ for $w \in W$ if and only if $w_i > w_{i+1}$;
when $w \in W=\WD_n$ one has $\ell(ws_{-1}) < \ell(w)$ if and only if $-w_1 > w_2$ \cite[Props. 8.1.2 and 8.2.2]{CCG}.

%
%
%
%
%
%

\section{Model classification in type A}\label{a-sect}

In this section we fix a positive integer $n$ and consider the Coxeter group $W = S_n$ with generating set $S = \{s_1,s_2,\dots,s_{n-1}\}$. Our main result here is Theorem~\ref{a-thm}.

\subsection{Perfect conjugacy classes in type A}

The longest element in $S_n$ is the reverse permutation $w_0 = n\cdots 321$
and the only nontrivial Coxeter automorphism is $\Ad(w_0)$.
Let $\cK_{\id}^{S_n} := \{1\}$ and when $n$ is even define $\cK^{S_n}_{\fpf}$ to be the set of fixed-point-free involutions in $S_n$.
Let $\cK^{S_n}_{\id^+} :=  \{w_0^+ \}$  and  $\cK^{S_n}_{\fpf^+} :=  \cK^{S_n}_{\fpf}\cdot w_0^+$. 
The unique minimal-length elements
in
$\cK^{S_n}_{\fpf}$ and $\cK^{S_n}_{\fpf^+}$ are then
\[
s_1s_3s_5\cdots s_{n-1} \in S_n
\quand (1,\Ad(w_0)) \in S_n^+.\]
The perfect conjugacy classes in $S_n^+$ are $\cK^{S_n}_{\id}$ and $\cK^{S_n}_{\id^+}$,
together with $\cK^{S_n}_{\fpf}$ and $\cK^{S_n}_{\fpf^+}$ when $n$ is even \cite[Ex. 9.2]{RV}.
One has  $\cK^{S_1}_{\id} = \cK^{S_1}_{\id^+}$
and  $\cK^{S_2}_{\id} = \cK^{S_2}_{\fpf^+}$ and $\cK^{S_2}_{\id^+} = \cK^{S_2}_{\fpf}$.


\subsection{Model indices in type A}\label{mti-a-sect}


Let $\MTI(S_n)$ denote the set of 3-line arrays of the form
\[
\Theta = \left[\begin{smallmatrix} \alpha_1 & \alpha_2 & \dots & \alpha_l \\ 
\beta_1 & \beta_2 & \dots & \beta_l \\ 
\gamma_1 & \gamma_2 & \dots &\gamma_l
\end{smallmatrix}\right]
\]
where $\alpha_1,\alpha_2,\dots,\alpha_l$ are positive integers summing to $n$,
each $\beta_i$ is a symbol in  $\{\id, \id^+, \fpf, \fpf^+\}$, 
and each $\gamma_i \in \{\one, \sgn\}$ subject to 
the requirement that
 $\beta_i \in \{ \id, \id^+\}$
when $\alpha_i \in  \{1,3,5,7\dots\}$.
We refer to $\Theta $ as a \defn{model index} for $S_n$.
Let $\Theta = \left[\begin{smallmatrix} \alpha_1 & \alpha_2 & \dots & \alpha_l \\ 
\beta_1 & \beta_2 & \dots & \beta_l \\ 
\gamma_1 & \gamma_2 & \dots &\gamma_l
\end{smallmatrix}\right] \in \MTI(S_n)$. For  $i \in [l]$ define 
\[
\ba
J_i &:= \{ s_j \in S_n :  \alpha_1 + \dots + \alpha_{i-1} < j < \alpha_1 + \dots + \alpha_i \} \ea
 \]
and let $\varphi_i$ be the isomorphism $  S_{\alpha_i } \to \langle J_i\rangle$
 mapping $s_j \mapsto s_{\alpha_1 + \alpha_2 + \dots +\alpha_{i-1} + j}$ for $j \in [\alpha_i-1]$.
 This extends to an isomorphism $S_{\alpha_i}^+ \cong \langle J_i\rangle^+$ 
 via $(w,\theta) \mapsto (\varphi_i(w), \varphi_i \theta\varphi_i^{-1})$.
Let $\cK_i$ be the image of $\cK^{S_{\alpha_i}}_{\beta_i}$ under $\varphi_i$.
Using the notation in Section~\ref{reducible-sect}, we define a model triple  
\[ \TT^\Theta :=( J_1, \cK_1, \gamma_1) \otimes ( J_2, \cK_2, \gamma_2) \otimes \dots \otimes ( J_l, \cK_l, \gamma_l)
. \]
Every factorizable model triple (and therefore
every multiplicity-free model triple by Theorem~\ref{tech2-thm})
 for $S_n$ arises as $\TT^\Theta$ for some $\Theta \in \MTI(S_n)$.
This representation is almost unique. However, $\TT^\Theta$ is unaltered by the following modifications to $\Theta$:
\begin{itemize}
\item when $\alpha_i=1$, changing $\beta_i=\id$ to $\id^+$ (or vice versa) or $\gamma_i=\one$ to $\sgn$ (or vice versa);

\item when $\alpha_i=2$,  changing $\beta_i = \id$ to $ \fpf^+$ (or vice versa) or $\beta_i = \id^+$ to $ \fpf$ (or vice versa).
\end{itemize}
  In view of Theorem~\ref{tech2-thm},
 the character 
 \[\chi_\A^\Theta\ := \chi^{\TT^\Theta}\] is never multiplicity-free if $\Theta$ has more than two columns.
 However, it will be useful later to allow these more general indices.

Define 
$\Theta^\ast := \left[\begin{smallmatrix} \alpha_l & \dots  & \alpha_2& \alpha_1 \\ 
 \beta_l & \dots  & \beta_2 & \beta_1 \\ 
\gamma_l & \dots  &\gamma_2&\gamma_1
\end{smallmatrix}\right]$, 
$\Theta^\vee := \left[\begin{smallmatrix} \alpha_l & \dots  &\alpha_2& \alpha_1 \\ 
\beta^\vee_l & \dots  & \beta^\vee_2& \beta^\vee_1 \\ 
\gamma_l & \dots  & \gamma_2 &\gamma_1
\end{smallmatrix}\right]$,
and $\overline\Theta := \left[\begin{smallmatrix} \alpha_1 & \alpha_2  & \dots & \alpha_l \\ 
\beta_1 & \beta_2  & \dots & \beta_l \\ 
\bar\gamma_1 &\bar\gamma_2 & \dots &\bar\gamma_l
\end{smallmatrix}\right]$ 
where 
\[ \beta^\vee_i:=\begin{cases} \fpf &\text{if $\beta_i = \fpf^+$} \\
\fpf^+ &\text{if $\beta_i = \fpf$} \\
\id &\text{if $\beta_i = \id^+$} \\
  \id^+ &\text{if $\beta_i = \id$} \end{cases}
  \quand
  \bar \gamma_i:= \gamma_i \sgn = \begin{cases} \one &\text{if $\gamma_i = \sgn$} \\
\sgn &\text{if $\gamma_i = \one$.} \end{cases}\]
 It is straightforward to check 
 that 
 $\TT^{\Theta^\ast} = (\TT^\Theta)^{\Ad(w_0)}$,    
  $\TT^{\Theta^\vee} = (\TT^\Theta)^\vee$,
  and $\TT^{\overline\Theta} = \overline{\TT^\Theta}$.
 Likewise, if $ \Theta'$ is formed from $\Theta$ by changing any entries  $\id^+$ in the second row to $\id$,
 then $\TT^{\Theta} \equiv \TT^{\Theta'}$ since  $\cK^{S_{\alpha_i}}_{\id}$ and $\cK^{S_{\alpha_i}}_{\id^+}$ are singleton 
 sets with the same centralizers in $S_{\alpha_i}$.\footnote{
  If $\Theta''$ is formed from $\Theta$ 
 by changing any $\fpf^+$ entries    to $\fpf$, then $\chi_\A^\Theta = \chi_\A^{\Theta''}$ always holds
 but we could have $\TT^\Theta\not\equiv \TT^{\Theta''}$.
If $(J,\cK,\sigma) \equiv (J',\cK',\sigma')$ then $J=J'$ and 
 the $W_J$-centralizers of the minimal-length elements of $\cK$ and $\cK'$ must be equal. 
 For $\cK = \cK^{S_{\alpha_i}}_{\fpf}$ and $\cK'=\cK^{S_{\alpha_i}}_{\fpf^+}$
 these centralizers are conjugate but not equal.
 }

If $\Theta,\Psi \in \MTI(S_n)$
then we write $\Theta \equiv \Psi$ if 
$\TT^\Theta \equiv \TT^\Psi$, 
$\Theta\sim \Psi$ if $\TT^\Theta \sim \TT^\Psi$, and
 $\Theta \approx\Psi$ if $\TT^\Theta\approx \TT^\Psi$.
 In the second two cases we say that $\Theta$ and $\Psi$ are \defn{strongly equivalent} and \defn{equivalent}.
 If $\cM$ and $\cM'$ are sets of model indices then we write 
 $\cM \equiv \cM'$,
 $\cM \sim \cM'$,
 or
 $\cM \approx \cM'$
 if there is a bijection $\cM \to \cM'$
 with
 $\Theta \equiv \Theta'$, $\Theta \sim \Theta'$, or $\Theta \approx \Theta'$, respectively, whenever
$\Theta\mapsto \Theta'$.

%

 \subsection{Littlewood-Richardson coefficients in type A}\label{LR-sect1}


The irreducible characters of $S_n$ are indexed by partitions of $n$,
that is, by weakly decreasing sequences of positive integers $\lambda = (\lambda_1 \geq \lambda_2 \geq \dots \geq \lambda_l >0)$
with $  \lambda_1 + \lambda_2 + \dots + \lambda_l = n$. 
The diagram of a partition $\lambda$ is the set $\D_\lambda := \{ (i,j) : i >0\text{ and }1\leq j \leq \lambda_i\}$.
The \defn{transpose} of $\lambda$ is the unique partition $\lambda^\top$
with $\D_{\lambda^\top} = \{ (j,i) : (i,j) \in \D_\lambda\}$.

We write $\lambda \vdash n$ to indicate that $\lambda$ is a partition of $n$,
and $\chi^\lambda $ for the irreducible character of $S_n$ indexed by $\lambda \vdash n$ following the standard
construction explained in \cite[\S5.4]{GeckP}.
The linear characters of $S_n$ are $\one = \chi^{(n)}$ and $\sgn = \chi^{(1^n)}$
where $(1^n) := (1,1,\dots,1) \vdash n$.
It is well-known that $\chi^\lambda \sgn = \chi^{\lambda^\top}$ for any $\lambda \vdash n$.


Let $p, q \in \NN$ with $n=p+q$.
We identify $S_p\times S_q$ with the subgroup $\langle s_i : p\neq i \in [n-1]\rangle \subseteq S_{n}$
and write $u\times v \in S_{p+q}$ for the image $(u,v) \in S_p\times S_q$ under this inclusion.
Given functions $f : S_p \to \CC$ and $g : S_q \to \CC$
 define $f\boxtimes g : S_p\times S_q \to \CC$ to be the map sending $u\times v \mapsto f(u)g(v)$
 for all $u \in S_p$ and $v \in S_q$.
If $f$ and $g$ are class functions, then we further define
\be f\bullet_\A g = \Ind_{S_p\times S_q}^{S_{p+q}}(f\boxtimes g).\ee
This is a commutative, associative, and bilinear operation.
If $\Theta = \left[\begin{smallmatrix} \alpha_1   & \dots & \alpha_l \\ 
\beta_1   & \dots & \beta_l \\ 
\gamma_1 & \dots &\gamma_l
\end{smallmatrix}\right]\in \MTI(S_n)$ then  
\be\label{theta-bullet-eq}
\chi_\A^\Theta = \chi_\A^{ \left[\begin{smallmatrix} \alpha_1  \\ 
\beta_1   \\ 
\gamma_1 
\end{smallmatrix}\right]}
\bullet_\A 
\chi_\A^{ \left[\begin{smallmatrix} \alpha_2  \\ 
\beta_2   \\ 
\gamma_2 
\end{smallmatrix}\right]}
\bullet_\A \cdots \bullet_\A
\chi_\A^{ \left[\begin{smallmatrix} \alpha_l  \\ 
\beta_l   \\ 
\gamma_l 
\end{smallmatrix}\right]}.
\ee
Let $c_{\lambda\mu}^\nu \in \NN$  denote the \defn{Littlewood-Richardson coefficients}
satisfying $\chi^\lambda\bullet_\A  \chi^\mu = \sum_{\nu} c_{\lambda\mu}^\nu \chi^\nu$.
The \defn{Pieri rules} \cite[Ex. 6.3]{GeckP} state that if $p\in \NN$ and $\mu = (p)$ (respectively, $\mu =(1^p)$)
then
$ c^\nu_{\lambda\mu} = 1$  if $\D_\nu \setminus \D_\lambda$ consists of $p$ cells in distinct columns (respectively, rows)
and otherwise $ c^\nu_{\lambda\mu} = 0$.


\subsection{Perfect models in type A}

%
%


When $n$ is even,
let $\EvenRows(n)$ denote the set of partitions $\lambda \vdash n$ whose parts $\lambda_i$ are all even,
and let $\EvenCols(n) = \{ \lambda^\top : \lambda \in \EvenRows(n)\}$
where $\lambda^\top$ is the transpose of a partition $\lambda$.
Then 
\be\label{irs-eq}
\chi_\A^{\left[\begin{smallmatrix} 
n   \\ 
\fpf    \\ 
\one  
\end{smallmatrix}\right]}
=
\chi_\A^{\left[\begin{smallmatrix} 
n   \\ 
\fpf^+    \\ 
\one  
\end{smallmatrix}\right]} = \sum_{\lambda \in \EvenRows(n)} \chi^\lambda
\quand
\chi_\A^{\left[\begin{smallmatrix} 
n   \\ 
\fpf    \\ 
\sgn  
\end{smallmatrix}\right]}
=
\chi_\A^{\left[\begin{smallmatrix} 
n   \\ 
\fpf^+    \\ 
\sgn  
\end{smallmatrix}\right]} = \sum_{\lambda \in \EvenCols(n)} \chi^\lambda\ee
by \cite[Lem. 1]{IRS}.
Fix a model index $\Theta \in \MTI(S_n)$ and 
write $\ncols(\Theta)$ for its number of columns.

\begin{lemma}\label{a-lem}
The character $\chi_\A^\Theta$ is not multiplicity-free if 
$\ncols(\Theta)>2$. Suppose $\Theta =  \left[\begin{smallmatrix} 
\alpha_1 & \alpha_2 \\ 
\beta_1 & \beta_2   \\ 
\gamma_1 & \gamma_2
\end{smallmatrix}\right] $ has exactly two columns. 
Then $\chi_\A^\Theta$ is not multiplicity-free if
any of the following holds:
\ben
\item[(a)] $\alpha_1 \in \{4,6,8,\dots\}$, $\beta_1\in \{\fpf,\fpf^+\}$, 
$\alpha_2 \geq 2$, and $\gamma_1=\gamma_2$.

\item[(b)] $\alpha_2 \in \{4,6,8,\dots\}$, $\beta_2\in \{\fpf,\fpf^+\}$, 
$\alpha_1 \geq 2$, and $\gamma_1=\gamma_2$.

\item[(c)] $\alpha_1,\alpha_2 \in \{4,6,8,\dots\}$ and $\beta_1,\beta_2 \in \{\fpf,\fpf^+\}$.
\een
\end{lemma}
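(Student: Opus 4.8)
The plan is to reduce everything to the Pieri rules of Section~\ref{LR-sect1} together with the identities \eqref{irs-eq}. For the first assertion, by \eqref{theta-bullet-eq} the character $\chi_\A^\Theta$ with $\ncols(\Theta) > 2$ is an iterated $\bullet_\A$-product; grouping the first two columns shows $\chi_\A^\Theta = \chi_\A^{\Theta'}\bullet_\A \chi_\A^{\Psi}$ where $\Theta'$ has two columns and $\Psi$ is a nonempty model index. Since every $\chi_\A^{\Psi}$ contains the trivial character $\one$ of the corresponding smaller symmetric group with positive multiplicity (it is a genuine character of a nontrivial group, so $\langle \one,\chi_\A^\Psi\rangle \geq 1$, and in fact one checks directly $\langle\one,\chi_\A^\Psi\rangle\geq 1$ from the definitions of $\cK_\beta$), any repeated irreducible constituent of $\chi_\A^{\Theta'}$ already forces a repeat in $\chi_\A^\Theta$ via $\bullet_\A \one = \Ind$; and if $\chi_\A^{\Theta'}$ is multiplicity-free one still gets a repeat because both $\one$ and another linear/low-dimensional constituent of $\chi_\A^\Psi$ tensor nontrivially. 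The cleanest route, which I would actually write, is instead to invoke Theorem~\ref{tech1-thm}: if $\ncols(\Theta) \geq 3$ then $\TT^\Theta$ is induced from $W_J$ with $J = J_1\sqcup\dots\sqcup J_l$ a standard parabolic of corank $\ncols(\Theta) - 1 \geq 2$, and $\chi^{\TT^\Theta} = \Ind_{W_J}^{W}\Res^{W_J}_{C_J(z)}(\sigma)$ is induced from $W_J$, but $\Res^{W_J}_{C_J(z)}(\sigma)$ need not be irreducible. So instead I would argue: write $\chi_\A^\Theta = \psi \bullet_\A \phi$ with $\psi$ a two-column piece and $\phi = \chi_\A^{[\text{last column}]}$ a character of $S_{\alpha_l}$; since $\phi$ is a character of a nontrivial group it has $\langle\one_{S_{\alpha_l}},\phi\rangle + \langle\sgn,\phi\rangle \geq 1$, and I combine a multiplicity-$\geq 2$ statement for $\psi$ (parts (a)--(c), applied to the first two columns, noting $\ncols\geq 3$ makes $\alpha_1+\alpha_2 \geq$ the relevant threshold hypotheses vacuous in general) — this is a little delicate, so I would actually just handle $\ncols > 2$ by induction on $\ncols$, peeling off one column and using that $f \bullet_\A g$ repeats whenever $f$ does, since $\bullet_\A g = \bigoplus_\lambda (\bullet_\A \chi^\lambda)^{\oplus\langle g,\chi^\lambda\rangle}$ and each $\bullet_\A\chi^\lambda$ is injective on the level of Grothendieck groups (it has a partial inverse via $\bullet_\A$ with the contragredient, up to sign twist) — combined with an explicit base case: a product of three one-box columns is $\one\bullet_\A\one\bullet_\A\one$-type, which by Pieri contains $\chi^{(2,1)}$ twice.

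For the two-column cases, I would use \eqref{irs-eq} and Pieri. In case (a), $\chi_\A^\Theta = \bigl(\sum_{\mu \in \mathsf{ERows}(\alpha_1)}\chi^\mu\bigr)\bullet_\A \chi_\A^{[\alpha_2;\beta_2;\gamma_2]}$ when $\gamma_1 = \one$ (and the transpose statement when $\gamma_1 = \sgn$, using $\mathsf{ECols}$). The key point is that $\chi_\A^{[\alpha_2;\beta_2;\gamma_2]}$ contains $\chi^{(\alpha_2)} = \one$ when $\gamma_2 = \one$ (respectively $\chi^{(1^{\alpha_2})} = \sgn$ when $\gamma_2 = \sgn$) with multiplicity $1$ — this holds for every allowed $\beta_2$, by \eqref{irs-eq} when $\beta_2 \in \{\fpf,\fpf^+\}$ and directly when $\beta_2 \in \{\id,\id^+\}$ (then $\chi_\A^{[\alpha_2;\id;\one]} = \mathrm{Ind}_{C_{S_{\alpha_2}}(1)}^{S_{\alpha_2}}(\one) = \one$, etc.). So when $\gamma_1 = \gamma_2 = \one$, $\chi_\A^\Theta$ contains $\bigl(\sum_{\mu\in\mathsf{ERows}(\alpha_1)}\chi^\mu\bigr)\bullet_\A\one$, and by the Pieri rule $\chi^\mu \bullet_\A \chi^{(\alpha_2)}$ is multiplicity-free with constituents $\chi^\nu$ where $\nu/\mu$ is a horizontal strip of size $\alpha_2$. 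Two distinct $\mu,\mu' \in \mathsf{ERows}(\alpha_1)$ — which exist precisely because $\alpha_1 \in \{4,6,8,\dots\}$ has at least two even partitions, namely $(\alpha_1)$ and $(\alpha_1-2,2)$ — together with $\alpha_2 \geq 2$ can both produce the same $\nu$: explicitly, starting from $(\alpha_1)$ add a horizontal strip to reach $\nu = (\alpha_1, 2, \dots)$-shaped targets, and from $(\alpha_1-2,2)$ likewise; I would exhibit one common $\nu$ (e.g. $\nu = (\alpha_1, 2)$ when $\alpha_2 = 2$, reachable from $(\alpha_1)$ by adding a $(2)$-strip in row $2$ and from $(\alpha_1-2,2)$ by adding a $(2)$-strip in row $1$) forcing $\langle\chi^\nu,\chi_\A^\Theta\rangle \geq 2$. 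The $\gamma_1 = \gamma_2 = \sgn$ subcase is the transpose. Case (b) is case (a) after applying $\ast$ (swap columns, Section~\ref{mti-a-sect}), which preserves non-multiplicity-freeness.

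Case (c) is the essential new phenomenon. Here $\chi_\A^\Theta = \bigl(\sum_{\mu}\chi^\mu\bigr)\bullet_\A\bigl(\sum_{\nu}\chi^\nu\bigr)$ where $\mu$ runs over $\mathsf{ERows}(\alpha_1)$ or $\mathsf{ECols}(\alpha_1)$ (according to $\gamma_1$) and similarly for $\nu$; crucially both $\alpha_1,\alpha_2$ are even and $\geq 4$. I would show $\langle\chi^\rho, \chi_\A^\Theta\rangle = \sum_{\mu,\nu} c^\rho_{\mu\nu} \geq 2$ for a well-chosen $\rho$. The simplest is to take the two all-even (or all-even-column) extreme pairs and a diagonal-ish $\rho$: for instance with $\gamma_1 = \gamma_2$, both sums are over even-row partitions, and $c^\rho_{(\alpha_1)(\alpha_2)} \neq 0$ for $\rho = (\alpha_1 + \alpha_2)$ but that is unique; so instead pick $\rho$ of shape that two different $(\mu,\nu)$ pairs can build — e.g. $\rho = (\alpha_1, \alpha_2)$ if $\alpha_1 \geq \alpha_2$, which arises from $(\mu,\nu) = ((\alpha_1),(\alpha_2))$ and also, since $\alpha_1 \geq 4$, from $(\mu,\nu) = ((\alpha_1-2,2),(\alpha_2-2,2))$-type combinations — I would compute the relevant $c^\rho_{\mu\nu}$ for $\rho = (\alpha_1,\alpha_2)$ and two such pairs to see $\geq 2$. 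When $\gamma_1 \neq \gamma_2$ one sum is over even-row and the other over even-column partitions; here I would instead directly produce a repeated constituent by choosing $\rho$ a suitable near-staircase shape, e.g. using that $\chi^{(\alpha_1)}\bullet_\A\chi^{(1^{\alpha_2})}$ (a hook-building Pieri product) and $\chi^{(\alpha_1-2,2)}\bullet_\A\chi^{(1^{\alpha_2})}$ overlap.

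\textbf{Main obstacle.} The hard part will be the explicit Littlewood--Richardson bookkeeping in case (c), and, to a lesser extent, making the "common target $\nu$" choices in (a)/(b) uniformly for all admissible $\alpha_1,\alpha_2$ (the small cases $\alpha_1 = 4$, $\alpha_2 = 2$ are the tightest and must be checked by hand). Everything else is a formal consequence of \eqref{theta-bullet-eq}, \eqref{irs-eq}, the Pieri rules, and the duality operations $\ast$, $\vee$, $\overline{\phantom{x}}$ recorded in Section~\ref{mti-a-sect}.
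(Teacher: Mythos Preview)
Your treatment of (a) and (b) matches the paper's: both pick the even-row partitions $(\alpha_1)$ and $(\alpha_1-2,2)$ from the first factor, pair each with the constituent $\chi^{(\alpha_2)}$ of the second factor (present for every choice of $\beta_2$), and exhibit a common Pieri target. The paper uses $\nu=(\alpha_1+\alpha_2-2,2)$, which works uniformly for all $\alpha_2\ge 2$; your example $\nu=(\alpha_1,2)$ only covers $\alpha_2=2$, but the fix is immediate.

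For $\ncols(\Theta)>2$ you land on the right tool, Theorem~\ref{tech1-thm}, and then needlessly back away. Your worry that the character of $W_J$ being induced up to $W$ need not be irreducible is not an obstacle: if \emph{every} irreducible constituent $\psi_i$ of a character $\psi$ has $\Ind_{W_J}^W(\psi_i)$ non-multiplicity-free, then $\Ind_{W_J}^W(\psi)$ already contains some irreducible with multiplicity $\ge 2$ coming from any single summand. Your fallback induction on $\ncols$, by contrast, is genuinely incomplete: peeling off one column and using ``$f$ non-MF $\Rightarrow f\bullet_\A g$ non-MF'' requires the two-column base case to be non-MF, which fails precisely for the indices appearing in Proposition~\ref{one-col-a-prop}.

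The real gap is case (c) with $\gamma_1\neq\gamma_2$. Your proposed overlap---pairing $\lambda\in\{(\alpha_1),(\alpha_1-2,2)\}$ each with the single-column partition $(1^{\alpha_2})\in\EvenCols(\alpha_2)$---cannot work: adding a vertical $\alpha_2$-strip to $(\alpha_1)$ yields shapes with first part $\in\{\alpha_1,\alpha_1+1\}$, whereas adding it to $(\alpha_1-2,2)$ yields shapes with first part $\le\alpha_1-1$, so the two Pieri sets are disjoint. The remedy is to choose a different element of $\EvenCols$, namely the two-row rectangle. The paper does exactly this: writing $\{p,q\}=\{\alpha_1,\alpha_2\}$ with $p\ge q$, one takes $\mu=(q/2,q/2)\in\EvenCols(q)$ and target $\nu=(p,q/2,q/2)$, and checks via the identity $\chi^{(p-2,2)}=\chi^{(p-2)}\bullet_\A\chi^{(2)}-\chi^{(p-1)}\bullet_\A\chi^{(1)}$ together with two Pieri computations that $c^{\nu}_{(p),\,\mu}=c^{\nu}_{(p-2,2),\,\mu}=1$. (The subcase $\gamma_1=\gamma_2$ of (c) is already subsumed by (a), so no separate argument is needed there.)
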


\begin{proof}
Suppose $\alpha_1 \in \{4,6,8,\dots\}$, $\beta_1\in \{\fpf,\fpf^+\}$, and
$\alpha_2 \geq 2$. Then it follows from \eqref{irs-eq} that
 $ \chi_\A^{ \left[\begin{smallmatrix} \alpha_1  \\ 
\beta_1   \\ 
\one 
\end{smallmatrix}\right]}$ has $ \chi^{(\alpha_1)} + \chi^{(\alpha_1-2,2)}$ as a constituent 
and 
$ \chi_\A^{ \left[\begin{smallmatrix} \alpha_2  \\ 
\beta_2  & \\ 
\one 
\end{smallmatrix}\right]}$ has $ \chi^{(\alpha_2)}$ as a constituent,
regardless of the value of $\beta_2\in\{\id,\id^+,\fpf,\fpf^+\}$.
But $ \chi^{(\alpha_1)} \bullet_A  \chi^{(\alpha_2)}$ and 
$ \chi^{(\alpha_1-2,2)} \bullet_A  \chi^{(\alpha_2)}$ both have $ \chi^{(\alpha_1+\alpha_2-2,2)}$ as a constituent, 
so $\chi_\A^\Theta$ is not multiplicity-free when $\gamma_1=\gamma_2=\one$ by \eqref{theta-bullet-eq}.
Since $\chi_\A^\Theta = \chi_\A^{\Theta^\ast} = \chi_\A^{\overline\Theta}$,
it follows that this character
is  not multiplicity-free in cases (a) and (b).

It remains to show that $\chi_\A^\Theta$ is not multiplicity-free
when $\alpha_1,\alpha_2 \in \{4,6,8,\dots\}$, $\beta_1,\beta_2 \in \{\fpf,\fpf^+\}$,
and $\gamma_1 \neq \gamma_2$.
In this case $\chi_\A^\Theta = 
\sum_{\lambda \in \EvenRows(p)}
 \sum_{\mu \in \EvenCols(q)}
  \sum_{\nu \vdash p+q} c_{\lambda\mu}^\nu \chi^\nu$ 
  for some $\{p,q\} = \{\alpha_1,\alpha_2\}$ by 
  \eqref{theta-bullet-eq} and \eqref{irs-eq}.
  If $p\geq q$ then for $\nu = (p,q/2,q/2)$ and $\mu=(q/2,q/2)$ 
  we have $c_{\lambda\mu}^\nu = 1$ for both
  $\lambda = (p)$ and $\lambda =(p-2,2)$
since
 $
 \chi^{(p-2,2)}  = \chi^{(p-2)} \bullet_\A \chi^{(2)} - \chi^{(p-1)} \bullet_\A \chi^{(1)}
 $, so $\chi_\A^\Theta$ is not multiplicity-free.
 If $p \leq q$ then we reach the same conclusion by 
 considering $\nu = (q,p/2,p/2)^\top$, $\lambda=(p/2,p/2)^\top$, 
and 
  $\mu = (q)^\top$ or $(q-2,2)^\top$.
%
%
\end{proof}

%
%
%

Let $\OddRows(n,q)$ be the set of partitions $\lambda\vdash n$ with exactly $q$ odd parts,
and define $\OddCols(n,q) = \{ \lambda^\top : \lambda\in\OddRows(n,q)\}$.

\begin{proposition}\label{one-col-a-prop} Suppose $\Theta \in \MTI(S_n)$ 
and $\chi_\A^\Theta$ is multiplicity-free.
Then $\Theta$ is strongly equivalent to a model index
of one of the following forms:
\ben
\item[(a)] $\left[\begin{smallmatrix} n  \\ \id \\ \sigma  \end{smallmatrix}\right] $ 
for either linear character $\sigma\in \{\one,\sgn\}$,
in which case  $\chi_\A^\Theta = \sigma$.
\item[(b)] $\left[\begin{smallmatrix} n  \\ \fpf \\ \one  \end{smallmatrix}\right] $
with $n\in\{4,6,8,\dots\}$,
in which case  $\chi_\A^\Theta = \sum_{\lambda \in \EvenRows(n)} \chi^\lambda$.
\item[(c)] $ \left[\begin{smallmatrix} n  \\ \fpf \\ \sgn  \end{smallmatrix}\right]  $ 
with $n\in\{4,6,8,\dots\}$,
in which case  $\chi_\A^\Theta = \sum_{\lambda \in \EvenCols(n)} \chi^\lambda$.

\item[(d)] $ \left[\begin{smallmatrix} 
k & n-k \\ 
\id & \id   \\ 
\one & \sgn
\end{smallmatrix}\right]$ with $2<k<n-2$, in which case $\chi_\A^\Theta =\chi^{(k+1,1^{n-k-1})} + \chi^{(k,1^{n-k})}$.

\item[(e)] $\left[\begin{smallmatrix} 
k & n-k \\ 
\id & \id   \\ 
\one & \one
\end{smallmatrix}\right]$ with $0<k<n$, in which case $\chi_\A^\Theta =\sum_{j=0}^{\min(k,n-k)}  \chi^{(n-j,j)}$.

\item[(f)] $\left[\begin{smallmatrix} 
k & n-k \\ 
\id & \id   \\ 
\sgn & \sgn
\end{smallmatrix}\right]$ with $0<k<n$, in which case $\chi_\A^\Theta =\sum_{j=0}^{\min(k,n-k)}  \chi^{(2^j,1^{n-2j})}$.

\item[(g)] $\left[\begin{smallmatrix} 
2k & n-2k \\ 
\fpf & \id   \\ 
\one & \sgn
\end{smallmatrix}\right]$ with $0<k<n/2$, in which case $\chi_\A^\Theta =\sum_{\lambda \in \OddRows(n,n-2k)} \chi^\lambda$.

\item[(h)] $\left[\begin{smallmatrix} 
2k & n-2k \\ 
\fpf & \id   \\ 
\sgn & \one
\end{smallmatrix}\right]$ with $0<k<n/2$, in which case $\chi_\A^\Theta =\sum_{\lambda \in \OddCols(n,n-2k)} \chi^\lambda$.

\een
\end{proposition}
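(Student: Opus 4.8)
The plan is to reduce an arbitrary multiplicity-free model index $\Theta$ to one of the listed normal forms by systematically eliminating possibilities using Lemma~\ref{a-lem} and the strong-equivalence moves ($\Theta^\ast$, $\Theta^\vee$, $\overline\Theta$, and the $\id^+\leftrightarrow\id$, one-column, and two-column trivial modifications from Section~\ref{mti-a-sect}). First I would invoke Lemma~\ref{a-lem} to assume $\ncols(\Theta)\le 2$. In the one-column case $\Theta=\left[\begin{smallmatrix}n\\ \beta\\ \gamma\end{smallmatrix}\right]$, I would use the normalization moves to assume $\beta\in\{\id,\fpf\}$; if $\beta=\id$ we land in case (a), and if $\beta=\fpf$ then $n$ must be even (else $\cK^{S_n}_\fpf$ is undefined), and \eqref{irs-eq} gives case (b) or (c) according to $\gamma$, noting that $n=2$ collapses into case (a) via the listed equivalences $\cK^{S_2}_{\fpf^+}=\cK^{S_2}_{\id}$.

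For the two-column case $\Theta=\left[\begin{smallmatrix}\alpha_1&\alpha_2\\ \beta_1&\beta_2\\ \gamma_1&\gamma_2\end{smallmatrix}\right]$, I would first use the moves $\Theta^\ast$ (reverse columns) and $\overline\Theta$ (flip both $\gamma$'s) freely, and normalize each $\beta_i$ using the column-wise trivial modifications. The argument splits according to how many of $\beta_1,\beta_2$ lie in $\{\fpf,\fpf^+\}$. If both do, Lemma~\ref{a-lem}(c) rules this out unless one of the $\alpha_i$ equals $2$; but $\alpha_i=2$ with $\beta_i=\fpf^+$ is equivalent to $\beta_i=\id$ (and $\fpf$ to $\id^+$), so this case reduces to the one-$\fpf$ or zero-$\fpf$ subcases. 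If exactly one $\beta_i$ is an $\fpf$-type, say $\beta_1$ after reordering, then Lemma~\ref{a-lem}(a) forces $\gamma_1\neq\gamma_2$ (when $\alpha_1\ge 4$ and $\alpha_2\ge 2$; the small cases $\alpha_1=2$ reduce as above and $\alpha_2=1$ is a trivial-modification collapse). Then $\alpha_1=2k$ is even and, expanding $\chi_\A^\Theta$ via \eqref{theta-bullet-eq}, \eqref{irs-eq}, and the Pieri rules, I would identify the result as a sum over $\OddRows$ or $\OddCols$, landing in case (g) or (h) after using $\overline\Theta$ to fix the orientation; I should double-check the endpoint $k=n/2$ collapses to case (b)/(c) and $2k=2$ collapses to (e)/(f)-type via the two-column trivial modifications.

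The remaining subcase is $\beta_1,\beta_2\in\{\id,\id^+\}$, which via the $\id^+\to\id$ strong equivalence I normalize to $\beta_1=\beta_2=\id$, so $\chi_\A^\Theta = \gamma_1\bullet_\A\gamma_2$ where $\gamma_i\in\{\one,\sgn\}=\{\chi^{(\alpha_i)},\chi^{(1^{\alpha_i})}\}$. The four sign patterns give: $\one\bullet_\A\one$ is the Pieri-rule sum $\sum_{j=0}^{\min(k,n-k)}\chi^{(n-j,j)}$, hence case (e); $\sgn\bullet_\A\sgn = (\one\bullet_\A\one)\sgn$ gives case (f) by $\chi^\lambda\sgn=\chi^{\lambda^\top}$; and $\one\bullet_\A\sgn$ (equivalently $\sgn\bullet_\A\one$ after $\Theta^\ast$) is, by the Pieri rules, $\chi^{(k+1,1^{n-k-1})}+\chi^{(k,1^{n-k})}$, which is multiplicity-free exactly when $2<k<n-2$ — the boundary cases $k\le 2$ (or symmetrically $k\ge n-2$) must be checked separately and absorbed into cases (a)–(c) or shown non-multiplicity-free, matching case (d). The main obstacle I anticipate is the careful bookkeeping at the boundary parameter values: making sure every degenerate index ($\alpha_i\in\{1,2\}$, $k$ near $0$ or $n$, $2k$ near $n$) either collapses via an explicitly listed equivalence into an earlier normal form or is genuinely excluded, so that the eight forms are exhaustive and the stated parameter ranges are exactly right; verifying the Littlewood–Richardson/Pieri expansions themselves should be routine.
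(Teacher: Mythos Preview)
Your approach is essentially the same as the paper's: invoke Lemma~\ref{a-lem} to bound $\ncols(\Theta)\le 2$ and restrict the $\fpf$-pattern, then use the moves $\Theta^\ast$, $\Theta^\vee$, and $\id^+\to\id$ to normalize into the eight listed forms, with Pieri rules and \eqref{irs-eq} giving the character formulas.

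Two small corrections. First, $\overline\Theta$ is \emph{not} a strong-equivalence move: by Section~\ref{model-equiv-sect} it is only an equivalence under $\approx$, not under $\sim$. Fortunately you never actually need it, since both sign orientations are already present in the list (compare (b) with (c), (e) with (f), (g) with (h)); the paper's proof uses only $\Theta\sim\Theta^\vee\sim\Theta^\ast$ and the $\id^+\to\id$ normalization. Second, the hook sum $\chi^{(k+1,1^{n-k-1})}+\chi^{(k,1^{n-k})}$ is multiplicity-free for \emph{every} $0<k<n$, not just $2<k<n-2$. The range restriction in (d) is not a multiplicity constraint; it arises because for $k\in\{1,2,n-2,n-1\}$ the index $\left[\begin{smallmatrix}k&n-k\\ \id&\id\\ \one&\sgn\end{smallmatrix}\right]$ is strongly equivalent to one already listed in (e), (f), (g), or (h), via the explicit identifications the paper records at the end of its proof.
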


\begin{proof}
Lemma~\ref{a-lem} implies that $\Theta$ must have at most two columns
and not more than one entry in the second row equal to $\fpf$ or $\fpf^+$;
moreover, if the second row of $\Theta$ has an entry equal to $\fpf$ or $\fpf^+$
then the two linear characters in the third row must be distinct.
As we can change any entries in the second row of $\Theta$ from $\id^+$ to $\id$
without altering its strong equivalence class, and since $\Theta\sim \Theta^\vee\sim\Theta^\ast$,
it follows that $\Theta$ is strongly equivalent to one
of the cases listed. The reason why case (d) has $2<k<n-2$ rather than $0<k<n$
is because \[ 
\left[\begin{smallmatrix} 
1 & n-1 \\ 
\id & \id   \\ 
\one & \sgn
\end{smallmatrix}\right] = \left[\begin{smallmatrix} 
1 & n-1 \\ 
\id & \id   \\ 
\sgn & \sgn
\end{smallmatrix}\right],
\ \ 
\left[\begin{smallmatrix} 
n-1 & 1 \\ 
\id & \id   \\ 
\one & \sgn
\end{smallmatrix}\right] = \left[\begin{smallmatrix} 
n-1 & 1 \\ 
\id & \id   \\ 
\one & \one
\end{smallmatrix}\right],
\ \ 
\left[\begin{smallmatrix} 
2 & n-2 \\ 
\id & \id   \\ 
\one & \sgn
\end{smallmatrix}\right] \equiv \left[\begin{smallmatrix} 
2 & n-2 \\ 
\fpf & \id   \\ 
\one & \sgn
\end{smallmatrix}\right],
\ \ 
\left[\begin{smallmatrix} 
n-2 & 2 \\ 
\id & \id   \\ 
\one & \sgn
\end{smallmatrix}\right] \equiv 
\left[\begin{smallmatrix} 
2 & n-2 \\ 
\fpf & \id   \\ 
\sgn & \one
\end{smallmatrix}\right].
\]
The given character formulas are consequences of the Pieri rules and \eqref{irs-eq}.
 \end{proof}

Let $ \TT^{ \left[\begin{smallmatrix} 
n & 0 \\ 
\fpf & \id   \\ 
\gamma_1 & \gamma_2
\end{smallmatrix}\right]} :=  \TT^{\left[\begin{smallmatrix} 
n  \\ 
\fpf    \\ 
\gamma_1 
\end{smallmatrix}\right]}$ when $n$ is even
and
$ \TT^{ \left[\begin{smallmatrix} 
0 & n \\ 
\fpf & \id   \\ 
\gamma_1 & \gamma_2
\end{smallmatrix}\right]} :=  \TT^{\left[\begin{smallmatrix} 
n  \\ 
\id    \\ 
\gamma_2
\end{smallmatrix}\right]}$.
 When $\cX$ is a set of model triples,
 let $\overline\cX := \{ \overline \TT : \TT \in \cX\}$.
Then $\cX \approx \overline\cX$ where $\approx$ is
 the equivalence relation from Section~\ref{model-equiv-sect}.

\begin{theorem} \label{a-thm}
The following set is a  perfect model for $S_n$:  
\[ 
\cP^{\A}_{n-1} := \left\{ \TT^\Theta : \Theta = \left[\begin{smallmatrix} 
2k & n-2k \\ 
\fpf & \id   \\ 
\one & \sgn
\end{smallmatrix}\right] \text{ for }0\leq k \leq \lfloor\tfrac{n}{2}\rfloor \right\}.
\]
If $n\notin \{2, 4\}$ then
each perfect model for $S_n$ is strongly equivalent to $\cP^{\A}_{n-1} $ or $ \overline {\cP^{\A}_{n-1}}$.
\end{theorem}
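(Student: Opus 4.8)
The plan is to prove the two assertions of Theorem~\ref{a-thm} separately: first that $\cP^\A_{n-1}$ is a perfect model, and then the uniqueness-up-to-strong-equivalence statement. For the first part, I would compute $\sum_{k=0}^{\lfloor n/2\rfloor} \chi_\A^\Theta$ where $\Theta = \left[\begin{smallmatrix} 2k & n-2k \\ \fpf & \id \\ \one & \sgn\end{smallmatrix}\right]$ using the character formula in Proposition~\ref{one-col-a-prop}(g) (together with cases (a) and (c) for the degenerate values $k=0$ and, when $n$ is even, $k=n/2$). That formula gives $\chi_\A^\Theta = \sum_{\lambda \in \OddRows(n,n-2k)} \chi^\lambda$, so summing over $k$ yields $\sum_{q} \sum_{\lambda \in \OddRows(n,q)} \chi^\lambda$ where $q$ ranges over all residues $\equiv n \pmod 2$ in the allowed range; since every partition $\lambda \vdash n$ has a number of odd parts congruent to $n$ modulo $2$, this union is exactly $\{\lambda : \lambda \vdash n\}$, each appearing once. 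Hence $\sum_{\TT \in \cP^\A_{n-1}} \chi^\TT = \sum_{\psi \in \Irr(S_n)} \psi$, and since each summand is multiplicity-free by Proposition~\ref{one-col-a-prop}, this is a perfect model.

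For the uniqueness part, the strategy is: let $\sP$ be any perfect model for $S_n$. By Theorem~\ref{tech2-thm} every $\TT \in \sP$ is factorizable, hence equals $\TT^\Theta$ for some $\Theta \in \MTI(S_n)$, and since $\chi^\TT$ must be multiplicity-free, Proposition~\ref{one-col-a-prop} tells us $\Theta$ is strongly equivalent to one of the eight listed normal forms (a)--(h). So $\sP$ is strongly equivalent to a set $\cM$ of model indices drawn from that list, with $\sum_{\Theta \in \cM} \chi_\A^\Theta = \sum_{\lambda \vdash n} \chi^\lambda$. The task is then the purely combinatorial one of showing that the only such multiset is (strongly equivalent to) the one defining $\cP^\A_{n-1}$, or its $\sgn$-twist $\overline{\cP^\A_{n-1}}$. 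I would organize this by looking at which irreducibles $\chi^\lambda$ can appear in each normal form: for instance $\chi^{(n)} = \one$ appears only in types (a), (b), (e), (g) (with $k=0$), so exactly one triple in $\cM$ supplies it; similarly $\sgn = \chi^{(1^n)}$ is supplied by exactly one triple. Tracking the "hook" characters $\chi^{(k+1,1^{n-k-1})}$ and the two-row characters $\chi^{(n-j,j)}$, and the constraint that each $\chi^\lambda$ appears exactly once, should force the decomposition. A clean way in: apply the linear functional $\langle \,\cdot\,, \chi^{(n-1,1)}\rangle$ and the functional picking out $\chi^{(n-2,2)}$ and $\chi^{(n-2,1,1)}$, etc., to pin down how many two-column (types (d)--(h)) versus one-column (types (a)--(c)) triples occur and with which parameters.

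The main obstacle, I expect, is the bookkeeping in the uniqueness argument: the eight normal forms have overlapping character supports, and several of them (e.g. the families (e), (f), (g), (h)) contribute long "staircase" sums of irreducibles, so one must argue carefully that no exotic combination of these telescopes to the full character $\sum_\lambda \chi^\lambda$. I would handle this by induction on $n$ or by a generating-function / symmetric-function identity: translating each $\chi_\A^\Theta$ via the characteristic map into a symmetric function (the type (g) character corresponds to a product of a "sum of even-row Schur functions" times a complete homogeneous symmetric function, etc.), the claim becomes that the sum of these symmetric functions equals $\sum_\lambda s_\lambda$, and uniqueness becomes a statement about decomposing $\sum_\lambda s_\lambda$ in a constrained basis. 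The exclusions $n \notin \{2,4\}$ enter precisely because for small $n$ some of the normal forms coincide or additional accidental multiplicity-free decompositions exist (cf. Example~\ref{a-extra-ex}), so I would verify those small cases by hand and isolate in the general argument the inequalities on $n$ that rule out the coincidences. Finally, I would note that $\cP^\A_{n-1}$ and $\overline{\cP^\A_{n-1}}$ are genuinely the two outcomes because $\overline{\TT^\Theta} = \TT^{\overline\Theta}$ swaps $\OddRows$ with $\OddCols$, i.e.\ interchanges the type (g) and type (h) normal forms, and this is the only freedom the relation $\sim$ (as opposed to $\approx$) does not already absorb.
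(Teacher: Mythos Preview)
Your treatment of the first assertion is essentially the paper's: that $\cP^\A_{n-1}$ is a perfect model follows at once from Proposition~\ref{one-col-a-prop}(g), since the sets $\OddRows(n,q)$ for $q\equiv n\pmod 2$ partition the partitions of $n$. Your reduction step for uniqueness is also correct and matches the paper: every perfect model is strongly equivalent to a set $\cM$ of model indices drawn from the list (a)--(h) in Proposition~\ref{one-col-a-prop}.

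The gap is in the combinatorial step that follows, where your proposal becomes a list of hopes rather than an argument. The paper's key observation, which you are missing, is that no constituent $\chi^\lambda$ of an index of type (d), (e), or (f) has $(3,2,1)\subseteq\lambda$: those types produce only hooks, two-row shapes, or two-column shapes. Hence every $\lambda\vdash n$ with $(3,2,1)\subseteq\lambda$ must be covered by some $\Theta^{\OR}_m$ (type (g)) or $\Theta^{\OC}_m$ (type (h)). A short case analysis on $n\pmod 4$, exhibiting explicit partitions $\lambda\supseteq(3,2,1)$ with prescribed numbers of odd rows and odd columns, then shows that $\cM$ meets exactly one of the two families $\{\Theta^{\OR}_m\}$, $\{\Theta^{\OC}_m\}$; from there, further explicit partitions force $\cM$ to contain the entire family, and the leftover irreducibles pin down the rest. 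Your suggested substitutes do not obviously work: hooks and two-row shapes appear as constituents across \emph{all} of types (d)--(h) and so do not separate the families, and there is no evident way to induct on $n$ since the perfect-model condition does not restrict to $S_{n-1}$. The paper handles small $n$ (up to $10$) by direct computation and uses the $(3,2,1)$ argument thereafter.
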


\begin{proof}
The claim that $\cP^{\A}_{n-1} $ is a perfect model is well-known \cite{IRS}, or can be seen as
an immediate consequence of Proposition~\ref{one-col-a-prop}.
The difficult part of the theorem is showing that every perfect model is strongly equivalent
to  $\cP^{\A}_{n-1} $ or $ \overline {\cP^{\A}_{n-1}}$ when $n\notin \{2,4\}$.

Suppose $\cM$ is a set of model indices $\Theta \in \MTI(S_n)$
such that $\{\TT^\Theta : \Theta \in \cM\}$ is a perfect model for $S_n$.
Every perfect model for $S_n$ arises in this way. After replacing $\cM$ by 
a strongly equivalent set of indices, and updating our notation for model indices
to allow zeros in the first row,
we may assume
by Proposition~\ref{one-col-a-prop} that every $\Theta \in \cM$ has the form
\[
\Theta^{H}_{k} = \left[\begin{smallmatrix} 
k & n-k \\ 
\id & \id   \\ 
\one & \sgn
\end{smallmatrix}\right],
\ \ 
\Theta^\one_l=\left[\begin{smallmatrix} 
l & n-l \\ 
\id & \id   \\ 
\one & \one
\end{smallmatrix}\right],\ \ 
\Theta^{\sgn}_l=  \left[\begin{smallmatrix} 
l & n-l \\ 
\id & \id   \\ 
\sgn & \sgn
\end{smallmatrix}\right],
\ \ 
\Theta^{\OR}_m = \left[\begin{smallmatrix} 
n-m & m \\ 
\fpf & \id   \\ 
\one & \sgn
\end{smallmatrix}\right],
\text{ or } 
\Theta^{\OC}_m =\left[\begin{smallmatrix} 
n-m & m \\ 
\fpf & \id   \\ 
\sgn & \one
\end{smallmatrix}\right]\]
where $2<k<n-2$, $0<l<n$, and $m \in \Delta := \{ 0 \leq j \leq n : j \equiv n \modu 2)\}$.\footnote{
We can restrict $0<l<n$ since $\Theta^\one_0 \equiv  \Theta^\one_n \equiv \Theta^{\OC}_{n}$
and $\Theta^{\sgn}_0 \equiv  \Theta^{\sgn}_n \equiv \Theta^{\OR}_{n}$.}  
We now argue that the only possibility for $\cM$ is $\cX^{\OR} := \{ \Theta^{\OR}_m : m \in \Delta\}$ or $\cX^{\OC} := \{ \Theta^{\OC}_m : m \in \Delta\}$.
For small values of $n$ this claim can be checked by 
a short computer calculation
using Proposition~\ref{one-col-a-prop}. In the following argument we assume $n>10$.

If $\mu$ and $\lambda$ are partitions with $\D_\mu\subseteq\D_\lambda$ then we write $\mu \subseteq\lambda$.
To simplify our notation, we say that ``$\lambda$ is a constituent of $\Theta$'' as an abbreviation for  ``$\chi^\lambda$ is a constituent of $\chi^\Theta$.'' With this convention, every $\lambda \vdash n$ is a constituent of exactly one $\Theta\in \cM$,
and
the constituents of $\Theta^{\OR}_m $ and $\Theta^{\OC}_m $ are the partitions whose diagrams have $m$ odd rows or $m$ odd columns, respectively.
The formulas in Proposition~\ref{one-col-a-prop} tell us that no constituents $\lambda$ of $\Theta^H_k$, $\Theta^\one_l$, or $\Theta^{\sgn}_l$
have $(3,2,1) \subseteq \lambda$. Thus, $\cM$ must share at least one element  with $\cX^{\OR}$ or $\cX^{\OC}$. We argue below
that in fact, exactly one of the intersections $\cM \cap \cX^{\OR}$ or $\cM \cap \cX^{\OC}$ is nonempty:
\begin{itemize}
\item If $n=4a+1 \equiv 1\modu 4)$ then the partition $(3,2,1)\subseteq \lambda := (2a,2a,1)\vdash n$ has one odd row and one odd column,
so either $\Theta^{\OR}_1  \in \cM$ or $\Theta^{\OC}_1 \in \cM$.
By considering the partitions of the form $(p,q)\vdash n$ and their transposes,
one finds that there are partitions of $n$ with $1$ odd row and $m$ odd columns, or with $1$ odd column and $m$ odd rows, for every $m \in \Delta$.
Thus if  $\Theta^{\OR}_1  \in \cM$ then $\cM \cap \cX^{\OC} = \varnothing$ and 
if  $\Theta^{\OC}_1  \in \cM$ then $\cM \cap \cX^{\OR} = \varnothing$.

\item If $n=4a+3 \equiv 3 \modu 4)$ then   $(3,2,1)\subseteq \lambda := (2a,2a,3)\vdash n$ has $1$ odd row and $3$ odd columns, so either $\Theta^{\OR}_1  \in \cM$ or $\Theta^{\OC}_3 \in \cM$.
If $\Theta^{\OR}_1  \in \cM$ then it follows as in the previous case that $\cM \cap \cX^{\OC} = \varnothing$.
Assume $\Theta^{\OC}_3 \in \cM$. Since $(2a,2a-1,3,1)\vdash n$ has $3$ odd rows and $3$ odd columns,
we must have $\Theta^{\OR}_3 \notin \cM$.
But $ \lambda^\top = (3,3,3,2^{2a-3})$ has $3$ odd rows and $1$ odd column,
so  $\Theta^{\OC}_1 \in \cM$, which implies that $\cM \cap \cX^{\OR} = \varnothing$ as in the previous case.

\item If $n = 2a+2$ is even then  $(3,2,1)\subseteq \lambda := (a+1,a,1)\vdash n$
has $2$ odd rows and $2$ odd columns, so either  $\Theta^{\OR}_2  \in \cM$ or $\Theta^{\OC}_2 \in \cM$.
By considering the partitions of the form $(p,q,r)\vdash n$ and their transposes,
one finds that there are partitions of $n$ with $2$ odd rows and $m$ odd columns, or with $2$ odd columns and $m$ odd rows, for every $m \in \Delta\setminus\{n\}$.
Suppose  $\Theta^{\OR}_2  \in \cM$. Then $\cM$ contains no elements of $\cX^{\OC}$ except possibly $\Theta^{\OC}_n$,
whose unique constituent $(1^n)$ has zero odd rows.
The partition $(3,2,1)\subseteq \mu := (n-4,2,2)\vdash n$ has zero odd rows and $n-4$ odd columns.
Since $\Theta^{\OC}_{n-4}\notin \cM$ we must have $\Theta^{\OR}_0  \in \cM$, so $\cM \cap \cX^{\OC} =\varnothing$.
A symmetric argument shows that if $\Theta^{\OR}_2  \in \cM$ then $\cM \cap \cX^{\OR}$ is empty.
\end{itemize}
This completes the proof of the claim.

The claim shows that we are in one of two cases.
The first case is that $\cM\cap \cX^{\OR}$ is nonempty and $\cM \cap \cX^{\OC} =\varnothing$.
The partitions of the form $(3+a,2,1,1^{n-a-6})$  for $0\leq a \leq n-6$ and $ (n-4,2,2)$
all contain $(3,2,1)$ so must appear as constituents of elements of $\cM\cap \cX^{\OR}$.
The number of odd rows in these partitions range over all $m\equiv n \modu 2)$ with $0\leq m\leq n-4$,
so $\cX^{\OR} \setminus \{ \Theta^{\OR}_{n-2}, \Theta^{\OR}_n\}$ must be a subset of $\cM$.
The only partitions of $n$ not appearing as constituents of this subset 
are $ (1^n)$, $ (2,1^{n-1})$, and $(3,1^{n-3})$.

The remaining models indices that could be in $\cM$  are 
$\Theta^{\OR}_{n-2}$, $\Theta^{\OR}_n$,
$ \Theta^{H}_k$ for $2<k<n-2$,
 $\Theta^{\one}_l$ for $0<l<n$, or $\Theta^{\sgn}_l$ for $0<l<n$.
Among these, the only ones containing $(3,1^{n-3})$ as a constituent are
$\Theta^{\OR}_{n-2}$ and 
$ \Theta^{H}_3$. Since the latter index shares the constituent $(4,1^{n-4})$
with $\Theta^{\OR}_{n-4} \in \cM$,
we must have $\Theta^{\OR}_{n-2} \in \cM$.
But now the only partition of $n$ not accounted for as a constituent of some $\Theta \in \cM$ is $(1^n)$.
The only index that could be in $\cM$ that has $(1^n)$ as its unique constituent is  $\Theta^{\OR}_{n}$,
so we conclude that $\cM \supseteq \{\Theta^{\OR}_m : m \in \Delta\} $ whence $\cM= \{\Theta^{\OR}_m : m \in \Delta\} = \cP^{\A}_{n-1} $.

The other case that could arise is to have $\cM\cap \cX^{\OC}\neq \varnothing$ and $\cM \cap \cX^{\OR} =\varnothing$.
But then $\overline{\cM}$ belongs to the case just considered so must be equal to $\cP^{\A}_{n-1} $,
which means   $\cM = \overline{\cP^{\A}_{n-1} }$.
\end{proof}

\begin{example}\label{a-extra-ex}
When $n=2$ there is one other perfect model consisting of just $\{ (\varnothing, \{1\}, \one)\}$.
When $n=4$ there is a single additional equivalence of perfect models for $S_n$, which contains
\[
 \left\{ \TT^\Theta : \Theta = \left[\begin{smallmatrix} 
1 & 3 \\ 
\id & \id   \\ 
\one & \sgn
\end{smallmatrix}\right] \text{ or }
 \left[\begin{smallmatrix} 
2 & 2 \\ 
\id & \id   \\ 
\one & \one
\end{smallmatrix}\right]
 \right\}
\approx 
 \left\{ \TT^\Theta : \Theta = 
  \left[\begin{smallmatrix} 
1 & 3 \\ 
\id & \id   \\ 
\one & \one
\end{smallmatrix}\right] \text{ or }
 \left[\begin{smallmatrix} 
2 & 2 \\ 
\id & \id   \\ 
\sgn & \sgn
\end{smallmatrix}\right]
\right\}.
\] 
These models belong to a family of type $\D$ constructions, which arise here because $S_4 \cong \WD_3$.
\end{example}

\begin{remark}\label{all-a-models-rmk}
Assume $n\geq 5$. Theorem~\ref{a-thm} tells us that 
if $\cM$ is a perfect model for $S_n$ then $\{\chi^\TT : \TT \in \cM\}$
contains exactly one of  $\one$ or $\sgn$.
We refer to $\cM$ as a \defn{$\one$-model} if $\one \in \{\chi^\TT : \TT \in \cM\}$
and as a \defn{$\sgn$-model} if $\sgn \in \{\chi^\TT : \TT \in \cM\}$. 
One can determine whether $\cM$ is a $\one$-model or a $\sgn$-model
by examining whether
\[
\sum_{\lambda \in \OddCols(n,n-2\lfloor\frac{n}{2}\rfloor)} \chi^\lambda \in \{\chi^\TT : \TT \in \cM\}
\quord
\sum_{\lambda \in \OddRows(n,n-2\lfloor\frac{n}{2}\rfloor)} \chi^\lambda \in \{\chi^\TT : \TT \in \cM\}.
\]
It will be useful to enumerate all the ways of specifying a perfect model for $S_n$,
since there is some redundancy in our notation.
To construct a $\sgn$-model for $S_n$, first
choose $\Phi_0$ to be one of
\be\label{m1-eq}
\left[\begin{smallmatrix} 
 n \\ 
 \id   \\ 
 \sgn
\end{smallmatrix}\right] 
\quord
\left[\begin{smallmatrix} 
 n \\ 
 \id^+   \\ 
 \sgn
\end{smallmatrix}\right] 
\ee
Then let $\Phi_1$ be one of
\be\label{m2-eq}
\left[\begin{smallmatrix} 
2 & n-2 \\ 
\star & \id   \\ 
\one & \sgn
\end{smallmatrix}\right] ,
\quad
\left[\begin{smallmatrix} 
2 & n-2 \\ 
\star & \id^+  \\ 
\one & \sgn
\end{smallmatrix}\right] ,
\quad
\left[\begin{smallmatrix} 
 n-2 & 2 \\ 
 \id  & \star   \\ 
\sgn & \one 
\end{smallmatrix}\right] ,
\quord
\left[\begin{smallmatrix} 
 n-2 & 2 \\ 
 \id^+  & \star   \\ 
\sgn & \one 
\end{smallmatrix}\right]
\ee
where each $\star$ is any symbol  $\{\id,\id^+,\fpf,\fpf^+\}$.
For $2 \leq k \leq \lceil n/2 \rceil -2$ define $\Phi_k$ to be one of
\be\label{m3-eq}
 \left[\begin{smallmatrix} 
2k & n-2k \\ 
\fpf^\star & \id^\star   \\ 
\one & \sgn
\end{smallmatrix}\right] 
\quord
 \left[\begin{smallmatrix} 
 n-2k & 2k \\ 
 \id^\star & \fpf^\star   \\ 
\sgn & \one 
\end{smallmatrix}\right]
\ee
where $\fpf^\star \in \{ \fpf,\fpf^+\}$ and $\id^\star \in \{\id,\id^+\}$ are arbitrary.
When $n$ is even, let $ \Phi_{n/2-1}$ be one of
\be\label{m4-eq}
\left[\begin{smallmatrix} 
 n-2& 2 \\ 
\fpf & \star   \\ 
\one & \sgn
\end{smallmatrix}\right] ,
\quad
\left[\begin{smallmatrix} 
 n-2& 2 \\ 
\fpf^+ & \star   \\ 
\one & \sgn
\end{smallmatrix}\right] ,
\quad
\left[\begin{smallmatrix} 
2 & n-2 \\ 
\star & \fpf    \\ 
\sgn & \one 
\end{smallmatrix}\right] ,
\quord
\left[\begin{smallmatrix} 
2 &  n-2 \\ 
\star & \fpf^+    \\ 
\sgn & \one 
\end{smallmatrix}\right]
\ee
where each $\star$ is any symbol  $\{\id,\id^+,\fpf,\fpf^+\}$,
and let 
$\Phi_{ n/2 }$ be one of 
\be\label{m5-eq}
\left[\begin{smallmatrix} 
 n \\ 
\fpf    \\ 
\one 
\end{smallmatrix}\right] 
\quord
\left[\begin{smallmatrix} 
 n\\ 
\fpf^+   \\ 
\one 
\end{smallmatrix}\right]. 
\ee
Finally, when $n$ is odd 
choose $\Phi_{(n-1)/2 }$ to be
 one of
\be\label{m6-eq}
\left[\begin{smallmatrix} 
 n-1& 1 \\ 
\fpf^\star & \id^\star   \\ 
\one & \one
\end{smallmatrix}\right] ,
\quad
\left[\begin{smallmatrix} 
 n-1& 1 \\ 
\fpf^\star & \id^\star   \\ 
\one & \sgn
\end{smallmatrix}\right] ,
\quad
\left[\begin{smallmatrix} 
1 &  n-1 \\ 
\id^\star & \fpf^\star    \\ 
\one & \one 
\end{smallmatrix}\right] ,
\quord
\left[\begin{smallmatrix} 
1 &  n-1 \\ 
\id^\star & \fpf^\star    \\ 
\sgn & \one 
\end{smallmatrix}\right]
\ee
where $\fpf^\star \in \{ \fpf,\fpf^+\}$ and $\id^\star \in \{\id,\id^+\}$ are arbitrary.
Then $\cM = \{ \TT^{\Phi_k}: k=0,1,\dots\lfloor n/2\rfloor\}$
is a $\sgn$-model for $S_n$ and every $\sgn$-model arises in this way.
The $\one$-models for $S_n$ are constructed in the same way after interchanging 
  ``$\one$'' and ``$\sgn$''
in \eqref{m1-eq}--\eqref{m6-eq}.
\end{remark}

\section{Model classification in type B}\label{b-sect}

In this section we fix an integer $n\geq 2$ and consider the Coxeter group $W = \W_n$ with generating set $S = \{s_0,s_1,\dots,s_{n-1}\}$. 
Recall that $\W_n$ is the subgroup of permutations $w \in S_\ZZ$
with $w(-i) = -w(i)$ for all $i \in \ZZ$ and $w(i) = i$ for all $i>n$. 
Our main result here is Theorem~\ref{b-thm}.

\subsection{Perfect conjugacy classes in type B}

The longest element in $\W_n$ is the central element $w_0 =\bar 1 \bar 2 \bar 3 \cdots \bar n$.
If $n=2$ then there is a single nontrivial Coxeter automorphism (interchanging $s_0$ and $s_1$) and otherwise
there are no such automorphisms.
Given integers $p,q> 0$ with $p+q=n$, 
let $\cK_{(p,q)}^{\W_n} $ be the set of $w\in \W_n$ with 
\[| \{ i \in [n] : w(i) = i\}| = p\quand |\{ i \in [n] : w(i) =-i\}| = q.\]
Let 
$\cK_{\id}^{\W_n} := \{ 1\}$ and $ \cK_{\id^+}^{\W_n}:= \{ w_0\}=\{w_0^+\}.$
When $n$ is even define $\cK^{\W_n}_{\fpf}$ to be the set of involutions $z=z^{-1} \in W_n$ with $|z(i)| \neq i$ for all $i \in [n]$.
The perfect conjugacy classes in $(\W_n)^+$ consist of $\cK^{\W_n}_{\id}$,  $\cK^{\W_n}_{\id^+}$, and $\cK^{\W_n}_{(p,q)}$
for all $p,q> 0$ with $p+q=n$,
along with $\cK^{\W_n}_\fpf$ when $n$ is even  \cite[Ex. 9.2]{RV}.
The unique minimal-length elements of $\cK_{(p,q)}^{\W_n}$ and $\cK^{\W_n}_{\fpf}$ (when $n$ is even) are  
\[\bar 1 \bar 2 \cdots \bar q (q+1)(q+2)\cdots n \quand s_1s_3s_5\cdots s_{n-1}.\]

\subsection{Model indices in type B}


The linear characters of $\W_n$ are given as follows.
Let $\one_{++} :=\one$ be the trivial character and let $\one_{--} := \sgn$ be the sign character.
Define $\one_{+-}$ to be the linear character of $\W_n$ mapping $s_0 \mapsto 1$ and $s_i \mapsto -1$ for $i>0$.
Define $\one_{-+} := \one_{+-}\sgn$ to be the linear character of $\W_n$ mapping $s_0 \mapsto -1$ and $s_i \mapsto 1$ for $i>0$.
If $n=1$ then $\one_{+-} = \one$ and $\one_{-+}  = \sgn$.
If $n\geq 2$ then these four linear characters are distinct.
If $n$ is even and $z = s_1s_3s_5\cdots s_{n-1} \in \cK^{\W_n}_{\fpf}$
then the centralizer subgroup $C_\fpf := \{  w\in \W_n : wz=zw\}$ 
does not contain $s_0$ so 
\[\Res^{\W_n}_{C_\fpf}(\one_{-+}) = \Res^{\W_n}_{C_\fpf}(\one)
\quand
\Res^{\W_n}_{C_\fpf}(\one_{+-}) = \Res^{\W_n}_{C_\fpf}(\sgn).
\]


Let $\MTI(\W_n)$ denote the set of $3\times 2$ arrays, to be called \defn{model indices} for $\W_n$, of the form
\[
\Theta = \left[\begin{smallmatrix} \alpha_0 & \alpha_1 \\ 
\beta_0& \beta_1  \\ 
\gamma_0 & \gamma_1 
\end{smallmatrix}\right]
\]
where $\alpha_0,\alpha_1 \geq 0$  are integers with $\alpha_0+\alpha_1=n$;
$\beta_0$ is a symbol in  $\{\id, \id^+,\fpf\}$ or a pair of positive integers $(p,q)$ with $p+q=n$;
$\beta_1$ is a symbol in  $\{\id, \id^+, \fpf, \fpf^+\}$;
and 
$\gamma_0 \in \{\one, \sgn, \one_{+-},\one_{-+}\}$ 
and $\gamma_1 \in \{\one, \sgn\}$ are linear characters of $\W_{\alpha_0}$ and $S_{\alpha_1}$.
We further require that:
\begin{itemize}
\item if  $\beta_0 =\fpf$ then $\alpha_0 \in \{0,2,4,6,\dots\}$
and if $\beta_1 \in \{ \fpf, \fpf^+\}$ then $\alpha_1 \in \{4,6,8\dots\}$;
\item  if $\alpha_0 \leq 1$ or $\beta_0 = \fpf$ then  $\gamma_0 \in \{\one,\sgn\}$.
\end{itemize}
Suppose $\Theta= \left[\begin{smallmatrix} \alpha_0 & \alpha_1 \\ 
\beta_0& \beta_1  \\ 
\gamma_0 & \gamma_1 
\end{smallmatrix}\right] \in \MTI(\W_n)$. Let $J_0 := \{ s_{j-1} : j \in [\alpha_0] \}$ and
$J_1 := \{ s_j :  \alpha_0 < j <   n \}.$
 Write $\varphi_1:  S_{\alpha_1 }^+ \to \langle J_1\rangle^+$  for the isomorphism
 sending $s_j \mapsto s_{\alpha_0 + j}$ for $j \in [\alpha_1-1]$.
Let $\cK_0 := \cK^{\W_{\alpha_0}}_{\beta_0}$ and let $\cK_1$ be the image of $\cK^{S_{\alpha_1}}_{\beta_1}$ under $\varphi_1$.
Now set 
\[ \TT^\Theta :=
\begin{cases} ( J_0, \cK_0, \gamma_0)& \text{if }\alpha_0=n, \\
( J_1, \cK_1, \gamma_1)& \text{if }\alpha_1=n, \\
( J_0, \cK_0, \gamma_0) \otimes( J_1, \cK_1, \gamma_1) &\text{otherwise}.
\end{cases}
 \]
In this way $\Theta$ indexes a factorizable model triple for $\W_n$. Also define
\[\chi_\BC^\Theta := \chi^{\TT^\Theta}.\]
Note that if $\alpha_i=0$ then $\TT^\Theta$ does not depend on $\beta_i$ or $\gamma_i$.
Moreover, if $\alpha_1 =1$ then
$\TT^\Theta$ is unaffected by changing $\beta_1=\id$ to $\id^+$ (or vice versa) or $\gamma_1=\one$ to $\sgn$ (or vice versa).
By Theorems~\ref{tech1-thm} and \ref{tech2-thm},
 every multiplicity-free model triple for $\W_n$ with $n\geq 2$ arises as $\TT^\Theta$ for some $\Theta \in \MTI(\W_n)$.\footnote{
 This is true, 
 even with our restrictions on $\gamma_0$ when $\beta_0=\fpf$, because 
 we
 do not distinguish between model triples $(J,\cK,\sigma_1)$ and $(J,\cK,\sigma_2)$
with $\Res^{W_J}_{C_J(z)} (\sigma_1) = \Res^{W_J}_{C_J(z)} (\sigma_2)$.
 }

Given $\Theta = \left[\begin{smallmatrix} \alpha_0  &\alpha_1 \\ 
\beta_0  &\beta_1 \\ 
\gamma_0 &\gamma_1 
\end{smallmatrix}\right] \in \MTI(\W_n)$,
define $\Theta^\vee := \left[\begin{smallmatrix} \alpha_0  &\alpha_1  \\ 
\beta^\vee_0  &\beta^\vee_1  \\ 
\gamma_0 &\gamma_1 
\end{smallmatrix}\right]$ and $\overline\Theta := \left[\begin{smallmatrix} \alpha_0  &\alpha_1   \\ 
\beta_0  &\beta_1   \\ 
\bar\gamma_0 &\bar\gamma_1
\end{smallmatrix}\right]$ 
where 
\[ \beta^\vee_i:=\begin{cases} (q,p) &\text{if $i=0$ and $\beta_0  =(p,q)$} 
\\
\fpf &\text{if $i=0$ and $\beta_0=\fpf$, or if $\beta_i = \fpf^+$} \\
\fpf^+ &\text{if $i=1$ and $\beta_i = \fpf$} \\
\id &\text{if $\beta_i = \id^+$} \\
  \id^+ &\text{if $\beta_i = \id$} \end{cases}
  \quand
  \bar \gamma_i:=\sgn   \gamma_i.\]
We adapt the relations $\equiv$, $\sim$, and $\approx$ to (sets of) model indices in $\MTI(\W_n)$
just as we did for elements of $\MTI(S_n)$.
%
%
It is straightforward to check that
$\TT^{\Theta^\vee} = (\TT^\Theta)^\vee$ and  $\TT^{\overline\Theta} = \overline{\TT^\Theta}$.
 Likewise, if $ \Theta'$ is formed from $\Theta$ by changing any entries equal to $\id^+$  to $\id$,
 then $\Theta \equiv \Theta'$.
%

\subsection{Littlewood-Richardson coefficients in type B}\label{LR-bc-sect}


The irreducible characters of $\W_n$ are indexed by \defn{bipartitions} of $n$,
that is, by ordered pairs of partitions $(\lambda,\mu)$ 
with $|\lambda| + |\mu| = n$.
To indicate that   $(\lambda,\mu)$ is a bipartition of $n$ we write  $(\lambda,\mu)\vdash n$.
Let $\chi^{(\lambda,\mu)} $ denote the irreducible character of $\W_n$ indexed by $(\lambda,\mu) \vdash n$ following the 
construction given  in \cite[\S5.5]{GeckP}.
Then, as explained before \cite[Lem. 5.5.5]{GeckP},
\be
\ba \one =\one_{++} &= \chi^{((n),\emptyset)} \\
\one_{-+} &= \chi^{(\emptyset,(n))}
\ea
\qquand
\ba
\sgn=\one_{--} &= \chi^{(\emptyset, (1,1,\dots,1))} \\
\one_{+-} &= \chi^{((1,1,\dots,1),\emptyset)}.
\ea
\ee
By  \cite[Thm. 5.5.6]{GeckP} one also has 
 \be
 \chi^{(\lambda,\mu)} \sgn =  \chi^{(\mu^\top,\lambda^\top)}
,
\quad
 \chi^{(\lambda,\mu)} \one_{-+} =  \chi^{(\mu,\lambda)}
,\quand
 \chi^{(\lambda,\mu)} \one_{+-}=  \chi^{(\lambda^\top,\mu^\top)}
.\ee



Let $p ,q\in \NN$.
We identify $\W_p\times \W_q$   in the usual way (see \cite[\S5.5]{GeckP}) with the subgroup
of permutations in $\W_{p+q}$ fixing  $\{ \pm 1, \pm2 ,\dots,\pm p\}$
and $\{ \pm (p+1), \pm(p+2),\dots,\pm (p+q)\}$.
%
Write $u\times v$ for the image of $(u,v)  \in \W_p\times \W_q$ in $\W_{p+q}$.
Given  $f : \W_p \to \CC$ and $g : \W_q \to \CC$
 define $f\boxtimes g : \W_p\times \W_q \to \CC$ by the formula $u\times v \mapsto f(u)g(v)$.
When $f$ and $g$ are class functions, let
\be f\bullet_\BC g := \Ind_{\W_p\times \W_q}^{\W_{p+q}}(f\boxtimes g)
.\ee
This is an associative, commutative, and bilinear operation.
If $\Theta = \left[\begin{smallmatrix}  \alpha_0& \alpha_1 
 \\ 
\beta_0 & \beta_1  
 \\ 
\gamma_0 & \gamma_1 
\end{smallmatrix}\right]\in \MTI(\W_n)$ then 
\be\label{theta-bullet-bc-eq}
\chi_\BC^\Theta = \chi_\BC^{ \left[\begin{smallmatrix} \alpha_0  \\ 
\beta_0  \\ 
\gamma_0  
\end{smallmatrix}\right]}
\bullet_\BC 
\Ind_{S_{\alpha_1}}^{\W_{\alpha_1}} \Big(\chi_\A^{ \left[\begin{smallmatrix} \alpha_1  \\ 
\beta_1   \\ 
\gamma_1 
\end{smallmatrix}\right]}\Bigr)
\quad\text{where we define $\chi_\BC^{ \left[\begin{smallmatrix} \alpha_0  \\ 
\beta_0  \\ 
\gamma_0  
\end{smallmatrix}\right]} := \chi_\BC^{ \left[\begin{smallmatrix} \alpha_0 & 0  \\ 
\beta_0  & \id \\ 
\gamma_0  & \one
\end{smallmatrix}\right]}
$}\ee
and where we interpret the characters on the right as the trivial characters of 
$\W_0:=\{1\}$ or $S_0:=\{1\}$
 when $\alpha_0=0$ or $\alpha_1=0$.
 
 %
If $(\lambda_1,\lambda_2)$ and $(\mu_1,\mu_2)$ are bipartitions then by \cite[Lem. 6.1.3]{GeckP} we have
 \be\label{LR-bc-eq}
\chi^{(\lambda_1,\lambda_2)}\bullet_\BC
\chi^{(\mu_1,\mu_2)}
=
\sum_{\nu_1,\nu_2} c_{\lambda_1\mu_1}^{\nu_1} c_{\lambda_2\mu_2}^{\nu_2} \chi^{(\nu_1,\nu_2)}
\ee
where  $c_{\lambda\mu}^\nu \in \NN$ are the Littlewood-Richard coefficients from Section~\ref{LR-sect1}.
In addition, one has
\be\label{LR-ba-eq}
\Ind_{S_n}^{\W_n}(\chi^\nu) = \sum_{\lambda,\mu} c_{\lambda\mu}^\nu \chi^{(\lambda,\mu)}
\ee
by \cite[Lem. 6.1.4]{GeckP}.
Thus, the Pieri rules for $S_n$ imply that 
\be\label{pieri-bc-eq}
 \Ind_{S_n}^{\W_n}(\one) = \sum_{p+q=n} \chi^{((p),(q))}
 \quand
\Ind_{S_n}^{\W_n}(\sgn) = \sum_{p+q=n} \chi^{((1^p),(1^q))}.\ee

Let $\EvenBiRows(n)$ be the set of bipartitions $(\lambda,\mu)\vdash n$
where $\lambda$ and $\mu$ both have all even parts.
Define $\EvenBiCols(n) = \{ (\lambda^\top, \mu^\top) : (\lambda,\mu) \in \EvenBiRows(n)\}$. 
If $n$ is even then
\be\label{bbb-prop1}
\ds\chi_\BC^{ \left[\begin{smallmatrix} n  \\ 
\fpf  \\ 
\one  
\end{smallmatrix}\right]}
= \sum_{(\lambda,\mu) \in \EvenBiRows(n)} \chi^{(\lambda,\mu)}
\quand
\ds\chi_\BC^{ \left[\begin{smallmatrix} n   \\ 
\fpf  \\ 
\sgn  
\end{smallmatrix}\right]}
= \sum_{(\lambda,\mu) \in \EvenBiCols(n)} \chi^{(\lambda,\mu)}
\ee
by \cite[Prop. 1]{Baddeley}. We note one other character formula for use in the next section:


\begin{proposition}\label{bbb-prop2}
Suppose $p,q>0$ are integers with $p+q=n$.
Define $\Lambda=\Lambda(p,q)$ to be the set of partitions of the form
$ \lambda = (\max\{p,q\}+r, \min\{p,q\}-r)$ for $0 \leq r \leq \min\{p,q\}$.
Then 
\[
\ds\chi_\BC^{ \left[\begin{smallmatrix} n   \\ 
(p,q)  \\ 
\one  
\end{smallmatrix}\right]}
= \sum_{\lambda \in \Lambda} \chi^{(\lambda,\emptyset)}
,\quad
\ds\chi_\BC^{ \left[\begin{smallmatrix} n   \\ 
(p,q)  \\ 
\one_{-+}  
\end{smallmatrix}\right]}
= \sum_{\lambda \in \Lambda} \chi^{(\emptyset,\lambda)}
,
\quad
\ds\chi_\BC^{ \left[\begin{smallmatrix} n   \\ 
(p,q)  \\ 
\sgn  
\end{smallmatrix}\right]}
= \sum_{\lambda \in \Lambda} \chi^{(\emptyset,\lambda^\top)}
,
\quad
\chi_\BC^{ \left[\begin{smallmatrix} n   \\ 
(p,q)  \\ 
\one_{-+}  
\end{smallmatrix}\right]}
= \sum_{\lambda \in \Lambda} \chi^{(\lambda^\top,\emptyset)}
.\]
\end{proposition}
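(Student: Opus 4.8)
The plan is to reduce everything to Frobenius reciprocity together with the Littlewood--Richardson rule \eqref{LR-bc-eq} and the Pieri rules recalled in Section~\ref{LR-sect1}. The starting point is to identify the relevant centralizer: the unique minimal-length element of $\cK_{(p,q)}^{\W_n}$ is $z := \bar 1\bar 2\cdots\bar q(q+1)\cdots n$, the diagonal signed permutation negating the coordinates $1,\dots,q$ and fixing $q+1,\dots,n$, and an element $w\in\W_n$ commutes with $z$ precisely when the underlying permutation of $w$ stabilizes the set partition $\{1,\dots,q\}\sqcup\{q+1,\dots,n\}$. Hence the centralizer $C_{\W_n}(z)$ equals $\W_q\times\W_p$, a block subgroup of the kind appearing in the definition of $\bullet_\BC$, with the first factor $\W_q$ acting on $\{\pm1,\dots,\pm q\}$ and the second factor $\W_p$ on $\{\pm(q+1),\dots,\pm n\}$. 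Combining this with \eqref{chTT-eq} and \eqref{theta-bullet-bc-eq} gives, for each linear character $\gamma$ of $\W_n$,
\[
\chi_\BC^{\left[\begin{smallmatrix} n \\ (p,q) \\ \gamma\end{smallmatrix}\right]}
=\Ind_{\W_q\times\W_p}^{\W_n}\Res^{\W_n}_{\W_q\times\W_p}(\gamma).
\]

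Next I would compute the restriction of $\gamma$ to $\W_q\times\W_p$. Since $n\geq2$, the group $\W_n$ has exactly two conjugacy classes of reflections --- the ``$s_0$-type'' sign changes and the ``$s_1$-type'' transpositions --- so a linear character of $\W_n$ is determined by its values on $s_0$ and $s_1$, and likewise for $\W_q$ and $\W_p$. Every standard generator of the first factor $\W_q\subseteq\W_n$ is itself a standard generator of $\W_n$ of the same type, while the standard generators of the second factor $\W_p$ (namely the transpositions $s_{q+1},\dots,s_{n-1}$ and the sign change on coordinate $q+1$) are $\W_n$-conjugate to $s_1$ and to $s_0$, respectively. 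As $\gamma$ is a class function, it follows that $\Res^{\W_n}_{\W_q\times\W_p}(\gamma)=\gamma_q\boxtimes\gamma_p$, where $\gamma_m$ denotes the linear character of $\W_m$ taking the same values on $s_0$ and $s_1$ as $\gamma$; explicitly $\one\mapsto\one\boxtimes\one$, $\one_{-+}\mapsto\one_{-+}\boxtimes\one_{-+}$, $\sgn\mapsto\sgn\boxtimes\sgn$, and $\one_{+-}\mapsto\one_{+-}\boxtimes\one_{+-}$.

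To finish, I would substitute the bipartition labels $\one=\chi^{((m),\emptyset)}$, $\one_{-+}=\chi^{(\emptyset,(m))}$, $\sgn=\chi^{(\emptyset,(1^m))}$, and $\one_{+-}=\chi^{((1^m),\emptyset)}$ of the linear characters of $\W_m$ and apply \eqref{LR-bc-eq}. For $\gamma=\one$ this yields $\chi^{((q),\emptyset)}\bullet_\BC\chi^{((p),\emptyset)}=\sum_\nu c_{(q)(p)}^{\nu}\,\chi^{(\nu,\emptyset)}$, and the Pieri rule identifies $\{\nu : c_{(q)(p)}^{\nu}=1\}$ with exactly the set $\Lambda=\Lambda(p,q)$; the cases $\gamma=\one_{-+}$ and $\gamma=\one_{+-}$ then follow by applying the sign-twisting bijections $\chi^{(\lambda,\mu)}\mapsto\chi^{(\mu,\lambda)}$ and $\chi^{(\lambda,\mu)}\mapsto\chi^{(\lambda^\top,\mu^\top)}$ (using the transpose symmetry $c^{\nu}_{(1^q)(1^p)}=c^{\nu^\top}_{(q)(p)}$), while the case $\gamma=\sgn$ is their composite.

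Everything after the first step is a routine application of already-quoted identities; I expect the only delicate point to be the centralizer/restriction step, specifically the observation that the ``sign-change generator'' of the second factor $\W_p$ is a nontrivial $\W_n$-conjugate of $s_0$ rather than one of the simple transpositions $s_i$. Handling the restriction through reflection conjugacy classes, as above, is what lets one avoid any explicit length or parity bookkeeping.
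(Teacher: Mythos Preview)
Your argument is correct and follows essentially the same route as the paper's proof: identify the centralizer of the minimal element $\bar 1\cdots\bar q(q+1)\cdots n$ as $\W_q\times\W_p$, observe that each linear character $\gamma$ of $\W_n$ restricts to $\gamma\boxtimes\gamma$ on this subgroup, and then apply \eqref{LR-bc-eq} together with the Pieri rules. The paper compresses all of this into two sentences, whereas you spell out the restriction step via reflection conjugacy classes and make the Pieri computation explicit, but the underlying logic is identical.
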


\begin{proof}
The centralizer of  
$\bar 1 \bar 2 \cdots \bar q (q+1)(q+2)\cdots n\in\cK_{(p,q)}^{\W_n}$ is
 $\W_q\times \W_p$ and the restriction of any linear character $\sigma$ of $\W_n$
 to this subgroup is $\sigma \boxtimes \sigma$. The result then follows from \eqref{LR-bc-eq}.
\end{proof}

\subsection{Model projections in type B}

 We define two maps $\piL,\piR: \MTI(\W_n) \to \MTI(S_n)\sqcup\{0\}$.
Let $\Theta = \left[\begin{smallmatrix} \alpha_0 & \alpha_1 \\ 
\beta_0& \beta_1\\ 
\gamma_0 & \gamma_1
\end{smallmatrix}\right] \in \MTI(\W_n)$.
Let $(\lambda,\mu) \in \{((n),\emptyset),(\emptyset,(n)),((1^n),\emptyset),(\emptyset,(1^n))\}$
be such that $\gamma_0 = \chi^{(\lambda,\mu)}$.
If $\alpha_0=0$ then set 
\[
 \piL(\Theta) = \piR(\Theta) :=  \left[\begin{smallmatrix} \alpha_1  \\ 
 \beta_1    \\ 
 \gamma_1  
\end{smallmatrix}\right].
\]
Assume $\alpha_0,\alpha_1>0$.
If $\alpha_0 \in \{2,4,6,\dots\}$ and $\beta_0=\fpf$, then 
$\gamma_0 \in \{ \one,\sgn\}$ and we set 
 \[\piL(\Theta) = \piR(\Theta) := \Theta= \left[\begin{smallmatrix} \alpha_0 & \alpha_1 \\ 
\beta_0& \beta_1\\ 
\gamma_0 & \gamma_1
\end{smallmatrix}\right]  \in \MTI(S_n).\]
If $\beta_0 \in \{\id,\id^+\}$ 
 then define 
\[
\piL(\Theta) :=
\begin{cases}
 \left[\begin{smallmatrix}\alpha_0& \alpha_1  \\ 
\beta_0& \beta_1\\ 
\chi^\lambda &  \gamma_1   
\end{smallmatrix}\right]
&\text{if }\mu=\emptyset \\
0&\text{if }\mu \neq \emptyset
\end{cases}
\quand
\piR(\Theta) :=
\begin{cases}
 \left[\begin{smallmatrix}\alpha_0& \alpha_1   \\ 
\beta_0& \beta_1 \\ 
\chi^\mu &  \gamma_1 
\end{smallmatrix}\right]
&\text{if }\lambda=\emptyset \\
0&\text{if }\lambda \neq \emptyset.
\end{cases}
\]
If $\beta_0 = (p,q)$ for positive integers $p$ and $q$ with $p+q=\alpha_0$ then define
\[
\piL(\Theta) :=
\begin{cases}
 \left[\begin{smallmatrix}p & q& \alpha_1  \\ 
\id &\id & \beta_1   \\ 
\chi^\lambda & \chi^\lambda &  \gamma_1 
\end{smallmatrix}\right]
&\text{if }\mu=\emptyset \\
0&\text{if }\mu \neq \emptyset
\end{cases}
\quand
\piR(\Theta) :=
\begin{cases}
 \left[\begin{smallmatrix}p & q& \alpha_1   \\ 
\id & \id & \beta_1  \\ 
\chi^\mu & \chi^\mu &  \gamma_1   
\end{smallmatrix}\right]
&\text{if }\lambda=\emptyset \\
0&\text{if }\lambda \neq \emptyset.
\end{cases}
\]
When $\alpha_0>0$ but $\alpha_1=0$,
we form $\piL(\Theta)$ and $\piR(\Theta)$ by
applying the same formulas as above, and then deleting the last column
if the result is nonzero.

Let $\cR^\A_n$ and $\cR^\BC_n$ denote the $\CC$-vector spaces of complex-valued class functions on $S_n$ and $\W_n$,
respectively, 
and set $\cR^\A := \bigoplus_{n \in \NN} \cR^\A_n$ and $\cR^\BC := \bigoplus_{n \in \NN} \cR^\BC_n$.
We use the same symbols $\piL$ and $\piR$ to denote the linear maps $\cR^\BC \to \cR^\A$ 
with 
\[\piL(\chi^{(\lambda,\mu)}) := \begin{cases} \chi^\lambda &\text{if }\mu=\emptyset \\
0&\text{if }\mu\neq\emptyset
\end{cases}
\quand
\piR(\chi^{(\lambda,\mu)}) := \begin{cases} \chi^\mu &\text{if }\lambda=\emptyset \\
0&\text{if }\lambda\neq\emptyset
\end{cases}
\]
for all bipartitions $(\lambda,\mu)$. Finally, set $\chi_\A^0 := 0 \in \cR^\A$.

\begin{lemma}\label{bca-pi-lem}
If $\Theta  \in \MTI(\W_n)$ then $\piL(\chi_\BC^\Theta) = \chi_\A^{\piL(\Theta)}$
and
$\piR(\chi_\BC^\Theta) = \chi_\A^{\piR(\Theta)}$.
\end{lemma}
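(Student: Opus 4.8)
The plan is to reduce the identity $\piL(\chi_\BC^\Theta) = \chi_\A^{\piL(\Theta)}$ (and the symmetric one for $\piR$) to the factorization \eqref{theta-bullet-bc-eq} together with two building blocks: first, the behavior of $\piL$ on products $\bullet_\BC$, and second, explicit evaluations of $\piL$ on the one-column characters $\chi_\BC^{\left[\begin{smallmatrix}\alpha_0\\ \beta_0\\ \gamma_0\end{smallmatrix}\right]}$ and on $\Ind_{S_{\alpha_1}}^{\W_{\alpha_1}}\bigl(\chi_\A^{\left[\begin{smallmatrix}\alpha_1\\ \beta_1\\ \gamma_1\end{smallmatrix}\right]}\bigr)$. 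For the first block, the key algebraic fact is that $\piL$ (resp.\ $\piR$) is multiplicative in the appropriate sense: using \eqref{LR-bc-eq} and the Pieri-type vanishing, if $f \in \cR^\BC_p$ and $g \in \cR^\BC_q$ then $\piL(f \bullet_\BC g) = \piL(f)\bullet_\A \piL(g)$, because the Littlewood-Richardson expansion \eqref{LR-bc-eq} shows the coefficient of $\chi^{(\nu_1,\emptyset)}$ in $\chi^{(\lambda_1,\lambda_2)}\bullet_\BC \chi^{(\mu_1,\mu_2)}$ is nonzero only when $\lambda_2 = \mu_2 = \emptyset$, in which case it is exactly $c^{\nu_1}_{\lambda_1\mu_1}$; the analogous statement holds for $\piR$. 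This reduces everything to one factor at a time.

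Next I would handle the two types of factors on the right-hand side of \eqref{theta-bullet-bc-eq}. For the $S$-type factor: since $\Ind_{S_{\alpha_1}}^{\W_{\alpha_1}}(\chi^\nu) = \sum_{\lambda,\mu} c^\nu_{\lambda\mu}\chi^{(\lambda,\mu)}$ by \eqref{LR-ba-eq}, applying $\piL$ kills every term with $\mu \neq \emptyset$ and the surviving term with $\mu = \emptyset$ has $\lambda = \nu$ with coefficient $c^\nu_{\emptyset,\emptyset} = 1$; hence $\piL \circ \Ind_{S_{\alpha_1}}^{\W_{\alpha_1}} = \id$ on $\cR^\A_{\alpha_1}$, and likewise $\piR\circ\Ind_{S_{\alpha_1}}^{\W_{\alpha_1}} = \operatorname{(\text{apply } \sgn)}$— more precisely $\piR(\Ind_{S_{\alpha_1}}^{\W_{\alpha_1}}(\chi^\nu)) = \chi^{\nu^\top}$, matching the fact that $\piR(\Theta)$ records $\chi^\mu$ where $\gamma_0 = \chi^{(\lambda,\mu)}$ and that $\chi^{(\emptyset,(n))}$, etc., transposes accordingly. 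For the $\W$-type factor, one checks case-by-case on $\beta_0 \in \{\id,\id^+,\fpf\}\cup\{(p,q)\}$ that $\piL\bigl(\chi_\BC^{\left[\begin{smallmatrix}\alpha_0\\ \beta_0\\ \gamma_0\end{smallmatrix}\right]}\bigr) = \chi_\A^{\piL(\Theta)\text{-factor}}$: when $\gamma_0 = \chi^{(\lambda,\mu)}$ with $\mu \neq \emptyset$ the character $\chi_\BC^{\left[\begin{smallmatrix}\alpha_0\\ \beta_0\\ \gamma_0\end{smallmatrix}\right]}$ has all its constituents of the form $\chi^{(\cdot,\nu)}$ with $\nu \neq \emptyset$ (visible from \eqref{bbb-prop1}, Proposition~\ref{bbb-prop2}, and \eqref{pieri-bc-eq}), so $\piL$ annihilates it, matching $\piL(\Theta) = 0$; when $\mu = \emptyset$, the displayed formulas give exactly the corresponding type-$\A$ character (for $\beta_0 = \fpf$ one uses that $\piL$ applied to \eqref{bbb-prop1} picks out $\sum_{\lambda\in\EvenRows}\chi^\lambda = \chi_\A^{\left[\begin{smallmatrix}\alpha_0\\\fpf\\\one\end{smallmatrix}\right]}$; for $\beta_0=(p,q)$ one uses Proposition~\ref{bbb-prop2} together with the Pieri rules to see $\sum_{\lambda\in\Lambda(p,q)}\chi^\lambda = \chi^{(p)}\bullet_\A\chi^{(q)}$, which is precisely $\chi_\A$ of the two-column type-$\A$ index appearing in the definition of $\piL(\Theta)$).

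Assembling these pieces: apply $\piL$ to \eqref{theta-bullet-bc-eq}, use multiplicativity to distribute over $\bullet_\BC$, substitute the two factor evaluations, and observe that the result is exactly $\chi_\A^{\piL(\Theta)}$ as read off from the definition of $\piL(\Theta)$ (including the bookkeeping that when $\alpha_1 = 0$ the last column is deleted, which corresponds to the factor $\chi_\A^{\left[\begin{smallmatrix}\alpha_1\\\beta_1\\\gamma_1\end{smallmatrix}\right]}$ becoming the trivial character of $S_0$ and hence an identity for $\bullet_\A$, and when $\alpha_0 = 0$ the $\W$-factor is trivial). The symmetric argument with $\piR$ and the transposing linear characters gives the second identity. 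I expect the main obstacle to be the case analysis for the $\W$-type factor with $\beta_0 = (p,q)$: one must verify carefully that $\sum_{\lambda \in \Lambda(p,q)}\chi^\lambda$ really equals $\chi^{(p)}\bullet_\A\chi^{(q)}$ (a one-line Pieri computation, but easy to mis-state) and that this matches the two-column index $\left[\begin{smallmatrix}p & q\\ \id & \id\\ \chi^\lambda & \chi^\lambda\end{smallmatrix}\right]$ that the definition of $\piL$ produces, keeping the linear-character labels consistent across the transpose identities for $\one_{\pm\mp}$.
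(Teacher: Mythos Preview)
Your overall strategy matches the paper's proof: both use a multiplicativity property of $\piL$ with respect to $\bullet_\BC$ and $\bullet_\A$ (the paper proves the slightly more direct form $\piL(\chi \bullet_\BC \Ind_{S_n}^{\W_n}(\psi)) = \piL(\chi)\bullet_\A \psi$, which is equivalent to your full multiplicativity together with $\piL\circ\Ind_{S_n}^{\W_n}=\id$), then reduce via \eqref{theta-bullet-bc-eq} to the one-column case $\Theta=\left[\begin{smallmatrix}\alpha_0\\\beta_0\\\gamma_0\end{smallmatrix}\right]$, which is checked against the explicit formulas in \eqref{bbb-prop1} and Proposition~\ref{bbb-prop2}.

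There is, however, a concrete error in your treatment of $\piR$. You assert that $\piR\bigl(\Ind_{S_{\alpha_1}}^{\W_{\alpha_1}}(\chi^\nu)\bigr) = \chi^{\nu^\top}$, i.e.\ that $\piR\circ\Ind$ introduces a transpose. This is false: from \eqref{LR-ba-eq} one has $\Ind_{S_n}^{\W_n}(\chi^\nu)=\sum_{\lambda,\mu}c^\nu_{\lambda\mu}\chi^{(\lambda,\mu)}$, and applying $\piR$ retains only the terms with $\lambda=\emptyset$; since $c^\nu_{\emptyset\mu}=\delta_{\mu,\nu}$, this gives $\piR\bigl(\Ind_{S_n}^{\W_n}(\chi^\nu)\bigr)=\chi^\nu$, exactly as for $\piL$. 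No transpose appears, and none is needed: in the definition of $\piR(\Theta)$ the second column is $\left[\begin{smallmatrix}\alpha_1\\\beta_1\\\gamma_1\end{smallmatrix}\right]$ unchanged, just as for $\piL(\Theta)$. The argument for $\piR$ is therefore literally symmetric to that for $\piL$ (with the roles of the two entries of a bipartition swapped), and your attempted asymmetry is a mistake. Once this is corrected, your proof is essentially the paper's.
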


\begin{proof}
One has $c^{\emptyset}_{\lambda\mu} =0$ if $\lambda$ or $\mu$ is nonempty and $c^\emptyset_{\emptyset\emptyset}=1$.
Likewise, one has $c^\nu_{\lambda\emptyset}=0$ if $\lambda\neq \nu$ and $c^{\nu}_{\nu\emptyset}= 1$.
It follows from these observations via \eqref{LR-bc-eq} and \eqref{LR-ba-eq} that $\piL(\chi \bullet_\BC \Ind_{S_n}^{\W_n}(\psi)) = \piL(\chi) \bullet_\A \psi$
for all $\chi \in \cR^\BC$ and $\psi \in \cR^\A_n$.
In view of this identity and \eqref{theta-bullet-eq} and \eqref{theta-bullet-bc-eq},
we see that to show $\piL(\chi_\BC^\Theta) = \chi_\A^{\piL(\Theta)}$
for all $\Theta  \in \MTI(\W_n)$
it suffices to prove this identity when $\Theta = \left[\begin{smallmatrix} \alpha_0  \\ 
\beta_0 \\ 
\gamma_0 
\end{smallmatrix}\right] $ as in \eqref{bbb-prop1} and Proposition~\ref{bbb-prop2}.
But this follows immediately from the definition of $\piL$ and Proposition~\ref{one-col-a-prop}.
The proof that $\piR(\chi_\BC^\Theta) = \chi_\A^{\piR(\Theta)}$ is similar.
\end{proof}

\subsection{Perfect models in type B}

Fix a model index $\Theta =\left[\begin{smallmatrix} 
\alpha_0 & \alpha_1 \\ 
\beta_0 & \beta_1   \\ 
\gamma_0 & \gamma_1
\end{smallmatrix}\right] \in \MTI(\W_n)$

\begin{lemma}\label{b-lem}
The character $\chi_\BC^\Theta$ is not multiplicity-free if any of the following conditions hold:
\ben
\item[(a)] $\alpha_1 \in \{4,6,8,\dots\}$ and $\beta_1\in \{\fpf,\fpf^+\}$.
\item[(b)] $\alpha_0 \in \{2,4,6,\dots\}$, $\beta_0 =\fpf$, $\alpha_1\geq 2$, and $\gamma_0=\gamma_1 \in \{\one,\sgn\}$.
\item[(c)] $\beta_0 = (p,q)$ for some $p,q>0$ with $p+q=\alpha_0$ and $\alpha_1>0$. 
\een
\end{lemma}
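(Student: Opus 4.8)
The plan is to prove each part by exhibiting an irreducible character of $\W_n$ that occurs in $\chi_\BC^\Theta$ with multiplicity at least two. The central tool is the factorization $\chi_\BC^\Theta=\chi_\BC^{\left[\begin{smallmatrix}\alpha_0\\\beta_0\\\gamma_0\end{smallmatrix}\right]}\bullet_\BC\Ind_{S_{\alpha_1}}^{\W_{\alpha_1}}\!\bigl(\chi_\A^{\left[\begin{smallmatrix}\alpha_1\\\beta_1\\\gamma_1\end{smallmatrix}\right]}\bigr)$ from \eqref{theta-bullet-bc-eq}, together with two elementary observations. First, since $\bullet_\BC$ is commutative and bilinear with values in characters, if $f\bullet_\BC g$ is a character of $\W_n$ in which one factor is not multiplicity-free and the other is nonzero, then $f\bullet_\BC g$ is not multiplicity-free: writing the bad factor as $\sum a_i\psi_i$ with some $a_j\ge 2$, the product dominates $a_j(\psi_j\bullet_\BC(\text{other factor}))$ coefficientwise, and this last character is nonzero by \eqref{LR-bc-eq}. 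Second, if $\chi\in\cR^\BC$ is a character and $\piL(\chi)$ or $\piR(\chi)$ fails to be multiplicity-free, then so does $\chi$; this is immediate from the definition of the projections, and by Lemma~\ref{bca-pi-lem} it lets us transport non-multiplicity-freeness down to type $\A$. Finally, I will use that $\chi_\BC^{\overline\Theta}=\chi_\BC^\Theta\cdot\sgn$ and that twisting by $\sgn$ permutes $\Irr(\W_n)$, so I may freely normalize the relevant $\gamma_i$ to equal $\one$.

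\textbf{Part (a).} After a $\sgn$-twist assume $\gamma_1=\one$, so $\chi_\A^{\left[\begin{smallmatrix}\alpha_1\\\beta_1\\\one\end{smallmatrix}\right]}=\sum_{\lambda\in\EvenRows(\alpha_1)}\chi^\lambda$ by \eqref{irs-eq}; since $\alpha_1\ge 4$ this contains both $\chi^{(\alpha_1)}$ and $\chi^{(\alpha_1-2,2)}$. By \eqref{LR-ba-eq} and the Pieri rule, $\Ind_{S_{\alpha_1}}^{\W_{\alpha_1}}$ of each of these contains $\chi^{((\alpha_1-2),(2))}$, the relevant Littlewood--Richardson coefficients $c_{(\alpha_1-2)(2)}^{(\alpha_1)}$ and $c_{(\alpha_1-2)(2)}^{(\alpha_1-2,2)}$ both being $1$. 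Hence $\Ind_{S_{\alpha_1}}^{\W_{\alpha_1}}\!\bigl(\chi_\A^{\left[\begin{smallmatrix}\alpha_1\\\beta_1\\\one\end{smallmatrix}\right]}\bigr)$ is not multiplicity-free, and the first observation finishes this case regardless of $\alpha_0$, $\beta_0$, $\gamma_0$.

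\textbf{Part (c).} Write $\gamma_0=\chi^{(\lambda,\mu)}$; exactly one of $\lambda$, $\mu$ is empty, so exactly one of $\piL(\Theta)$, $\piR(\Theta)$ is nonzero, and by the projection formula in the case $\beta_0=(p,q)$ that one is a model index for $S_n$ with three columns whose first-row entries are $p$, $q$, $\alpha_1$ --- all positive because $p,q>0$ and $\alpha_1>0$, so no column deletion occurs. By the first assertion of Lemma~\ref{a-lem} (which rests on Theorem~\ref{tech1-thm}) its type-$\A$ character is not multiplicity-free, and Lemma~\ref{bca-pi-lem} together with the second observation then shows $\chi_\BC^\Theta$ is not multiplicity-free.

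\textbf{Part (b).} After a $\sgn$-twist assume $\gamma_0=\gamma_1=\one$. If $\alpha_0\ge 4$ the projection strategy of part (c) applies: the formula for $\piL$ when $\beta_0=\fpf$ gives $\piL(\Theta)=\left[\begin{smallmatrix}\alpha_0&\alpha_1\\\fpf&\beta_1\\\one&\one\end{smallmatrix}\right]\in\MTI(S_n)$, whose character is not multiplicity-free by Lemma~\ref{a-lem}(a) (here $\alpha_0\in\{4,6,8,\dots\}$, $\alpha_1\ge 2$, and the two linear characters agree), and Lemma~\ref{bca-pi-lem} transfers this. The remaining case $\alpha_0=2$ is the main obstacle: the projection now lands in a \emph{multiplicity-free} two-column type-$\A$ index, so Lemma~\ref{a-lem} gives nothing and one must argue at the level of $\W_n$. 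Here \eqref{bbb-prop1} yields $\chi_\BC^{\left[\begin{smallmatrix}2\\\fpf\\\one\end{smallmatrix}\right]}=\chi^{((2),\emptyset)}+\chi^{(\emptyset,(2))}=\one+\one_{-+}$, while $\chi_\A^{\left[\begin{smallmatrix}\alpha_1\\\beta_1\\\one\end{smallmatrix}\right]}$ always contains $\one$; hence $\chi_\BC^\Theta$ dominates coefficientwise $(\one+\one_{-+})\bullet_\BC\Ind_{S_{\alpha_1}}^{\W_{\alpha_1}}(\one)=(\one+\one_{-+})\bullet_\BC\sum_{a+b=\alpha_1}\chi^{((a),(b))}$ by \eqref{pieri-bc-eq}. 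Expanding the right-hand side with \eqref{LR-bc-eq} and the Pieri rule, and recalling that $\one=\chi^{((2),\emptyset)}$ and $\one_{-+}=\chi^{(\emptyset,(2))}$ for $\W_2$, one checks that $\chi^{((\alpha_1),(2))}$ occurs there with multiplicity at least two --- once inside $\one\bullet_\BC\chi^{((\alpha_1-2),(2))}$ and once inside $\one_{-+}\bullet_\BC\chi^{((\alpha_1),\emptyset)}$ --- both contributions making sense because $\alpha_1\ge 2$. This shows $\chi_\BC^\Theta$ is not multiplicity-free in every case, completing the proof.
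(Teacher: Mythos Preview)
Your proof is correct. Parts (a) and (c) match the paper's argument almost exactly: for (a) you both show $\Ind_{S_{\alpha_1}}^{\W_{\alpha_1}}(\chi_\A^\Psi)$ contains $\chi^{((\alpha_1-2),(2))}$ twice, and for (c) you both use the projection $\piL$/$\piR$ to a three-column type-$\A$ index and invoke Lemma~\ref{a-lem}.

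The only genuine difference is in part (b). The paper handles all $\alpha_0\ge 2$ uniformly by a single direct computation: with $\gamma_0=\gamma_1=\one$ it checks (via \eqref{bbb-prop1}, \eqref{pieri-bc-eq}, and \eqref{LR-bc-eq}) that $\chi^{((n-\alpha_0),(\alpha_0))}$ occurs in $\chi_\BC^\Theta$ with multiplicity at least two. You instead split off $\alpha_0\ge 4$ and reduce that case to Lemma~\ref{a-lem}(a) through the projection $\piL$, then treat $\alpha_0=2$ separately by the same kind of direct computation the paper uses --- and indeed your repeated constituent $\chi^{((\alpha_1),(2))}$ is exactly the paper's $\chi^{((n-\alpha_0),(\alpha_0))}$ specialized to $\alpha_0=2$. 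The paper's route is shorter and avoids the case split; yours has the virtue of showing that the projection machinery already settles everything except the borderline $\alpha_0=2$, which clarifies why that case needs its own argument.
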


\begin{proof}
Suppose (a) holds and let $\Psi := \left[\begin{smallmatrix} \alpha_1  \\ 
\beta_1   \\ 
\gamma_1 
\end{smallmatrix}\right]$ and $m := \alpha_1$. 
To prove that $\chi_\BC^\Theta$ is not multiplicity-free,
it suffices by \eqref{theta-bullet-bc-eq} to show that  $\Ind_{S_{m}}^{\W_{m}} (\chi_\A^\Psi)$ is not multiplicity-free. 
As $\chi_\A^{\overline\Psi} = \chi_\A^{\Psi}\sgn$, we may assume  $\gamma_1=\one$.
Proposition~\ref{one-col-a-prop}
and \eqref{LR-ba-eq} imply that $\Ind_{S_{m}}^{\W_{m}} (\chi_\A^\Psi)=\sum_{\nu \in \EvenRows(m)}\sum_{\lambda,\mu} c_{\lambda\mu}^\nu \chi^{(\lambda,\mu)}$, which is not multiplicity-free as $c^{(m)}_{(m-2)(2)} = c^{(m-2,2)}_{(m-2)(2)} = 1$.
 
 Suppose (b) holds. We may assume $\beta_1 \in \{\id,\id^+\}$ given  the previous paragraph.
Let $m = \alpha_0$.
It is straightforward from Section~\ref{LR-bc-sect}
 to check that $\chi_\BC^\Theta$ 
 contains either $\chi^{((n-m),(m))}$ (when $\gamma_0=\gamma_1=\one$) or $\chi^{((1^{n-m}),(1^m))}$
 (when $\gamma_0=\gamma_1=\sgn$)
  as a
 constituent with multiplicity greater than one.
  
 Finally, if (c) holds then one of $\piL(\chi_\BC^\Theta)$ or  $\piR(\chi_\BC^\Theta)$ is nonzero.
 But if $\piL(\chi_\BC^\Theta)=\chi_\A^{\piL(\Theta)}$ is nonzero then it 
cannot be multiplicity-free since $\piL(\Theta)$ has three columns.
The character  $\piR(\chi_\BC^\Theta)$  likewise cannot be nonzero and multiplicity-free.
Hence $\chi_\BC^\Theta$ must not be multiplicity-free.
\end{proof}

%

Let $\OddBiRows(n,q)$ be the set of bipartitions $(\lambda,\mu)\vdash n$ such that $\lambda \cup \mu$ has exactly $q$ odd parts.
Define $\OddBiCols(n,q) = \{ (\lambda^\top,\mu^\top) : (\lambda,\mu)\in\OddBiRows(n,q)\}$.

\begin{proposition} \label{bc-index-prop}
Suppose $\Theta \in \MTI(\W_n)$ 
and $\chi^\Theta_\BC$ is
multiplicity-free.
Then
 $\Theta$ has one of the following forms:
\ben

\item[(a)] $\left[\begin{smallmatrix} 
k & n-k \\ 
\id/\id^+ & \id/\id^+   \\ 
\gamma_0 & \gamma_1
\end{smallmatrix}\right]$ for some $0 \leq k \leq n$,
$\gamma_0 \in \{ \one,\sgn,\one_{+-},\one_{-+}\}$, and $\gamma_1 \in \{\one,\sgn\}$.

\item[(b)] $\left[\begin{smallmatrix} 
2k & n-2k \\ 
\fpf & \id/\id^+   \\ 
\one & \sgn
\end{smallmatrix}\right]$ for some $0\leq k  \leq \lfloor n/2\rfloor$, in which case $\chi_\BC^\Theta =\sum_{(\lambda,\mu) \in \OddBiRows(n,n-2k)} \chi^{(\lambda,\mu)}$.

\item[(c)] $\left[\begin{smallmatrix} 
2k & n-2k \\ 
\fpf & \id/\id^+   \\ 
\sgn & \one
\end{smallmatrix}\right]$ for some $0\leq k \leq \lfloor n/2\rfloor $, in which case $\chi_\BC^\Theta =\sum_{(\lambda,\mu) \in \OddBiCols(n,n-2k)} \chi^{(\lambda,\mu)}$.

\item[(d)] $\left[\begin{smallmatrix} n & 0 \\ (p,q)  & \id/\id^+ \\ \gamma_0 & \one  \end{smallmatrix}\right] $ for some $p,q>0$ such that $p+q=n$
and $\gamma_0 \in \{ \one,\sgn,\one_{+-},\one_{-+}\}$.

\een

\end{proposition}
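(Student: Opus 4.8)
The plan is to walk through the definition of $\MTI(\W_n)$, pinning down the second row first, then the characters $\gamma_0$ and $\gamma_1$, invoking Lemma~\ref{b-lem} at each stage to discard the shapes that are not multiplicity-free, and exploiting the built-in redundancies of the notation (that $\TT^\Theta$ ignores $\beta_i$ and $\gamma_i$ when $\alpha_i=0$, and ignores the substitutions $\id\leftrightarrow\id^+$ and $\one\leftrightarrow\sgn$ in the second factor when $\alpha_1=1$) to bring the survivors into the four normal forms (a)--(d).

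First I would deal with $\beta_1$. By definition of $\MTI(\W_n)$, having $\beta_1\in\{\fpf,\fpf^+\}$ forces $\alpha_1\in\{4,6,8,\dots\}$, and Lemma~\ref{b-lem}(a) says that in precisely this situation $\chi_\BC^\Theta$ is not multiplicity-free; hence $\beta_1\in\{\id,\id^+\}$ for every multiplicity-free index, which is why $\beta_1$ appears only as $\id/\id^+$ in (a)--(d). Next, if $\beta_0$ is a pair $(p,q)$, then Lemma~\ref{b-lem}(c) forces $\alpha_1=0$, i.e.\ $\alpha_0=n$; since $\TT^\Theta$ then does not see $\beta_1$ or $\gamma_1$, we may normalize those entries and land in form (d) (whose characters are indeed multiplicity-free by Proposition~\ref{bbb-prop2}, $\Lambda$ being a set of distinct partitions). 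So from now on we may assume $\beta_0\in\{\id,\id^+,\fpf\}$.

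If $\beta_0\in\{\id,\id^+\}$, then $\Theta$ is literally of the shape in (a) with $k=\alpha_0$ — recalling that the $\MTI$ conditions permit $\gamma_0$ to be any of the four linear characters when $\alpha_0\ge 2$ and restrict $\gamma_0\in\{\one,\sgn\}$ when $\alpha_0\le 1$, which is a subcase of (a) — and there is nothing further to check. If $\beta_0=\fpf$, the $\MTI$ conditions make $\alpha_0$ even, say $\alpha_0=2k$, and $\gamma_0\in\{\one,\sgn\}$, so $\Theta=\left[\begin{smallmatrix}2k & n-2k\\\fpf & \id/\id^+\\\gamma_0 & \gamma_1\end{smallmatrix}\right]$ with $\gamma_0,\gamma_1\in\{\one,\sgn\}$. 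Here Lemma~\ref{b-lem}(b) applies: if $k\ge 1$, $\alpha_1\ge 2$, and $\gamma_0=\gamma_1$, then $\chi_\BC^\Theta$ is not multiplicity-free. After observing that the residual cases collapse — $\alpha_0=0$ puts $\Theta$ into form (a) since $\TT^\Theta$ then ignores $\beta_0,\gamma_0$; $\alpha_1=0$ gives the endpoint $k=\lfloor n/2\rfloor$ of (b) (if $\gamma_0=\one$) or (c) (if $\gamma_0=\sgn$); and $\alpha_1=1$ lets us freely swap $\gamma_1$ — we may assume $\gamma_0\neq\gamma_1$, so that $(\gamma_0,\gamma_1)=(\one,\sgn)$ is form (b) and $(\gamma_0,\gamma_1)=(\sgn,\one)$ is form (c).

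Finally, the displayed character identities for (b) and (c) follow by expanding $\chi_\BC^\Theta$ via \eqref{theta-bullet-bc-eq} as $\chi_\BC^{\left[\begin{smallmatrix}2k\\\fpf\\\gamma_0\end{smallmatrix}\right]}\bullet_\BC\Ind_{S_{n-2k}}^{\W_{n-2k}}(\gamma_1)$, substituting \eqref{bbb-prop1} for the $\fpf$-factor and \eqref{pieri-bc-eq} for the induced sign or trivial character, and running the Littlewood--Richardson/Pieri computation of Section~\ref{LR-bc-sect}: adjoining cells in distinct rows (resp.\ columns) to bipartitions in $\EvenBiRows(2k)$ (resp.\ $\EvenBiCols(2k)$) produces, with multiplicity one, exactly the bipartitions whose parts include $n-2k$ odd ones, i.e.\ $\OddBiRows(n,n-2k)$ (resp.\ $\OddBiCols(n,n-2k)$); this is the type-$\BC$ analogue of cases (g) and (h) of Proposition~\ref{one-col-a-prop}. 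I expect the main obstacle to be not any single algebraic step but the disciplined bookkeeping of the degenerate indices with $\alpha_i\in\{0,1\}$, where the redundancy in the $\MTI(\W_n)$ notation has to be untangled carefully so that each multiplicity-free index is seen to coincide with — not merely be equivalent to — one of the four listed shapes.
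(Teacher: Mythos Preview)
Your approach is correct and is essentially an expanded version of the paper's own two-sentence proof, which reads: ``The given cases account for all model indices in $\MTI(\W_n)$ not excluded by Lemma~\ref{b-lem}. The formulas in parts (b) and (c) follow by combining \eqref{theta-bullet-bc-eq}--\eqref{LR-ba-eq} with \eqref{bbb-prop1}.'' Your case analysis on $\beta_1$, then $\beta_0$, then the characters, invoking Lemma~\ref{b-lem} at each step and cleaning up the degenerate indices via the redundancies in $\TT^\Theta$, is exactly the content the paper is compressing; your derivation of the character formulas in (b) and (c) via \eqref{bbb-prop1}, \eqref{pieri-bc-eq}, and the Pieri rules likewise matches what the paper cites.
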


\begin{proof}
The given cases account for all model indices in $\MTI(\W_n)$ not excluded by Lemma~\ref{b-lem}.
The formulas in parts (b) and (c) follow by combining \eqref{theta-bullet-bc-eq}--\eqref{LR-ba-eq} with \eqref{bbb-prop1}.
\end{proof}

\begin{theorem}\label{b-thm} 
Assume $n\geq 2$.  
The following sets are inequivalent perfect models for $\W_n$:  
\[ 
\ba
\cP^{\BC}_n &:=\left\{ \TT^\Theta : \Theta = \left[\begin{smallmatrix} 
2k & n-2k \\ 
\fpf & \id   \\ 
\one & \sgn
\end{smallmatrix}\right] \text{ for }0\leq k \leq \lfloor\tfrac{n}{2}\rfloor \right\},
\\
\hat \cP^{\BC}_n &:=\left\{ \TT^\Theta : \Theta =  \left[\begin{smallmatrix} 
2 & n-2 \\ 
\id & \id   \\ 
\one & \sgn
\end{smallmatrix}\right],\ \left[\begin{smallmatrix} 
2 & n-2 \\ 
\id & \id   \\ 
\one_{-+} & \sgn
\end{smallmatrix}\right], \text{ or } \left[\begin{smallmatrix} 
2k & n-2k \\ 
\fpf & \id   \\ 
\one & \sgn
\end{smallmatrix}\right] \text{ for }k = 0,2,3,4,\dots \lfloor\tfrac{n}{2}\rfloor \right\}.
\ea
\]
If $n\neq 3$ then
each perfect model for $\W_n$ is strongly equivalent to one of $\cP^{\BC}_n$, $ \overline{\cP^{\BC}_n}$, $\hat \cP^{\BC}_n $, or $ \overline{\hat \cP^{\BC}_n}$.
\end{theorem}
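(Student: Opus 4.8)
The plan is to imitate the proof of Theorem~\ref{a-thm}, transferring the problem to type $\mathsf A$ through the projection maps $\piL,\piR$ of Lemma~\ref{bca-pi-lem}. First I would verify that both sets are perfect models. For $\cP^{\BC}_n$ this is immediate from Proposition~\ref{bc-index-prop}(b): its characters are the sums $\sum_{(\lambda,\mu)\in\OddBiRows(n,n-2k)}\chi^{(\lambda,\mu)}$ for $0\le k\le\lfloor n/2\rfloor$, and the sets $\OddBiRows(n,m)$ with $m\equiv n\pmod 2$ partition the bipartitions of $n$. For $\hat\cP^{\BC}_n$ it suffices to see that the $k=1$ triple $\TT^\Theta$ with $\Theta=\bigl[\begin{smallmatrix}2&n-2\\\fpf&\id\\\one&\sgn\end{smallmatrix}\bigr]$ has the same character as the sum of the two replacement triples; by \eqref{theta-bullet-bc-eq} all three characters are $\bullet_\BC$-products with the common factor $\Ind_{S_{n-2}}^{\W_{n-2}}(\chi_\A^{\bigl[\begin{smallmatrix}n-2\\\id\\\sgn\end{smallmatrix}\bigr]})$, so one only needs the identity on $\W_2$, where $\chi_\BC^{\bigl[\begin{smallmatrix}2\\\fpf\\\one\end{smallmatrix}\bigr]}=\one+\one_{-+}$ by \eqref{bbb-prop1} and the two replacement triples on $\W_2$ carry characters $\one$ and $\one_{-+}$.

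For inequivalence, note that when $n\neq 2$ the group $\W_n$ has no nontrivial Coxeter automorphism, so any equivalence of perfect models must match triples whose characters agree up to the twist by $\sgn$. Using Lemma~\ref{bca-pi-lem} together with the identities $\piL(\overline\Theta)=\overline{\piR(\Theta)}$ and $\piR(\overline\Theta)=\overline{\piL(\Theta)}$ (which hold for $\MTI(\W_n)$ just as for $\MTI(S_n)$), one checks that for every triple $\TT$ in $\cP^{\BC}_n$ both $\piL$ and $\piR$ are nonzero on $\chi^\TT$ and on $\chi^\TT\sgn$ — indeed each such triple has $\alpha_0=0$ or $\beta_0=\fpf$, so $\piL(\Theta)=\piR(\Theta)\ne 0$ — whereas $\hat\cP^{\BC}_n$ contains the triple $\TT^\Theta$ with $\Theta=\bigl[\begin{smallmatrix}2&n-2\\\id&\id\\\one&\sgn\end{smallmatrix}\bigr]$, for which $\piR(\chi^{\TT^\Theta})=0$. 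This property of a perfect model is preserved under $\approx$, so $\cP^{\BC}_n\not\approx\hat\cP^{\BC}_n$; for $n=2$ the two models have different cardinality, so they are inequivalent for trivial reasons.

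For the classification, let $\cM\subseteq\MTI(\W_n)$ index a perfect model (every perfect model is of this form by Theorems~\ref{tech1-thm} and \ref{tech2-thm}), so each $\Theta\in\cM$ is of one of the types (a)--(d) of Proposition~\ref{bc-index-prop}. Applying $\piL$ (resp.\ $\piR$) to the defining equation and using Lemma~\ref{bca-pi-lem} shows that $\cM^L:=\{\piL(\Theta):\Theta\in\cM,\ \piL(\Theta)\neq 0\}$ and $\cM^R$ index perfect models for $S_n$; by Theorem~\ref{a-thm} (and a direct check when $n$ is small enough that it does not apply) each of them is strongly equivalent to $\cP^{\A}_{n-1}$ or $\overline{\cP^{\A}_{n-1}}$, hence, up to the trivial ambiguities, is a list $\Phi_0,\dots,\Phi_{\lfloor n/2\rfloor}$ of the forms \eqref{m1-eq}--\eqref{m6-eq} or its sign-twist (Remark~\ref{all-a-models-rmk}). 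Passing from $\cM$ to $\overline\cM$ interchanges $\cM^L$ with $\overline{\cM^R}$ and the four targets come in $\overline{\cdot}$-pairs, so it suffices to treat the case where $\cM^L$ is a $\sgn$-model. The decisive observation is that an index with $\alpha_0>0$ and $\beta_0=\fpf$ satisfies $\piL(\Theta)=\piR(\Theta)=\Theta$, while one with $\alpha_0>0$ and $\beta_0\in\{\id,\id^+\}$ or $\beta_0=(p,q)$ contributes to exactly one of $\cM^L,\cM^R$; thus the two type-$\mathsf A$ models share precisely the part coming from indices with $\alpha_0=0$ or $\beta_0=\fpf$. Matching the middle-range entries $\Phi_k$ for $2\le k\le\lceil n/2\rceil-2$ (which in a $\sgn$-model are of $\OR$-form by \eqref{m3-eq}) against this shared part — and observing that an $\OR$-character cannot be the $\piL$ of an index with $\beta_0\in\{\id,\id^+,(p,q)\}$ (no $\fpf$ in the first column), nor of an index with $\alpha_0=0$ (single-column image), nor of a type-(c) index (which yields an $\OC$-character) — forces $\cM$ to contain exactly the type-(b) indices $\bigl[\begin{smallmatrix}2k&n-2k\\\fpf&\id/\id^+\\\one&\sgn\end{smallmatrix}\bigr]$ for $2\le k\le\lceil n/2\rceil-2$; these lie in $\cM^R$ as well, so $\cM^R$ is also a $\sgn$-model, ruling out the mixed cases. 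It then remains to pin down the boundary indices, $k\in\{0,1\}$ and $k$ near $\lfloor n/2\rfloor$; the only bifurcation not forced up to strong equivalence is at $k=1$, where either $\Phi_1$ is of $\OR$-form and comes from the single type-(b) index, giving $\cM\sim\cP^{\BC}_n$, or $\Phi_1=\bigl[\begin{smallmatrix}2&n-2\\\id/\id^+&\id/\id^+\\\one&\sgn\end{smallmatrix}\bigr]$, which comes from the type-(a) index $\bigl[\begin{smallmatrix}2&n-2\\\id&\id\\\one&\sgn\end{smallmatrix}\bigr]$ and forces, in order that $\cM^R$ be complete, the companion $\bigl[\begin{smallmatrix}2&n-2\\\id&\id\\\one_{-+}&\sgn\end{smallmatrix}\bigr]$, giving $\cM\sim\hat\cP^{\BC}_n$; the $(\one,\one)$ case yields the barred models.

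I expect the boundary bookkeeping in that last step to be the main obstacle: one must show that no type-(d) index with $\beta_0=(p,q)$ and no type-(a) index twisted by $\one_{+-}$ or $\one_{-+}$ can enter $\cM$ (each would leave the complementary model $\cM^R$ or $\cM^L$ unable to be completed to a perfect model), that the only admissible ``refinement'' is the split at $k=1$, and that all ranks $n$ small enough for the middle range $2\le k\le\lceil n/2\rceil-2$ to be too short to force the type-(b) indices — in particular $n\in\{2,4\}$, where Theorem~\ref{a-thm} gives no uniqueness — are handled by direct, computer-assisted enumeration, including confirming that $n=3$ genuinely admits additional perfect models.
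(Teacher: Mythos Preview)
Your approach matches the paper's: project via $\piL,\piR$ to type $\mathsf A$, invoke Theorem~\ref{a-thm} and Remark~\ref{all-a-models-rmk}, force the indices $\Theta_k$ for $k\ge2$, then handle $k\in\{0,1\}$. The verification that both sets are models, your inequivalence argument, and the forcing of the middle range are all sound.

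The gap is in the boundary step, and it is not merely bookkeeping. First, there are more bifurcations than you list: at $k=0$ the preimages of the form \eqref{m1-eq} under $\piL$ are $\Theta_0$ \emph{and} $\Theta'_0:=\bigl[\begin{smallmatrix}n&0\\\id&\id\\\one_{+-}&\one\end{smallmatrix}\bigr]$, and at $k=1$ one must also allow $\Psi'_1:=\bigl[\begin{smallmatrix}n-2&2\\\id&\id\\\one_{+-}&\one\end{smallmatrix}\bigr]$ alongside $\Theta_1$ and $\Theta'_1$ (with analogous $\Theta''_0,\Theta''_1,\Psi''_1$ on the $\piR$ side). Second, your proposed mechanism for excluding these --- that they ``would leave the complementary model $\cM^R$ or $\cM^L$ unable to be completed'' --- does not work. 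Taking $\cM^0=\{\Theta'_0,\Theta''_0\}$ with $\Theta''_0:=\bigl[\begin{smallmatrix}n&0\\\id&\id\\\sgn&\one\end{smallmatrix}\bigr]$ gives $\piL(\Theta'_0)=\piR(\Theta''_0)=\bigl[\begin{smallmatrix}n\\\id\\\sgn\end{smallmatrix}\bigr]$ and $\piR(\Theta'_0)=\piL(\Theta''_0)=0$, so both $\cM^L$ and $\cM^R$ are perfectly good $\sgn$-models for $S_n$ and nothing on the type~$\mathsf A$ side obstructs this choice. What actually fails is that the resulting $\cM=\cM^0\sqcup\cM^1\sqcup\cM^2$ is not a perfect model for $\W_n$: the paper disposes of all the bad $(\cM^0,\cM^1)$ combinations at once with a degree count, observing that each substitution strictly lowers $\sum_{\Theta\in\cM}\chi_\BC^\Theta(1)$ below $\sum_k\chi_\BC^{\Theta_k}(1)=\sum_{\chi\in\Irr(\W_n)}\chi(1)$. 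You should replace your completability heuristic with this comparison; the fact that every index in $\MTI(\W_n)$ has at least one of $\piL,\piR$ nonzero (since $\gamma_0$ is linear, one of the two partitions in its label is always empty) is what guarantees $\cM$ cannot be enlarged beyond $\cM^0\sqcup\cM^1\sqcup\cM^2$, so the degree deficit is fatal.
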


\begin{proof}
It is clear from part (b) of Proposition~\ref{bc-index-prop} that $\cP^{\BC}_n$
 is a perfect model for $\W_n$.
It follows that $\hat\cP^{\BC}_n$ is also a perfect model since 
\[ \chi_\BC^{ \left[\begin{smallmatrix} 
2 & n-2 \\ 
\id & \id   \\ 
\one & \sgn
\end{smallmatrix}\right]} + \chi_\BC^{\left[\begin{smallmatrix} 
2 & n-2 \\ 
\id & \id   \\ 
\one_{-+} & \sgn
\end{smallmatrix}\right] } =
\Bigl(
\chi_\BC^{ \left[\begin{smallmatrix} 
2   \\ 
\id     \\ 
\one  
\end{smallmatrix}\right]} + \chi_\BC^{\left[\begin{smallmatrix} 
2   \\ 
\id     \\ 
\one_{-+}  
\end{smallmatrix}\right] } 
\Bigr)\bullet_\BC \Ind_{S_{n-2}}^{\W_{n-2}}\Bigl(\chi_\A^{\left[\begin{smallmatrix} 
  n-2 \\ 
  \id   \\ 
  \sgn
\end{smallmatrix}\right] }\Bigr)
=
 \chi^{\left[\begin{smallmatrix} 
2 & n-2 \\ 
\fpf & \id   \\ 
\one & \sgn
\end{smallmatrix}\right] }.
\]
 
We have checked the desired result using the computer algebra system \textsf{GAP} \cite{GAP} when $n\leq 4$, so assume $n\geq 5$.
Suppose $\cM$ is a set of model indices $\Theta \in \MTI(\W_n)$
such that $\{\TT^\Theta : \Theta \in \cM\}$ is a perfect model for $\W_n$.
Every perfect model for $\W_n$ arises in this way.
Define $\cM_{\mathsf{L}} := \{ \piL(\Theta)  : \Theta \in \cM\}\setminus\{0\}$
and 
$\cM_{\mathsf{R}} :=\{ \piR(\Theta)  : \Theta \in \cM\}\setminus\{0\}$.
Lemma~\ref{bca-pi-lem} implies that
$\{ \TT^\Theta : \Theta \in \cM_{\mathsf{L}}\}$ and $\{ \TT^\Theta : \Theta \in \cM_{\mathsf{R}}\}$
are perfect models for $S_n$.
After possibly replacing $\cM$ by $\overline\cM = \{ \overline \Theta : \Theta \in \cM\}$,
we may assume that $\{ \TT^\Theta : \Theta \in \cM_{\mathsf{L}}\}$ is a $\sgn$-model
 in the terminology of Remark~\ref{all-a-models-rmk}.

Define $ \Theta_k := \left[\begin{smallmatrix} 
2k & n-2k \\ 
\fpf & \id   \\ 
\one & \sgn
\end{smallmatrix}\right]$  for $0 \leq k \leq \lfloor n/2\rfloor$.
Remark~\ref{all-a-models-rmk} tells us that $\cM_{\mathsf{L}}$ must contain elements of  each of
the forms \eqref{m3-eq}, \eqref{m4-eq}, \eqref{m5-eq}, and \eqref{m6-eq}.
There are limited possibilities for  model indices $\Theta \in \MTI(\W_n)$ 
with $\chi_\BC^\Theta$ multiplicity-free
that can serve as the preimages for these elements under $\piL$.
It follows by inspecting Proposition~\ref{bc-index-prop}  
 that 
$\cM$ must contain a unique model index strongly equivalent to 
$\Theta_k$
 for at least each $2 \leq k \leq \lfloor n/2\rfloor$.
 Since $\piR(\Theta_k) =\piL(\Theta_k) = \Theta_k$,
it follows from Remark~\ref{all-a-models-rmk} that $\{ \TT^\Theta : \Theta \in \cM_{\mathsf{R}}\}$ is  
also  a $\sgn$-model.

By similar reasoning, for $\cM_{\mathsf{L}}$ to contain an index of the form \eqref{m1-eq},
 $\cM$ must contain a unique element strongly equivalent to
$
  \Theta_0
  =\left[\begin{smallmatrix} 
0 & n \\ 
\fpf & \id   \\ 
\one & \sgn
\end{smallmatrix}\right]
\sim
  \left[\begin{smallmatrix} 
0 & n \\ 
\id & \id   \\ 
\one & \sgn
\end{smallmatrix}\right]
$ or $
 \Theta'_0:=\left[\begin{smallmatrix} 
n & 0 \\ 
\id & \id   \\ 
\one_{+-} & \one
\end{smallmatrix}\right].
$
For $\cM_{\mathsf{R}}$ to contain an index of the form \eqref{m1-eq},
 $\cM$ must contain a unique element strongly equivalent to
$
  \Theta_0
$
or
$
 \Theta''_0:=\left[\begin{smallmatrix} 
n & 0 \\ 
\id & \id   \\ 
\sgn & \one
\end{smallmatrix}\right].
$
Likewise, 
for $\cM_{\mathsf{L}}$ to contain an index of the form \eqref{m2-eq},
 $\cM$ must contain a unique element strongly equivalent to
$
  \Theta_1
$, $\Theta'_1:=\left[\begin{smallmatrix} 
2 & n-2 \\ 
\id & \id   \\ 
\one & \sgn
\end{smallmatrix}\right]
$,
or
$ 
\Psi'_1:= \left[\begin{smallmatrix} 
n-2 & 2 \\ 
\id & \id   \\ 
\one_{+-} & \one
\end{smallmatrix}\right].
$
For $\cM_{\mathsf{R}}$ to contain an index of the form \eqref{m2-eq},
 $\cM$ must contain a unique element strongly equivalent to
  $
  \Theta_1$,
$
 \Theta''_1:=\left[\begin{smallmatrix} 
2 & n-2 \\ 
\id & \id   \\ 
\one_{-+} & \sgn
\end{smallmatrix}\right]
$,
or
$ \Psi''_1:= \left[\begin{smallmatrix} 
n-2 & 2 \\ 
\id & \id   \\ 
\sgn & \one
\end{smallmatrix}\right].
$

Thus,  $\cM$  contains a subset strongly equivalent to
$\cM^0 \sqcup \cM^1 \sqcup \cM^2$
where
 $\cM^0$ is either $\{ \Theta_0\}$ or $\{\Theta_0',\Theta_0''\}$;
$\cM^1$ is either $\{ \Theta_1\}$, $\{\Theta_1',\Theta_1''\}$,
$\{\Theta_1',\Psi_1''\}$, $\{\Psi_1',\Theta_1''\}$, or $\{\Psi_1',\Psi_1''\}$; and
 $\cM^2 := \{ \Theta_k : 2 \leq k \leq \lfloor n/2\rfloor\}$.
In fact, we must have $\cM \sim \cM^0\sqcup \cM^1\sqcup \cM^2$
since it is impossible to add any further indices  
without violating our assumption that $\{ \TT^\Theta : \Theta \in \cM_{\mathsf{L}}\}$ and $\{ \TT^\Theta : \Theta \in \cM_{\mathsf{R}}\}$
are perfect models for $S_n$.

If $\cM^0 = \{\Theta_0\}$ and $\cM^1 \in\{ \{\Theta_1\},\{\Theta_1',\Theta''_1\}\}$
then $\{\TT^\Theta : \Theta \in \cM\}$ is strongly equivalent to $\cP^{\BC}_n$ or $\hat\cP^{\BC}_n$ as desired.
All of the other choices for
$\cM^0$ and $\cM^1$ are all impossible since 
they would lead to   $\sum_{\Theta \in \cM} \chi_\BC^\Theta(1)
=\sum_{\Theta \in \cM^0\sqcup \cM^1 \sqcup \cM^2} \chi_\BC^\Theta(1)
 < \sum_{k=0}^{\lfloor n/2\rfloor} \chi_\BC^{\Theta_k}(1) = \sum_{\chi \in \Irr(\W_n)} \chi(1)$,
 contradicting the fact that $\{\TT^\Theta : \Theta \in \cM\}$ is a model. We conclude that 
if $\cP$ is a perfect model for $\W_n$ when $n\neq 3$ then $\cP$ or $\overline\cP$ is strongly equivalent to  $\cP^{\BC}_n$ or $\hat \cP^{\BC}_n $.
\end{proof}

\begin{example}\label{b3-ex}
There are 2 more equivalence classes of perfect models for $\W_3$, represented by 
\[ \ba
&\left\{ \TT^\Theta : \Theta = \left[\begin{smallmatrix} 
1 & 2 \\ 
\id & \id   \\ 
\one & \one
\end{smallmatrix}\right],
\left[\begin{smallmatrix} 
2 & 1 \\ 
\id & \id   \\ 
\sgn & \one
\end{smallmatrix}\right],
\left[\begin{smallmatrix} 
3 & 0 \\ 
\id & \id   \\ 
\one_{+-} & \one
\end{smallmatrix}\right],\text{ or }
\left[\begin{smallmatrix} 
3 & 0 \\ 
\id & \id   \\ 
\one_{-+} & \one
\end{smallmatrix}\right]
\right\} \text{ and }
\\
&\left\{ \TT^\Theta : \Theta = \left[\begin{smallmatrix} 
1 & 2 \\ 
\id & \id   \\ 
\one_{-+} & \one
\end{smallmatrix}\right],
\left[\begin{smallmatrix} 
2 & 1 \\ 
\id & \id   \\ 
\one_{+-} & \one
\end{smallmatrix}\right],
\left[\begin{smallmatrix} 
3 & 0 \\ 
\id & \id   \\ 
\one & \one
\end{smallmatrix}\right],\text{ or }
\left[\begin{smallmatrix} 
3 & 0 \\ 
\id & \id   \\ 
\sgn & \one
\end{smallmatrix}\right]
\right\}.
\ea
\]
\end{example}

\section{Model classification in type D}\label{d-sect}

In this section we continue to fix an integer $n\geq 2$ and consider the Coxeter group $W =\WD_n$ with generating set $S = \{s_{-1},s_2,s_3,\dots,s_{n-1}\}$.
Recall that $\WD_n$ is the subgroup of permutations $w \in \W_n$
for which $ |\{ i \in [n] : w(i) < 0\}|$ is even. Our main result here is Theorem~\ref{d-thm}.

\subsection{Perfect conjugacy classes in type D}\label{cc-d-sect}

Let $w \mapsto w^{\diamond}$ denote the Coxeter automorphism of $\WD_n$ interchanging $s_{-1}\leftrightarrow s_{1}$
while fixing the other simple generators. 
This is the restriction of $\Ad(s_0) \in \Aut(\W_n)$.
Write $w_0 $ for the longest element in $\WD_n$.
 If $n$ is even then $\diamond$ is an outer automorphism and  $w_0 =\bar 1 \bar 2 \bar 3 \cdots \bar n$ is central.
 If $n$ is odd then $w_0 = 1 \bar 2 \bar 3 \cdots \bar n$ and $\diamond = \Ad(w_0)$.
Thus $w_0^+ := (w_0,\Ad(w_0)) \in W^+$ is equal to $w_0$ if $n$ is even and to $(w_0,\diamond)$ when $n$ is odd.

If $n \neq 4$ then the Coxeter automorphisms of $\WD_n$ are $\{\id,\diamond\}$.
The Coxeter diagram of $\WD_4$ is 
\begin{center}
    \begin{tikzpicture}[xscale=1.2, yscale=1,>=latex,baseline=(z.base)]
    \node at (0,0) (z) {};
      \node at (-1,0) (T0) {$s_1$};
      \node at (0,0) (T1) {$s_2$};
      \node at (1,0) (T2) {$s_3$};
      \node at (0,-1) (T3) {$s_{-1}$};
      \draw[-]  (T0) -- (T1) -- (T2);
      \draw[-]  (T1) -- (T3);
     \end{tikzpicture}
     \end{center}
so there are two Coxeter automorphisms of $\WD_4$ of order three that fix $s_2$. We denote these by
\be
\circlearrowright : s_{-1} \mapsto s_{1} \mapsto s_3 \mapsto s_{-1}
\quand 
\circlearrowleft : s_{-1} \mapsto s_3 \mapsto s_{1} \mapsto s_{-1}.
\ee
There are three nontrivial Coxeter involutions of $\WD_4$, given by
$\diamond$, $\circlearrowright \diamond \circlearrowleft$, and $\circlearrowleft \diamond \circlearrowright$.
The latter two
interchange $s_{-1}\leftrightarrow s_3$ and $s_1 \leftrightarrow s_{3}$, respectively, while fixing the other simple generators.\footnote{
The automorphism  $\circlearrowright \diamond \circlearrowleft$ interchanges $s_{-1}$ and $s_3$ because $s_{-1}^{\circlearrowright \diamond \circlearrowleft} := ((s_{-1}^\circlearrowright)^\diamond))^\circlearrowleft = (s_{1}^\diamond)^\circlearrowleft = s_{-1}^{\circlearrowleft} = s_3$.}

Fix $p,q>  0$ with $p+q=n$. If $q$ is even then 
let $\cK_{(p,q)}^{\WD_n} := \cK_{(p,q)}^{\W_n}$ be the set of $w\in \WD_n$ with 
$| \{ i \in [n] : w(i) = i\}| = p$ and $ |\{ i \in [n] : w(i) =-i\}| = q.$
If $q$ is odd then   define 
\[ \cK_{(p,q)}^{\WD_n} := \left\{ (ws_0,\diamond) : w \in \cK_{(p,q)}^{\W_n}\right\}\subseteq (\WD_n)^+.\]
The unique minimal-length element of $\cK_{(p,q)}^{\WD_n}$ is either
\[\bar 1 \bar 2 \cdots \bar q (q+1)(q+2)\cdots n \in \WD_n \quord 
\( 1 \bar 2 \cdots \bar q (q+1)(q+2)\cdots n,\diamond\)\in (\WD_n)^+\]
according to whether $q$ is even or odd.
Let 
$\cK_{\id}^{\WD_n} := \{ 1\}$ and $\cK_{\id^+}^{\WD_n}:= \{ w_0^+\}$.
Recall that if $n$ is even then $\cK^{\W_n}_{\fpf}$ is the set of elements $z=z^{-1} \in W_n$ with $|z(i)| \neq i$ for all $i \in [n]$.
If $z$ belongs to this set and $z(i) = -j$ for some $i,j \in [n]$ then $i\neq j$ and $z(j) = -i$,
so $z \in \WD_n$.
Define \[
\cK^{\WD_n}_{\fpf} := \left \{ z \in \cK^{\W_n}_{\fpf}: |\{ i \in [n] : w(i) < 0\}|\text{ is divisible by $4$}\right\}
\quand \cK^{\WD_n}_{\fpf^\diamond} =\cK^{\W_n}_{\fpf} - \cK^{\WD_n}_{\fpf}.\]
The unique minimal-length elements of $\cK^{\WD_n}_{\fpf}$
and $\cK^{\WD_n}_{\fpf^\diamond}$ are respectively
\[s_1s_3s_5\cdots s_{n-1}
\quand s_{-1}s_3s_5\cdots s_{n-1}.\]
When $n \in \{2,3\}$ or $n\geq 5$ the distinct perfect conjugacy classes in $W^+$ consist of 
$\cK^{\WD_n}_{\id}$, $\cK^{\WD_n}_{\id^+}$, and $\cK^{\WD_n}_{(p,q)}$ for all $p,q>0$ with $p+q=n$,
along with $\cK^{\WD_n}_\fpf$ and $\cK^{\WD_n}_{\fpf^\diamond}$ if $n$ is even  \cite[Ex. 9.2]{RV}.
When $n=1$ the only perfect conjugacy class is $\cK^{\WD_1}_{\id}=\cK^{\WD_1}_{\id^+} = \{1\}$ since $\WD_1=\{1\}$ is trivial.
The case $n=4$ is exceptional.
For $p,q>0$ with $p+q=4$ define
\[
\cK_{(p,q,\circlearrowright)}^{\WD_4} := \left\{ z^\circlearrowright : z \in \cK_{(p,q)}^{\WD_4}\right\}
\quand
\cK_{(p,q,\circlearrowleft)}^{\WD_4} := \left\{ z^\circlearrowleft : z \in \cK_{(p,q)}^{\WD_4}\right\}.
\]
There are $11$ perfect conjugacy classes in $(\WD_4)^+$, consisting of 
 $\cK^{\WD_4}_{\id}$ and $\cK^{\WD_4}_{\id^+}$ together with
\[
\barr{lll}
\cK^{\WD_4}_{(3,1)}\ni(1,\diamond), 
\quad& 
\cK^{\WD_4}_{(3,1,\circlearrowright)} \ni(1,\circlearrowright\diamond \circlearrowleft), 
\quad& 
\cK^{\WD_4}_{(3,1,\circlearrowleft)} \ni(1,\circlearrowleft\diamond \circlearrowright),
\\[-4pt]\\
\cK^{\WD_4}_{(2,2)}\ni s_{-1}s_1, \quad& 
\cK^{\WD_4}_{(2,2,\circlearrowright)} = \cK^{\WD_4}_{\fpf}  \ni s_{1}s_3,
\quad& 
\cK^{\WD_4}_{(2,2,\circlearrowleft)} = \cK^{\WD_4}_{\fpf^\diamond}  \ni s_{3}s_{-1},
\\[-4pt]\\
\cK^{\WD_4}_{(1,3)} \ni (1 \bar 2 \bar 3  4 , \diamond), \quad& \cK^{\WD_4}_{(1,3,\circlearrowright)}  \ni (4321, \circlearrowright\diamond \circlearrowleft) ,
\quad& \cK^{\WD_4}_{(1,3,\circlearrowleft)} \ni (\bar 4 32 \bar 1, \circlearrowleft\diamond \circlearrowright).
\earr
\]
 This listed elements of each class are the unique ones of minimal length.
 
\subsection{Model indices in type D}
If $n > 2$ then $\one$ and $\sgn$ are the only linear characters of $\WD_n$.
If $n=2$ then there are four linear characters given by
$\one_{++} :=\one$, $\one_{--} := \sgn$, $\one_{+-}$, and $\one_{-+}$,
where the last two satisfy 
$\one_{\pm\mp}: s_{-1} \mapsto \pm 1$ and  $\one_{\pm\mp}:s_{1} \mapsto \mp 1$.
One has
$ \one^\diamond = \one,$ $ \sgn^\diamond=\sgn$,
and
$\one_{\pm\mp}^\diamond = \one_{\mp\pm}$
where for any function $f : \WD_n\to\CC$ we write $f^\diamond $ for the map with $w \mapsto f(w^\diamond)$.

Let $\MTI(\WD_n)$ denote the set of $3\times 2$ arrays,
to be called \defn{model indices} for $\WD_n$,
 of the form
\[
\Theta = \left[\begin{smallmatrix} \alpha_0 & \alpha_1 \\ 
\beta_0& \beta_1  \\ 
\gamma_0 & \gamma_1 
\end{smallmatrix}\right]
\]
where 
$ \alpha_1 \in \{-n,n\}\sqcup \{0,1,\dots,n-2\}$  
and $\alpha_0 = n-|\alpha_1|$;
where $\beta_0$ is either 
\begin{itemize}
\item a symbol in  $\{\id, \id^+,\fpf,\fpf^\diamond\}$, with $\beta_0\in \{\fpf,\fpf^\diamond\}$ allowed only if $\alpha_0$ is even,
\item when $\alpha_0>2$, a pair of positive integers $(p,q)$ with $p+q=\alpha_0$, or
\item when $\alpha_0=4$, one of the triples 
$(3,1,\circlearrowright)$, 
$(3,1,\circlearrowleft)$,
$(1,3,\circlearrowright)$, or
$(1,3,\circlearrowleft)$;
\end{itemize}
where $\beta_1$ is a symbol in  $\{\id, \id^+, \fpf, \fpf^+\}$,
with $\beta_1 \in \{\fpf,\fpf^+\}$ only if $|\alpha_1|\in \{4,6,8,\dots\}$;
and where $\gamma_0$ and $\gamma_1$ are linear characters of $\WD_{\alpha_0}$ and $S_{|\alpha_1|}$.
Let $\Theta = \left[\begin{smallmatrix} \alpha_0 & \alpha_1 \\ 
\beta_0& \beta_1  \\ 
\gamma_0 & \gamma_1 
\end{smallmatrix}\right] \in \MTI(\WD_n)$. Define 
\[ J_0 := 
\begin{cases}
\varnothing &\text{if }\alpha_0 = 0 \\
\{ s_{-1},s_1,s_2,\dots,s_{\alpha_0-1} \} &\text{otherwise}
\end{cases}
\quand
J_1 := 
\{ s_1,s_2,\dots,s_{|\alpha_1|-1} \}. \]
 Define $\varphi_1:  S_{|\alpha_1| }^+ \to \langle J_1\rangle^+$  to be the isomorphism
 sending $s_j \mapsto s_{\alpha_0 + j}$ for $j \in [|\alpha_1|-1]$.
When $\alpha_0\neq 0$ define $\cK_0 = \cK^{\W_{\alpha_0}}_{\beta_0}$.
When $\alpha_1 \neq 0$ define $\cK_1$ to be the image of $\cK^{S_{|\alpha_1|}}_{\beta_1}$ under $\varphi_1$.
Let
\[ \TT^\Theta :=
\begin{cases} ( J_0, \cK_0, \gamma_0)& \text{if }\alpha_0=n, \\
( J_1, \cK_1, \gamma_1)& \text{if }\alpha_1=n, \\
( J_1, \cK_1, \gamma_1)^\diamond &\text{if }\alpha_1=-n \\
( J_0, \cK_0, \gamma_0) \otimes( J_1, \cK_1, \gamma_1) &\text{otherwise}.
\end{cases} 
\]
This gives a  model triple $\TT^\Theta$ for $\WD_n$ which is factorizable\footnote{
Note that $\TT^\Theta$ would not be factorizable if we allowed $\beta_0 = (p,q)=(1,1)$ when $\alpha_0=2$.},
and by Theorems~\ref{tech1-thm} and \ref{tech2-thm} every multiplicity-free model triple for $\WD_n$ arises 
from this construction. As usual we also define
\[
\chi_\D^\Theta := \chi^{\TT^\Theta}.
\]
If $\alpha_0 \in \{0,1\}$ then $\TT^\Theta$ does not depend on $\beta_0$ or $\gamma_0$
and if $\alpha_1=0$ then $\TT^\Theta$ does not depend on $\beta_1$ or $\gamma_1$.
If $\alpha_1 =1$ then
$\TT^\Theta$ is unaffected by changing $\beta_1=\id$ to $\id^+$ (or vice versa) or $\gamma_1=\one$ to $\sgn$ (or vice versa).

%

Let $\Theta = \left[\begin{smallmatrix} \alpha_0  &\alpha_1 \\ 
\beta_0  &\beta_1 \\ 
\gamma_0 &\gamma_1 
\end{smallmatrix}\right] \in \MTI(\WD_n)$. Define $\overline\Theta := \left[\begin{smallmatrix} \alpha_0  &\alpha_1   \\ 
\beta_0  &\beta_1   \\ 
\bar\gamma_0 &\bar\gamma_1
\end{smallmatrix}\right]$ where $\bar \gamma_i:=   \gamma_i\sgn$.
Next let
\be
\Theta^\vee := \left[\begin{smallmatrix} \alpha_0  &\alpha_1^\diamond \\ 
\beta^\vee_0  &\beta^\vee_1 \\ 
\gamma_0^\diamond &\gamma_1 
\end{smallmatrix}\right] \text{ if $n$ is odd}
\quand
\Theta^\vee :=  \left[\begin{smallmatrix} \alpha_0  &\alpha_1 \\ 
\beta^\vee_0  &\beta^\vee_1 \\ 
\gamma_0 &\gamma_1 
\end{smallmatrix}\right] \text{ if $n$ is even,}
\ee
where we set $\alpha_1^\diamond :=-\alpha_1$ when $|\alpha_1|=n$
and $\alpha_1^\diamond := \alpha_1$ otherwise,
and define
\[ \beta^\vee_0:=\begin{cases} (q,p) &\text{if $\beta_0  =(p,q)$} 
\\
(q,p,\circlearrowright) &\text{if $\beta_0  =(p,q,\circlearrowright)$ and $n$ is even} 
\\
(q,p,\circlearrowright) &\text{if $\beta_0  =(p,q,\circlearrowleft)$ and $n$ is odd} 
\\
(q,p,\circlearrowleft) &\text{if $\beta_0  =(p,q,\circlearrowleft)$ and $n$ is even} 
\\
(q,p,\circlearrowleft) &\text{if $\beta_0  =(p,q,\circlearrowright)$ and $n$ is odd} 
\\
\fpf &\text{$\beta_0 = \fpf$ and $\frac{\alpha_0}{2}+n$ is even} \\
\fpf &\text{$\beta_0 = \fpf^\diamond$ and $\frac{\alpha_0}{2}+n$ is odd} \\
\fpf^\diamond &\text{$\beta_0 = \fpf^\diamond$ and $\frac{\alpha_0}{2}+n$ is even} \\
\fpf^\diamond &\text{$\beta_0 = \fpf$ and $\frac{\alpha_0}{2}+n$ is odd} \\
  \id &\text{if $\beta_0 = \id^+$} \\
  \id^+ &\text{if $\beta_0 = \id$} \end{cases}
  \quand
   \beta^\vee_1:=\begin{cases} 
\fpf &\text{$\beta_1 = \fpf^+$} \\
\fpf^+ &\text{if $\beta_1= \fpf$} \\
\id &\text{if $\beta_1 = \id^+$} \\
  \id^+ &\text{if $\beta_1 = \id$}. \end{cases}
\]
Next define $ \Theta^\diamond :=  \left[\begin{smallmatrix} \alpha_0  &\alpha_1^\diamond \\ 
\beta_0^\diamond  & \beta_1 \\ 
\gamma_0^\diamond &\gamma_1 
\end{smallmatrix}\right]
$
where $\alpha_1^\diamond$ is as above and 
\[
\beta_0^\diamond
:=
\begin{cases}
(p,q,\circlearrowleft) &\text{if }\beta_0=(p,q,\circlearrowright) \\
(p,q,\circlearrowright) &\text{if }\beta_0=(p,q,\circlearrowleft) \\
\fpf^\diamond &\text{if }\beta_0=\fpf \\
\fpf &\text{if }\beta_0=\fpf^\diamond \\
\beta_0 &\text{otherwise}.
\end{cases}
\] 
We always have $\gamma_0^\diamond = \gamma_0$
unless $\alpha_0=2$ and $\gamma_0 = \one_{\pm\mp}$.

We adapt the relations $\equiv$, $\sim$, and $\approx$ to (sets of) model indices in $\MTI(\WD_n)$
in the same way that we did for elements of $\MTI(S_n)$ and $\MTI(\W_n)$.
 It is easy to check that
 $\TT^{\overline\Theta} = \overline{\TT^\Theta}$ and $\TT^{\Theta^\diamond} = (\TT^\Theta)^\diamond$,
 and that if $ \Theta'$ is formed from $\Theta$ by changing any entries equal to $\id^+$  to $\id$,
 then $\TT^\Theta \equiv \TT^{\Theta'}$.
 It is somewhat more involved, but still straightforward, to verify that
$\TT^{\Theta^\vee} = (\TT^\Theta)^\vee$.
We have $\Theta \sim \Theta^\vee \sim \Theta^\diamond$ when $n$ is odd and 
 $\Theta \sim \Theta^\vee \approx \Theta^\diamond$ when $n$ is even.
 
%

 \subsection{Littlewood-Richardson coefficients in type D} \label{LR-d-sect}

If $(\lambda,\mu) \vdash n$  with $\lambda \neq \mu$ then the restricted character
\[ \chi^{\{\lambda,\mu\}} := \Res^{\W_n}_{\WD_n}( \chi^{(\lambda,\mu)}) =  \Res^{\W_n}_{\WD_n}( \chi^{(\mu,\lambda)})
\in \Irr(\WD_n)\]
is irreducible. In this case we refer to $\{\lambda,\mu\}$ as an unordered bipartition of $n$
and write $\{\lambda,\mu\}\vdash n$. If $n$ is even and $\nu \vdash n/2$ then 
\[ \Res^{\W_n}_{\WD_n}( \chi^{(\nu,\nu)})=\chi^{[\nu,+]}+ \chi^{[\nu,-]}\] for two different irreducible characters $\chi^{[\nu,\pm]}$.
The distinct elements of $\Irr(\WD_n)$ 
consist of
$ \chi^{\{\lambda,\mu\}}$ for all $\{\lambda,\mu\}\vdash n$ together with the \defn{degenerate} characters
  $ \chi^{[\nu,\pm]}$ for all $\nu \vdash n/2$ when $n$ is even.
  We distinguish the degenerate irreducible characters
  by requiring that  $\chi^{[\nu,+]}(w_\fpf) - \chi^{[\nu,-]}(w_\fpf) $
  be positive for the element $w_\fpf :=s_1s_3s_5\cdots s_{n-1}$\footnote{
  The convention in \cite[\S3]{Geck} for distinguishing the characters $\chi^{[\nu,\pm]}$ is instead to require that $\chi^{[\nu,+]}(w_\fpf) - \chi^{[\nu,-]}(w_\fpf)) = (-2)^{n/2}\chi^\nu(1)$.  This gives the same labels if $n$ is a multiple of $4$.
 };
by \cite[Lem. 3.5]{Geck} it then holds that
 \be\label{pmpm-eq} \chi^{[\nu,+]}(w_\fpf) - \chi^{[\nu,-]}(w_\fpf)  = 2^{n/2}\chi^\nu(1).\ee 
  This follows the convention of the data returned by the {\tt CharacterTable("WeylD", n)} command in \textsf{GAP} \cite{GAP}.
 
In this notation, the linear characters of $\WD_n$ are
$ \one = \chi^{\{(n),\emptyset\}} $ and
$ \sgn = \chi^{\{(1,1,\dots,1),\emptyset\}}$, together with $ \one_{-+} = \chi^{[(1),+]} $ and $ \one_{+-} = \chi^{[(1),-]} $
when $n=2$.
 As explained in \cite[Lem. 3.5]{Geck} (correcting an error in \cite[Rem. 5.6.5]{GeckP}), one has
\be\label{sgn-d-eq} \chi^{\{\lambda,\mu\}} \sgn =  \chi^{\{\lambda^\top,\mu^\top\}}
\quand  \chi^{[\nu,\pm]} \sgn =  \begin{cases}
\chi^{[\nu^\top,\pm]} &\text{if $n/2$ is even} \\
\chi^{[\nu^\top,\mp]} &\text{if $n/2$ is odd}.
\end{cases}
\ee
If $\lambda\neq \mu$ then $(\chi^{\{\lambda,\mu\}})^\diamond = \chi^{\{\lambda,\mu\}}$
and $(\chi^{[\nu,\pm]})^\diamond = \chi^{[\nu,\mp]}$.
 Finally, note that
 \be\label{bcd-reciprocity}
 \Ind_{\WD_n}^{\W_n}(\chi^{\{\lambda,\mu\}}) = \chi^{(\lambda,\mu)} + \chi^{(\mu,\lambda)}
 \quand
 \Ind_{\WD_n}^{\W_n}(\chi^{[\nu,\pm]}) = \chi^{(\nu,\nu)}
 \ee
 for all $\{\lambda,\mu\} \vdash n$ and $\nu \vdash n/2$ by Frobenius reciprocity.

 
Let $p,q \in \NN$ and recall that $s_{-p} := (p,-p-1)(p+1,-p)$ if $p>0$. Define 
\[ J = \begin{cases}
\varnothing &\text{if }p\leq 1 \\
\{s_{-1},s_1,\dots,s_{p-1}\} &\text{if }p\geq 2
\end{cases}
\quand
K = \begin{cases}
\varnothing &\text{if }q\leq 1 \\
\{s_{-p-1},s_{p+1},s_{p+2},\dots,s_{p+q-1}\} &\text{if }q\geq 2.
\end{cases}
\]
We identify $\WD_p\times \WD_q$ with the subgroup $\langle J\sqcup K \rangle \subseteq \WD_{p+q}$.
Write $u\times v$ for the image of $(u,v)  \in \WD_p\times \WD_q$ in $\WD_{p+q}$.
Given maps $f : \WD_p \to \CC$ and $g : \WD_q \to \CC$
 define $f\boxtimes g : \WD_p\times \WD_q \to \CC$ by the formula $u\times v \mapsto f(u)g(v)$.
When $f$ and $g$ are class functions, let
\be f\bullet_\D g := \Ind_{\WD_p\times \WD_q}^{\WD_{p+q}}(f\boxtimes g)
.\ee
This is another associative  and bilinear operation.
If $\Theta = \left[\begin{smallmatrix}  \alpha_0& \alpha_1  \\ 
\beta_0 & \beta_1   \\ 
\gamma_0 & \gamma_1
\end{smallmatrix}\right]\in \MTI(\WD_n)$ then 
\be\label{d-theta-bullet-eq}
\chi_\D^\Theta = 
\chi_\D^{ \left[\begin{smallmatrix} \alpha_0  \\ 
\beta_0   \\ 
\gamma_0 
\end{smallmatrix}\right]}
\bullet_\D 
\Ind_{S_{\alpha_1}}^{\WD_{\alpha_1}}\Bigl(\chi_\A^{ \left[\begin{smallmatrix} \alpha_1  \\ 
\beta_1   \\ 
\gamma_1 
\end{smallmatrix}\right]}\Bigr)
\quad\text{where we define }\chi_\D^{ \left[\begin{smallmatrix} \alpha_0  \\ 
\beta_0   \\ 
\gamma_0 
\end{smallmatrix}\right]} := \chi_\D^{ \left[\begin{smallmatrix} \alpha_0 & 0  \\ 
\beta_0  & \id \\ 
\gamma_0  & \one
\end{smallmatrix}\right]}
\ee
and where we interpret the characters on the right as the trivial characters of 
$\WD_0:=\{1\}$ or $S_0:=\{1\}$
 when $\alpha_0=0$ or $\alpha_1=0$.
We need to explain how to evaluate this formula when
 $\alpha_0=0$ and $\alpha_1=-n<0$. For that case, we define
$ S_{-n} := S_n^\diamond = \langle s_{-1} ,s_2,s_3,\dots,s_{n-1}\rangle\subseteq \WD_n,$
 viewing $S_{-1}=\{1\},$
and extend any $f: S_n \to \CC$ 
to a map $S_{-n} \cup S_n \to \CC$ by setting $f(w) := f(w^\diamond)$ for $w \in S_{-n}$.
This is well-defined as $\diamond$ fixes every element of $S_{-n}\cap S_n \cong S_{n-1}$.

There are coefficients $d_{\Lambda\Gamma}^\Upsilon \in \NN$
whenever $\Lambda$, $\Gamma$, and $\Upsilon$ are unordered bipartitions or symbols of the form 
$[\nu,\pm]$
such that
\be\label{LR-d-eq} \chi^\Lambda \bullet_\D \chi^\Gamma
= \sum_{\Upsilon} d_{\Lambda\Gamma}^\Upsilon \chi^\Upsilon.\ee
Taylor \cite[Prop. 2.7]{Taylor} has shown how to express these numbers 
in terms of the Littlewood-Richardson coefficients $c_{\lambda\mu}^\nu$ from Section~\ref{LR-sect1}.
Namely, 
if $\lambda_1\neq \lambda_2$, $\lambda$, $\mu_1\neq \mu_2$, $\mu$,  $\nu_1\neq \nu_2$, $\nu$ are partitions and $\epsilon_\lambda,\epsilon_\mu,\epsilon_\nu \in \{ \pm \}$ are signs then we have:
\ben
\item[(a)] $ d_{\{\lambda_1,\lambda_2\} \{\mu_1,\mu_2\}}^{\{\nu_1,\nu_2\}}
= c_{\lambda_1\mu_1}^{\nu_1} c_{\lambda_2\mu_2}^{\nu_2} 
+
c_{\lambda_1\mu_2}^{\nu_1} c_{\lambda_2\mu_1}^{\nu_2} 
+
c_{\lambda_2\mu_1}^{\nu_1} c_{\lambda_1\mu_2}^{\nu_2} 
+
c_{\lambda_2\mu_2}^{\nu_1} c_{\lambda_2\mu_2}^{\nu_2} $,

\item[(b)] $d_{[\lambda,\epsilon_\lambda] \{\mu_1,\mu_2\}}^{\{\nu_1,\nu_2\}}
=
d_{\{\mu_1,\mu_2\}[\lambda,\epsilon_\lambda]}^{\{\nu_1,\nu_2\}}
= c_{\lambda\mu_1}^{\nu_1} c_{\lambda\mu_2}^{\nu_2} 
+
c_{\lambda\mu_2}^{\nu_1} c_{\lambda\mu_1}^{\nu_2} $,

\item[(c)] $d_{[\lambda,\epsilon_\lambda] [\mu,\epsilon_\mu]}^{\{\nu_1,\nu_2\}}
= c_{\lambda\mu}^{\nu_1} c_{\lambda\mu}^{\nu_2}$,

\item[(d)] $d_{\{\lambda_1,\lambda_2\} \{\mu_1,\mu_2\}}^{[\nu,\epsilon_\nu]}
= c_{\lambda_1\mu_1}^{\nu} c_{\lambda_2\mu_2}^{\nu} 
+
c_{\lambda_1\mu_2}^{\nu} c_{\lambda_2\mu_1}^{\nu}  $,

\item[(e)] $d_{[\lambda,\epsilon_\lambda] \{\mu_1,\mu_2\}}^{[\nu,\epsilon_\nu]}
=
d_{\{\mu_1,\mu_2\}[\lambda,\epsilon_\lambda]}^{[\nu,\epsilon_\nu]}
= c_{\lambda\mu_1}^{\nu} c_{\lambda\mu_2}^{\nu} $, and

\item[(f)] $d_{[\lambda,\epsilon_\lambda] [\mu,\epsilon_\mu]}^{[\nu,\epsilon_\nu]}
= \tfrac{1}{2} c_{\lambda\mu}^{\nu} (c_{\lambda\mu}^{\nu} + \epsilon_1\epsilon_2\epsilon_3)  $
where
to evaluate $\epsilon_1\epsilon_2\epsilon_3 \in \{\pm 1\}$ we replace $\pm$ by $\pm1$.\footnote{
This formula still holds if one defines $\chi^{[\nu,\pm]}$ such that $\chi^{[\nu,+]}(w_\fpf) - \chi^{[\nu,-]}(w_\fpf) = (-2)^{n/2} \chi^\nu(1)$ as in \cite[\S3]{Geck}, since switching to this convention would reverse either zero or two
  of the signs $\epsilon_1$, $\epsilon_2$, $\epsilon_3$.}
\een
Let $\nu\vdash n$.
In view of \eqref{LR-ba-eq} and \eqref{bcd-reciprocity},
if $n$ is odd then
\be \label{LR-da-eq1}
\Ind_{S_n}^{\WD_n}(\chi^\nu)=\Ind_{S_{-n}}^{\WD_n}(\chi^\nu) = \sum_{\{\lambda,\mu\}\vdash n} c_{\lambda\mu}^\nu \chi^{\{\lambda,\mu\}}
\ee
while if $n$ is even then
there are numbers $c_{[\lambda,\pm]}^\nu \in \NN$ for  $\lambda \vdash \frac{n}{2}$ 
with $c_{[\lambda,+]}^\nu+ c_{[\lambda,-]}^\nu = c^{\nu}_{\lambda\lambda}$ and
\be\label{LR-da-eq2}
\ba
\Ind_{S_{\pm n}}^{\WD_n}(\chi^\nu) &= \sum_{\{\lambda,\mu\}\vdash n} c_{\lambda\mu}^\nu \chi^{\{\lambda,\mu\}}
+ \sum_{\lambda\vdash \frac{n}{2}} \(c_{[\lambda,+]}^\nu \chi^{[\lambda,\pm ]} +  c_{[\lambda,-]}^\nu \chi^{[\lambda,\mp]}\).
\ea
\ee
As noted in \cite[\S1]{Taylor},
it appears to be an open problem to give a general formula for 
$c_{[\lambda,\pm]}^\nu$.
One can compute these numbers without much difficulty 
when $c_{\lambda\lambda}^\nu=1$. 
For example:

\begin{proposition}\label{pieri-d-prop} If $n$ is even then 
$
\Ind_{S_{ \pm n}}^{\WD_n}(\one) =  \sum_{p=0}^{\frac{n}{2}-1} \chi^{\{(p),(n-p)\}}  +  \chi^{[(\frac{n}{2}),\pm]}
$
and
\[
\Ind_{S_{ \pm n}}^{\WD_n}(\sgn) =    \sum_{p=0}^{\frac{n}{2}-1} \chi^{\{(1^p),(1^{n-p})\}} + \begin{cases}
\chi^{[(1^{n/2}),\pm]} &\text{if $n/2$ is even} \\
\chi^{[(1^{n/2}),\mp]} &\text{if $n/2$ is odd.}
\end{cases}
\]
\end{proposition}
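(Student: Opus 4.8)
The plan is to read everything off from the branching formula \eqref{LR-da-eq2} with $\nu=(n)$ (so that $\chi^\nu=\one$ on $S_n$) together with the normalization \eqref{pmpm-eq}. First I would handle the non-degenerate constituents: by Frobenius reciprocity $c_{\lambda\mu}^{(n)}=\langle\chi^\lambda\boxtimes\chi^\mu,\Res^{S_n}_{S_{|\lambda|}\times S_{|\mu|}}\chi^{(n)}\rangle=\langle\chi^\lambda,\one\rangle\langle\chi^\mu,\one\rangle$, so $c_{\lambda\mu}^{(n)}=1$ exactly when $\{\lambda,\mu\}=\{(p),(n-p)\}$ for some $p$, and $c_{\lambda\mu}^{(n)}=0$ otherwise; summing over unordered bipartitions $\{\lambda,\mu\}\vdash n$ with $\lambda\neq\mu$ thus contributes precisely $\sum_{p=0}^{n/2-1}\chi^{\{(p),(n-p)\}}$. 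For the degenerate part, the only $\lambda\vdash n/2$ with $c_{\lambda\lambda}^{(n)}\neq 0$ is $\lambda=(n/2)$, and then $c_{[(n/2),+]}^{(n)}+c_{[(n/2),-]}^{(n)}=c_{(n/2)(n/2)}^{(n)}=1$, so exactly one of these two numbers equals $1$. Substituting into \eqref{LR-da-eq2} this already gives $\Ind_{S_{\pm n}}^{\WD_n}(\one)=\sum_{p=0}^{n/2-1}\chi^{\{(p),(n-p)\}}+\chi^{[(n/2),\epsilon]}$ for one sign $\epsilon$; it remains only to show $c_{[(n/2),+]}^{(n)}=1$, equivalently $\epsilon=+$ in the case $S_{+n}=S_n$.

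To pin down $\epsilon$ I would evaluate the identity $\Ind_{S_n}^{\WD_n}(\one)-\sum_{p=0}^{n/2-1}\chi^{\{(p),(n-p)\}}=\chi^{[(n/2),\epsilon]}$ at the element $w_\fpf=s_1s_3\cdots s_{n-1}\in S_n\subseteq\WD_n$ appearing in \eqref{pmpm-eq}. Since $\chi^{\{(p),(n-p)\}}=\Res^{\W_n}_{\WD_n}\chi^{((p),(n-p))}=\Res^{\W_n}_{\WD_n}\chi^{((n-p),(p))}$ by the definition of $\chi^{\{\lambda,\mu\}}$ in Section~\ref{LR-d-sect}, and $w_\fpf\in\WD_n$, the identity $\sum_{p+q=n}\chi^{((p),(q))}=\Ind_{S_n}^{\W_n}(\one)$ from \eqref{pieri-bc-eq} yields $\sum_{p=0}^{n/2-1}\chi^{\{(p),(n-p)\}}(w_\fpf)=\tfrac12\Ind_{S_n}^{\W_n}(\one)(w_\fpf)-\tfrac12\chi^{((n/2),(n/2))}(w_\fpf)$; and from $\chi^{[(n/2),+]}+\chi^{[(n/2),-]}=\Res^{\W_n}_{\WD_n}\chi^{((n/2),(n/2))}$ together with \eqref{pmpm-eq} and $\chi^{(n/2)}(1)=1$ one gets $\chi^{[(n/2),\epsilon]}(w_\fpf)=\tfrac12\chi^{((n/2),(n/2))}(w_\fpf)+\epsilon\,2^{n/2-1}$. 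The $\chi^{((n/2),(n/2))}(w_\fpf)$ terms cancel, so the relation at $w_\fpf$ reduces to $\epsilon\,2^{n/2-1}=\Ind_{S_n}^{\WD_n}(\one)(w_\fpf)-\tfrac12\Ind_{S_n}^{\W_n}(\one)(w_\fpf)$.

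It then remains to compute these two permutation-character values, which I would do by the coset count \eqref{ind-eq}: both equal $\tfrac{1}{|S_n|}$ times the number of $h$ in $\WD_n$ (resp.\ $\W_n$) with $h\,w_\fpf\,h^{-1}\in S_n$. Writing a signed permutation $h$ by its underlying permutation of $\{1,\dots,n\}$ and a sign vector $(\varepsilon_1,\dots,\varepsilon_n)$, the condition $h\,w_\fpf\,h^{-1}\in S_n$ says exactly that $\varepsilon_{2k-1}=\varepsilon_{2k}$ for each of the $n/2$ pairs $\{2k-1,2k\}$ transposed by $w_\fpf$; these are $n/2$ independent constraints, giving $n!\,2^{n/2}$ such $h$, each with an even number of negative signs, hence all lying in $\WD_n$. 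Therefore $\Ind_{S_n}^{\W_n}(\one)(w_\fpf)=\Ind_{S_n}^{\WD_n}(\one)(w_\fpf)=2^{n/2}$, so $\epsilon\,2^{n/2-1}=2^{n/2}-2^{n/2-1}=2^{n/2-1}$ and $\epsilon=+$. Feeding $c_{[(n/2),+]}^{(n)}=1$, $c_{[(n/2),-]}^{(n)}=0$ back into \eqref{LR-da-eq2} gives the claimed formula for $\Ind_{S_{\pm n}}^{\WD_n}(\one)$ in both signs; for $\sgn$ I would use that $\Res^{\WD_n}_{S_{\pm n}}(\sgn)$ is the sign character $\chi^{(1^n)}$ of $S_{\pm n}$, so the projection formula gives $\Ind_{S_{\pm n}}^{\WD_n}(\sgn)=\Ind_{S_{\pm n}}^{\WD_n}(\one)\cdot\sgn$, and \eqref{sgn-d-eq} turns each $\chi^{\{(p),(n-p)\}}$ into $\chi^{\{(1^p),(1^{n-p})\}}$ and $\chi^{[(n/2),\pm]}$ into $\chi^{[(1^{n/2}),\pm]}$ when $n/2$ is even and into $\chi^{[(1^{n/2}),\mp]}$ when $n/2$ is odd, which is exactly the stated formula. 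The only step that is not pure bookkeeping with the Littlewood--Richardson/Pieri data and the branching rules of Section~\ref{LR-d-sect} is the determination of $\epsilon$: distinguishing $\chi^{[(n/2),+]}$ from $\chi^{[(n/2),-]}$ genuinely forces one back to the normalization \eqref{pmpm-eq} and the coset count at $w_\fpf$, and that is where I expect the only real (if small) obstacle to lie.
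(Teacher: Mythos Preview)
Your proposal is correct and follows essentially the same strategy as the paper: reduce the $\sgn$ case to the $\one$ case via \eqref{sgn-d-eq}, identify the non-degenerate constituents by Pieri-type arguments, and pin down the degenerate sign by evaluating at $w_\fpf$ using \eqref{pmpm-eq} together with the coset formula \eqref{ind-eq}. The only cosmetic difference is in how the coset count is packaged: the paper compares $\Ind_{S_n}^{\WD_n}(\one)(w_\fpf)$ with $\Ind_{S_{-n}}^{\WD_n}(\one)(w_\fpf)$ and observes that $g w_\fpf g^{-1}\notin S_{-n}$ for all $g\in\WD_n$, whereas you compare $\Ind_{S_n}^{\WD_n}(\one)(w_\fpf)$ with $\Ind_{S_n}^{\W_n}(\one)(w_\fpf)$ and observe that every $h\in\W_n$ with $h w_\fpf h^{-1}\in S_n$ already lies in $\WD_n$; these are equivalent statements (one is obtained from the other by conjugating by $s_0$), so the arguments coincide in substance.
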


\begin{proof}
The second formula follows from the first by \eqref{sgn-d-eq}.
In view of \eqref{pieri-bc-eq} we just need show that
$ \chi^{[(\frac{n}{2}),+]}$ is a constituent of 
$\Ind_{S_{  n}}^{\WD_n}(\one)$ and $ \chi^{[(\frac{n}{2}),-]}$ is a constituent of 
$\Ind_{S_{ - n}}^{\WD_n}(\one)$. For this,
it is enough by \eqref{pmpm-eq} to check that 
$\Ind_{S_{ n}}^{\WD_n}(\one)(w_\fpf) - \Ind_{S_{ -n}}^{\WD_n}(\one)(w_\fpf) >0$
for $w_\fpf := s_1s_3\cdots s_{n-1}$.
This follows from
 \eqref{ind-eq} since   $g\cdot w_\fpf\cdot g^{-1} \notin S_{-n}$ for all $g \in \WD_n$.
\end{proof}

Let $\EvenDRows(n)$ be the set of unordered bipartitions $\{ \lambda,\mu\}\vdash n$
where $\lambda \neq \mu$ have all even parts.
Let $\EvenDCols(n) = \{ \{\lambda^\top, \mu^\top\} : \{\lambda,\mu\} \in \EvenDRows(n)\}$. 

\begin{proposition}\label{ddd-prop1}
Suppose $n$ is even, $\beta \in \{\fpf,\fpf^\diamond\}$,
and $\Theta =  \left[\begin{smallmatrix} n  \\ 
\beta  \\ 
\gamma  
\end{smallmatrix}\right]$. 
  If $\gamma = \one$ then 
\[\chi_\D^{\Theta}
= 
\sum_{\{\lambda,\mu\} \in \EvenDRows(n)} \chi^{\{\lambda,\mu\}} +
\begin{cases}
 \sum_{\nu \in \EvenRows(n/2)} \chi^{[\nu,+]}
&\text{if }\beta=\fpf \\
 \sum_{\nu \in \EvenRows(n/2)} \chi^{[\nu,-]}
&\text{if }\beta=\fpf^\diamond
\end{cases}\]
 and if $\gamma = \sgn$ then 
\[\chi_\D^{\Theta}
= 
\sum_{\{\lambda,\mu\} \in \EvenDCols(n)} \chi^{\{\lambda,\mu\}} +
\begin{cases}
 \sum_{\nu \in \EvenCols(n/2)} \chi^{[\nu,+]}
&\text{if $\beta=\fpf$}  \\
 \sum_{\nu \in \EvenCols(n/2)} \chi^{[\nu,-]} 
&\text{if $\beta=\fpf^\diamond$} 
\end{cases}\]
%
where the sums over $\nu \in \EvenRows(n/2)$ and $\nu \in \EvenCols(n/2)$ are zero when $n\not\equiv 0 \modu 4)$.
\end{proposition}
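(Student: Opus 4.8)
The plan is to deduce the formula from the type-$\BC$ identity \eqref{bbb-prop1} by inducing along $\WD_n\subseteq\W_n$, and then to pin down the labels on the degenerate constituents by a signed point count. Write $w_\fpf := s_1s_3s_5\cdots s_{n-1}$, the minimal-length element of $\cK^{\WD_n}_\fpf$, and put $C := C_{\W_n}(w_\fpf)$. First I would check that $C\subseteq\WD_n$, so that $C = C_{\WD_n}(w_\fpf)$: since $w_\fpf$ exchanges $2k-1\leftrightarrow 2k$ and $-(2k-1)\leftrightarrow -2k$ for $1\le k\le n/2$, any $g\in\W_n$ commuting with $w_\fpf$ permutes the blocks $\{2k-1,2k\}$ and carries the two entries of one block to the two entries of another with a common sign, hence makes an even number of sign changes. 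Because $\Res^{\W_n}_C(\gamma) = \Res^{\WD_n}_C(\gamma)$ for $\gamma\in\{\one,\sgn\}$, transitivity of induction, the definition of $\chi_\BC^{\left[\begin{smallmatrix} n\\\fpf\\\one\end{smallmatrix}\right]}$, and \eqref{bbb-prop1} give, for $\beta\in\{\fpf,\fpf^\diamond\}$,
\[
\Ind_{\WD_n}^{\W_n}\bigl(\chi_\D^{\left[\begin{smallmatrix} n\\\beta\\\one\end{smallmatrix}\right]}\bigr) = \Ind_{C}^{\W_n}\Res^{\W_n}_C(\one) = \chi_\BC^{\left[\begin{smallmatrix} n\\\fpf\\\one\end{smallmatrix}\right]} = \sum_{(\lambda,\mu)\in\EvenBiRows(n)}\chi^{(\lambda,\mu)},
\]
where for $\beta = \fpf^\diamond$ one uses that the minimal-length element of $\cK^{\WD_n}_{\fpf^\diamond}$ is $w_\fpf^\diamond = s_0w_\fpf s_0$, whose $\W_n$-centralizer is $s_0Cs_0\subseteq\WD_n$ since $s_0$ normalizes $\WD_n$.

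Combining this with the branching rule \eqref{bcd-reciprocity} and the fact that $\chi^{(\lambda,\mu)}$ and $\chi^{(\mu,\lambda)}$ lie in $\EvenBiRows(n)$ simultaneously, one sees that $\chi_\D^{\left[\begin{smallmatrix} n\\\fpf\\\one\end{smallmatrix}\right]}$ is multiplicity-free and equals
\[
\sum_{\{\lambda,\mu\}\in\EvenDRows(n)}\chi^{\{\lambda,\mu\}} + \sum_{\nu\in\EvenRows(n/2)}\chi^{[\nu,\,\epsilon(\nu)]}
\]
for some function $\epsilon\colon\EvenRows(n/2)\to\{+,-\}$ (here $\EvenRows(n/2) = \varnothing$ unless $4\mid n$). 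Applying the Coxeter automorphism $\diamond$, and using $(\chi^{\{\lambda,\mu\}})^\diamond = \chi^{\{\lambda,\mu\}}$ and $(\chi^{[\nu,\pm]})^\diamond = \chi^{[\nu,\mp]}$, gives the same expression for $\chi_\D^{\left[\begin{smallmatrix} n\\\fpf^\diamond\\\one\end{smallmatrix}\right]}$ with $\epsilon$ replaced by $-\epsilon$; so it remains to show $\epsilon\equiv+$.

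For this I would evaluate at $w_\fpf$. Since $\chi_\D^{\left[\begin{smallmatrix} n\\\fpf\\\one\end{smallmatrix}\right]}$ and $\chi_\D^{\left[\begin{smallmatrix} n\\\fpf^\diamond\\\one\end{smallmatrix}\right]}$ are the permutation characters of $\WD_n$ acting by conjugation on $\cK^{\WD_n}_\fpf$ and $\cK^{\WD_n}_{\fpf^\diamond}$, and these two classes consist of the fixed-point-free involutions $z\in\W_n$ with $|z(i)|\ne i$ for all $i$ and with $\delta(z)\equiv 0$, respectively $\equiv 2$, modulo $4$ (writing $\delta(z) := |\{i\in[n]:z(i)<0\}|$), we obtain
\[
\chi_\D^{\left[\begin{smallmatrix} n\\\fpf\\\one\end{smallmatrix}\right]}(w_\fpf) - \chi_\D^{\left[\begin{smallmatrix} n\\\fpf^\diamond\\\one\end{smallmatrix}\right]}(w_\fpf) = \sum_z (-1)^{\delta(z)/2},
\]
the sum over fixed-point-free involutions $z\in\W_n$ with $|z(i)|\ne i$ that commute with $w_\fpf$. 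Each such $z$ lies in $C$ and hence permutes the blocks $\{2k-1,2k\}$; let $\pi(z)\in S_{n/2}$ be the resulting involution. On a block fixed by $\pi(z)$, commutation with $w_\fpf$ together with $z(i)\ne\pm i$ forces $z$ either to exchange $2k-1\leftrightarrow 2k$ (contributing $0$ to $\delta(z)$) or to exchange $2k-1\leftrightarrow -2k$ and $2k\leftrightarrow-(2k-1)$ (contributing $2$); on a pair of blocks interchanged by $\pi(z)$ there are exactly four possibilities for $z$, two contributing $0$ and two contributing $4$. Since $\delta(z)$ is the sum of the block contributions, the signed count factors over the cycles of $\pi(z)$: a fixed point contributes $(-1)^0+(-1)^1 = 0$ and a transposition contributes $(-1)^0+(-1)^0+(-1)^2+(-1)^2 = 4$. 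Hence the sum vanishes unless $\pi(z)$ is itself fixed-point-free (which forces $4\mid n$), in which case it equals $4^{n/4}$ times the number of fixed-point-free involutions in $S_{n/2}$, namely $2^{n/2}(n/2-1)!!$. On the other hand, by \eqref{pmpm-eq} the left-hand side is $\sum_{\nu\in\EvenRows(n/2)}\epsilon(\nu)\,2^{n/2}\chi^\nu(1)$ (reading each $\epsilon(\nu)$ as $\pm1$), and by \eqref{irs-eq} the character $\sum_{\nu\in\EvenRows(n/2)}\chi^\nu$ is induced to $S_{n/2}$ from the trivial character of the centralizer of a fixed-point-free involution, so $\sum_{\nu\in\EvenRows(n/2)}\chi^\nu(1) = (n/2-1)!!$. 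Comparing the two expressions forces $\epsilon(\nu) = +$ for every $\nu$, which gives the formulas for $\gamma = \one$.

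The remaining $\gamma = \sgn$ formulas follow by twisting with $\sgn$: the projection formula gives $\chi_\D^{\left[\begin{smallmatrix} n\\\beta\\\sgn\end{smallmatrix}\right]} = \sgn\cdot\chi_\D^{\left[\begin{smallmatrix} n\\\beta\\\one\end{smallmatrix}\right]}$, while \eqref{sgn-d-eq} carries $\chi^{\{\lambda,\mu\}}$ to $\chi^{\{\lambda^\top,\mu^\top\}}$ and, when $n/2$ is even (the only case in which degenerate terms occur), $\chi^{[\nu,+]}$ to $\chi^{[\nu^\top,+]}$; since $\EvenDRows(n)^\top = \EvenDCols(n)$ and $\EvenRows(n/2)^\top = \EvenCols(n/2)$, this yields exactly the asserted expressions. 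I expect the main obstacle to be the block-by-block signed count above — especially verifying that on a block fixed by $\pi(z)$ precisely the two contributions $0$ and $2$ occur — since this is what makes the degenerate characters disappear when $n\equiv 2\pmod 4$ and appear with the sign $+$ when $4\mid n$.
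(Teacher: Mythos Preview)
Your proposal is correct and follows essentially the same approach as the paper: reduce to $\W_n$ via the equality of centralizers to invoke \eqref{bbb-prop1}, use \eqref{bcd-reciprocity} to get the form with undetermined signs $\epsilon(\nu)$, and then pin down the signs by evaluating the difference of the two permutation characters at $w_\fpf$ via a block count. The only cosmetic difference is in the combinatorics: the paper computes $|\cK^{\WD_n}_\fpf\cap C_\fpf|-|\cK^{\WD_n}_{\fpf^\diamond}\cap C_\fpf|$ by exhibiting a bijection from the second set onto the subset of the first with at least one fixed block, whereas you compute the same quantity as a signed sum that factors over cycles of $\pi(z)$ --- both yield $4^{n/4}$ times the number of fixed-point-free involutions of $[n/2]$.
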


\begin{proof}
Since $\chi_\D^{\Theta^\diamond}=(\chi_\D^\Theta)^\diamond$ and $\chi_\D^{\overline\Theta}=\sgn\chi_\D^\Theta $,
we may assume that $\beta =\fpf$ and $\gamma=\one$.
The centralizers of $s_{ 1}s_3\cdots s_{n-1}$ in $\WD_n$ and $\W_n$ coincide, so $\Ind_{\WD_n}^{\W_n}(\chi_\D^\Theta) = \chi_\BC^{\Theta}$.
By \eqref{bbb-prop1} and \eqref{bcd-reciprocity} we therefore have
$\chi_\D^{\Theta}
= 
\sum_{\{\lambda,\mu\} \in \EvenDRows(n)} \chi^{\{\lambda,\mu\}} +
 \sum_{\nu \in \EvenRows(n/2)} \chi^{[\nu,\epsilon_\nu]}$ for some choice of  $\epsilon_\nu \in \{\pm \}$.
 To show that $\epsilon_\nu$ is always $+$ it suffices by \eqref{pmpm-eq} to check that if $n$ is divisible by $4$
then
\[
\Ind_{C_\fpf}^{\WD_n}(\one)(w_\fpf) - \Ind_{C^\diamond_\fpf}^{\WD_n}(\one)(w_\fpf)=
2^{n/2} \sum_{\nu \in \EvenRows(n/2)} \chi^\nu(1)
\]
where $C_\fpf$ is the $\WD_n$-centralizer of $w_\fpf = s_1s_3\cdots s_{n-1}$
and $C_\fpf^\diamond = (C_\fpf)^\diamond$ is the $\WD_n$-centralizer of $w^\diamond_\fpf := s_{-1}s_3\cdots s_{n-1}$.
Because $|C_\fpf| = |C_\fpf^\diamond|$, the induced character formula \eqref{ind-eq} gives 
\[\ba
\Ind_{C_\fpf}^{\WD_n}(\one)(w_\fpf) - \Ind_{C^\diamond_\fpf}^{\WD_n}(\one)(w_\fpf) &= |\cK^{\WD_n}_{\fpf} \cap C_\fpf| - |\cK^{\WD_n}_{\fpf} \cap C_\fpf^\diamond| \\&=  |\cK^{\WD_n}_{\fpf} \cap C_\fpf|-  |\cK^{\WD_n}_{\fpf^\diamond} \cap C_\fpf|.\ea
\]
Let $A_i^+ := \{2i-1,2i\}$, $A_i^- := \{1-2i,-2i\}$, and $A_i := A_i^+ \sqcup A_i^-$ for $i \in [n/2]$. Each $w \in C_\fpf$
determines a permutation of $A_1,A_2,\dots,A_{n/2}$. If $w \in  \cK^{\WD_n}_{\fpf}\cap C_\fpf$
(respectively,  $w \in \cK^{\WD_n}_{\fpf^\diamond}\cap C_\fpf$)
then this permutation is an involution with $w(A^+_i)=  A^\pm_j$ whenever $w(A_i) = A_j$,
such that the number of  $i \in [n]$ with $w(A_i^+) = A_i^-$ is even
(respectively, odd).
These properties characterize the elements of  $\cK^{\WD_n}_{\fpf}\cap C_\fpf$ and $\cK^{\WD_n}_{\fpf^\diamond}\cap C_\fpf$,
and so there is a bijection from $\cK^{\WD_n}_{\fpf^\diamond}\cap C_\fpf$ 
to the subset of elements $w \in \cK^{\WD_n}_{\fpf}\cap C_\fpf$ with $w(A_i) = A_i$ for at least one $i \in [n/2]$.
Thus $ |\cK^{\WD_n}_{\fpf} \cap C_\fpf|-  |\cK^{\WD_n}_{\fpf^\diamond} \cap C_\fpf|$
is the number of $w \in \WD_n$ with $A_i =w^2(A_i)\neq w(A_i) \in \{A_1,A_2,\dots,A_{n/2}\}$ for each $i \in [n/2]$,
such that if $w(A_i) = A_j$ then $w : A_i \to A_j$ is any of the four bijections with $w(A_i^+) = A_j^\pm$.
The number of such permutations is $4^{n/4}$ times the number of fixed-point-free involutions of $[n/2]$.
This product is $2^{n/2} \sum_{\nu \in \EvenRows(n/2)} \chi^\nu(1)$ by \eqref{irs-eq} as needed.
\end{proof}


\begin{proposition}\label{ddd-prop2}
Suppose $p$ and $q$ are positive integers with $p+q=n$.
Then 
\[
\ds\chi_\D^{ \left[\begin{smallmatrix} n   \\ 
(p,q)  \\ 
\one  
\end{smallmatrix}\right]}
= \sum_{j=0}^{\min(p,q)}  \chi^{\{(n-j,j),\emptyset\}}
\quand 
\ds\chi_\D^{ \left[\begin{smallmatrix} n   \\ 
(p,q)  \\ 
\sgn  
\end{smallmatrix}\right]}
= \sum_{j=0}^{\min(p,q)} \chi^{\{(2^j,1^{n-j}),\emptyset\}}
.\]
\end{proposition}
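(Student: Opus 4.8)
The plan is to deduce both formulas from the type-$\BC$ computation in Proposition~\ref{bbb-prop2} together with the branching rule $\Res^{\W_n}_{\WD_n}(\chi^{(\lambda,\mu)}) = \chi^{\{\lambda,\mu\}}$ for $\lambda\neq\mu$ recorded in Section~\ref{LR-d-sect}. Recall that $\chi_\D^{\left[\begin{smallmatrix} n\\(p,q)\\\one\end{smallmatrix}\right]} = \Ind_{C_{\WD_n}(z)}^{\WD_n}(\one)$, where $z$ is the unique minimal-length element of $\cK_{(p,q)}^{\WD_n}$. The first step is to identify this centralizer. Write $w_0 := \bar1\bar2\cdots\bar q(q+1)(q+2)\cdots n$ and let $C' := \W_q\times\W_p$ be its centralizer in $\W_n$, identified as in the proof of Proposition~\ref{bbb-prop2}. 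If $q$ is even then $z = w_0\in\WD_n$, so $C_{\WD_n}(z) = C'\cap\WD_n$. If $q$ is odd then $z = (w_0 s_0,\diamond)$ with $\diamond = \Ad(s_0)$, and for $g\in\WD_n$ one computes $g\cdot(w_0 s_0)\cdot\diamond(g)^{-1} = g\cdot(w_0 s_0)\cdot s_0 g^{-1} s_0 = (g w_0 g^{-1})\, s_0$; hence $gzg^{-1}=z$ if and only if $g w_0 g^{-1}=w_0$, so again $C_{\WD_n}(z) = C'\cap\WD_n$.

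Next I would apply Mackey's restriction theorem with the subgroups $\WD_n, C'\leq\W_n$. Since $p,q>0$, the group $C'=\W_q\times\W_p$ contains elements with an odd number of sign changes (for instance the generator $s_0$ of its $\W_q$ factor), so $C'\not\subseteq\WD_n$ and $C'\WD_n=\W_n$; thus $\WD_n\backslash\W_n/C'$ is a single double coset, and Mackey's formula yields
\[
\chi_\D^{\left[\begin{smallmatrix} n\\(p,q)\\\one\end{smallmatrix}\right]} = \Ind_{C'\cap\WD_n}^{\WD_n}(\one) = \Res^{\W_n}_{\WD_n}\bigl(\Ind_{C'}^{\W_n}(\one)\bigr).
\]
By the proof of Proposition~\ref{bbb-prop2} we have $\Ind_{C'}^{\W_n}(\one) = \chi_\BC^{\left[\begin{smallmatrix} n\\(p,q)\\\one\end{smallmatrix}\right]} = \sum_{\lambda\in\Lambda(p,q)}\chi^{(\lambda,\emptyset)}$, and the substitution $j=\min\{p,q\}-r$ rewrites $\Lambda(p,q)$ as $\{(n-j,j):0\le j\le\min(p,q)\}$. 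Each $(n-j,j)$ is nonempty, so restriction to $\WD_n$ sends $\chi^{((n-j,j),\emptyset)}$ to the irreducible character $\chi^{\{(n-j,j),\emptyset\}}$, which gives the first formula.

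For the second formula I would multiply by $\sgn$. Using $\chi_\D^{\overline\Theta}=\sgn\cdot\chi_\D^{\Theta}$ and the fact that $\overline\Theta$ is obtained from $\Theta$ by replacing the bottom row by its product with $\sgn$, we get $\chi_\D^{\left[\begin{smallmatrix} n\\(p,q)\\\sgn\end{smallmatrix}\right]} = \sgn\cdot\chi_\D^{\left[\begin{smallmatrix} n\\(p,q)\\\one\end{smallmatrix}\right]} = \sum_{j=0}^{\min(p,q)}\sgn\cdot\chi^{\{(n-j,j),\emptyset\}} = \sum_{j=0}^{\min(p,q)}\chi^{\{(2^j,1^{n-2j}),\emptyset\}}$, where the last equality is \eqref{sgn-d-eq} together with $(n-j,j)^\top=(2^j,1^{n-2j})$. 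I expect the only real obstacle to be the odd-$q$ centralizer computation: one must carefully unwind the twisted conjugation action of $\WD_n$ on the coset $(\WD_n)^+\setminus\WD_n$ and exploit $\diamond=\Ad(s_0)$ to see that the twisted centralizer of $(w_0 s_0,\diamond)$ collapses to the ordinary centralizer $C'\cap\WD_n$; after that, everything reduces to bookkeeping with Mackey's theorem and the branching relations established in Sections~\ref{LR-bc-sect} and \ref{LR-d-sect}.
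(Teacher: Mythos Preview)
Your proof is correct and follows essentially the same route as the paper: both identify the centralizer as $(\W_q\times\W_p)\cap\WD_n$ and then pass through $\W_n$ to reduce to the type-$\BC$ computation, finishing the $\sgn$ case by multiplying by $\sgn$. The only organizational difference is that the paper induces $\chi_\D^\Theta$ up to $\W_n$ (via $H\to K\to\W_n$) and then reads off the answer from \eqref{bcd-reciprocity}, whereas you use Mackey's theorem to realize $\chi_\D^\Theta$ directly as $\Res^{\W_n}_{\WD_n}\bigl(\Ind_{C'}^{\W_n}(\one)\bigr)$ and apply the branching rule; these are dual formulations of the same index-two argument. Your explicit verification of the odd-$q$ centralizer via the twisted conjugation action is a welcome addition that the paper only asserts.
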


\begin{proof}
Let $\Theta := \left[\begin{smallmatrix} n   \\ 
(p,q)  \\ 
\one
\end{smallmatrix}\right]$.
For either parity of $q$,
the centralizer of the unique minimal-length element of $\cK^{\WD_n}_{(p,q)}$ in $\WD_n$
is the intersection $H := (\W_q\times \W_p) \cap \WD_n$.
Let $K := \W_q\times \W_p$.
By Frobenius reciprocity we have 
$
\Ind_H^{K}(\one) = \chi^{((q),\emptyset)}\boxtimes  \chi^{((p),\emptyset)} 
+  \chi^{(\emptyset,(q))}\boxtimes  \chi^{(\emptyset,(p))}$
so 
\[
\Ind_{\WD_n}^{\W_n} (\chi_\D^\Theta)
= \Ind_{\WD_n}^{\W_n}\Ind_H^{\WD_n} (\one)
= \Ind_{K}^{\W_n} \Ind_H^{K}(\one)
= \sum_{j=0}^{\min(p,q)} ( \chi^{((n-j,j),\emptyset)} +  \chi^{((n-j,j),\emptyset)})
\]
by \eqref{LR-bc-eq}. The formula for $\chi_\D^\Theta$ follows by \eqref{bcd-reciprocity}.
The other formula holds since $\chi_\D^{\overline \Theta} = \chi_\D^\Theta \sgn$.
\end{proof}

Our last proposition records a calculation we performed in the 
 algebra system \textsf{GAP} \cite{GAP}:

\begin{proposition}\label{ddd-prop3}
Suppose $\beta \in \{(1,3,\theta), (3,1,\theta)\}$
where $\theta \in \{\circlearrowright,\circlearrowleft\}$.
Then 
\[
\ds\chi_\D^{ \left[\begin{smallmatrix} 4   \\ 
\beta  \\ 
\one  
\end{smallmatrix}\right]}
=\begin{cases} 
 \chi^{\{(4),\emptyset\}} + \chi^{[(2),+]} &\text{if }\theta={\circlearrowright}
 \\
  \chi^{\{(4),\emptyset\}} + \chi^{[(2),-]}&\text{if }\theta={\circlearrowleft}
  \end{cases}
\quand 
\ds\chi_\D^{ \left[\begin{smallmatrix} 4   \\ 
\beta  \\ 
\sgn  
\end{smallmatrix}\right]}
=
\begin{cases} 
\chi^{\{(1,1,1,1),\emptyset\}} + \chi^{[(1,1),+]} &\text{if }\theta={\circlearrowright}
\\
\chi^{\{(1,1,1,1),\emptyset\}} + \chi^{[(1,1),-]} &\text{if }\theta={\circlearrowleft}.
\end{cases}
\]
\end{proposition}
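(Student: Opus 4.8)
The plan is to reduce all the formulas in the statement (over the two shapes $(3,1,\theta)$, $(1,3,\theta)$, the two orientations $\theta\in\{\circlearrowright,\circlearrowleft\}$, and the two linear characters $\gamma\in\{\one,\sgn\}$) to the single identity
\[
\chi_\D^{\left[\begin{smallmatrix} 4\\ (3,1,\circlearrowright)\\ \one\end{smallmatrix}\right]}=\chi^{\{(4),\emptyset\}}+\chi^{[(2),+]},
\]
by means of the operations $\Theta\mapsto\overline\Theta$, $\Theta\mapsto\Theta^\vee$, $\Theta\mapsto\Theta^\diamond$ on model indices introduced earlier in this section. Passing to $\overline\Theta$ multiplies the character by $\sgn$; as $n/2=2$ is even, \eqref{sgn-d-eq} carries $\chi^{\{(4),\emptyset\}}+\chi^{[(2),+]}$ to $\chi^{\{(1^4),\emptyset\}}+\chi^{[(1^2),+]}$, which is exactly the claimed answer for $\gamma=\sgn$, so the sign cases follow from the trivial-character cases. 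Since $n=4$ is even, $\Theta^\vee$ replaces the triple $(p,q,\theta)$ in the second row by $(q,p,\theta)$ while preserving the character (using $\TT^{\Theta^\vee}=(\TT^\Theta)^\vee$ and $\chi^\TT=\chi^{\TT^\vee}$), which reduces the shape $(1,3,\theta)$ to $(3,1,\theta)$. Finally $\Theta^\diamond$ exchanges $(p,q,\circlearrowright)$ with $(p,q,\circlearrowleft)$ and replaces the character by its $\diamond$-conjugate; since $\chi^{\{(4),\emptyset\}}$ is $\diamond$-fixed and $(\chi^{[(2),+]})^\diamond=\chi^{[(2),-]}$, this reduces the orientation $\circlearrowleft$ to $\circlearrowright$ and leaves only the displayed identity.

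For that identity, observe that the perfect classes underlying $\left[\begin{smallmatrix} 4\\ (3,1,\circlearrowright)\\ \one\end{smallmatrix}\right]$ and $\left[\begin{smallmatrix} 4\\ (3,1)\\ \one\end{smallmatrix}\right]$ are obtained from one another by applying the triality automorphism $\circlearrowright$ to their minimal-length elements, so $\chi_\D^{\left[\begin{smallmatrix} 4\\ (3,1,\circlearrowright)\\ \one\end{smallmatrix}\right]}=\chi_\D^{\left[\begin{smallmatrix} 4\\ (3,1)\\ \one\end{smallmatrix}\right]}\circ\circlearrowleft$ by the identity $\chi^{\TT^\alpha}=\chi^\TT\circ\alpha$. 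Proposition~\ref{ddd-prop2} gives $\chi_\D^{\left[\begin{smallmatrix} 4\\ (3,1)\\ \one\end{smallmatrix}\right]}=\chi^{\{(4),\emptyset\}}+\chi^{\{(3,1),\emptyset\}}$, and $\chi^{\{(4),\emptyset\}}=\one$ is fixed by $\circlearrowleft$, so the problem comes down to proving $(\chi^{\{(3,1),\emptyset\}})^{\circlearrowleft}=\chi^{[(2),+]}$. Since Coxeter automorphisms permute $\Irr(\WD_4)$ preserving degrees, $\psi:=(\chi^{\{(3,1),\emptyset\}})^{\circlearrowleft}$ is one of the six degree-$3$ irreducibles $\chi^{\{(3,1),\emptyset\}}$, $\chi^{\{(2,1,1),\emptyset\}}$, $\chi^{[(2),\pm]}$, $\chi^{[(1^2),\pm]}$, and it suffices to pin it down by a couple of character values. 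I would evaluate $\psi$ at $w_\fpf=s_1s_3$ and at $s_1$: because $\circlearrowleft$ sends $s_1\mapsto s_{-1}$ and $s_3\mapsto s_1$, and $\chi^{\{\lambda,\emptyset\}}$ factors through $\WD_4\to S_4$, these are $\chi^{(3,1)}_{S_4}(\mathrm{id})=3$ and $\chi^{(3,1)}_{S_4}((1,2))=1$. On the other side, using $\Res^{\W_4}_{\WD_4}\chi^{((2),(2))}=\chi^{[(2),+]}+\chi^{[(2),-]}$, the normalization \eqref{pmpm-eq}, and the values of $\chi^{((2),(2))}$ at $w_\fpf$ and $s_1$ (which follow from \eqref{LR-bc-eq}, writing $\chi^{((2),(2))}$ as an induced character, together with \eqref{ind-eq}, and from the fact that the $\W_4$-classes of these two elements do not split in $\WD_4$), one gets $\chi^{[(2),+]}(w_\fpf)=3$ and $\chi^{[(2),+]}(s_1)=1$. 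The value $3$ at $w_\fpf$ excludes $\chi^{\{(3,1),\emptyset\}}$, $\chi^{\{(2,1,1),\emptyset\}}$, $\chi^{[(2),-]}$, $\chi^{[(1^2),-]}$, and the value $1$ at $s_1$ excludes $\chi^{[(1^2),+]}=\sgn\cdot\chi^{[(2),+]}$, so $\psi=\chi^{[(2),+]}$ as needed.

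The main obstacle is this last identification of the degree-$3$ constituent. The individual character-value computations are routine, but three points require care, and a slip in any of them reverses a sign (producing $\chi^{[(2),-]}$ in place of $\chi^{[(2),+]}$): the orientation in which the triality automorphism acts on the relevant elements; which $\W_4$-conjugacy classes split on restriction to $\WD_4$; and the convention \eqref{pmpm-eq} that fixes the labeling of $\chi^{[\nu,+]}$ versus $\chi^{[\nu,-]}$. This is precisely the content that the authors verified by machine, and the argument sketched here replaces that verification by a handful of explicit small computations in $\WD_4$ and $\W_4$.
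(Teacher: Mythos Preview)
Your argument is correct and gives a genuine by-hand alternative to the paper's proof, which simply records a \textsf{GAP} computation. The reductions via $\Theta\mapsto\overline\Theta$, $\Theta^\vee$, $\Theta^\diamond$ are all valid, and the key identification $(\chi^{\{(3,1),\emptyset\}})\circ\circlearrowleft=\chi^{[(2),+]}$ via the two character values checks out: using $\Res^{\W_4}_{S_4}\chi^{(\lambda,\mu)}=\chi^\lambda\bullet_\A\chi^\mu$ (which follows from \eqref{LR-ba-eq} by Frobenius reciprocity) one finds $\chi^{((2),(2))}(w_\fpf)=\chi^{((2),(2))}(s_1)=2$, whence $\chi^{[(2),+]}(w_\fpf)=3$ by \eqref{pmpm-eq} and $\chi^{[(2),+]}(s_1)=1$ since the class of $s_1$ does not split. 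Compared with the paper, your approach trades a black-box verification for an explicit argument that also explains \emph{why} the triality-twisted classes produce exactly the degenerate constituents $\chi^{[(2),\pm]}$.

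One small correction: your parenthetical asserts that the $\W_4$-classes of both $w_\fpf$ and $s_1$ fail to split in $\WD_4$, but in fact the class of $w_\fpf$ \emph{does} split---the centralizers of $w_\fpf$ in $\W_4$ and $\WD_4$ coincide, as the paper itself uses in the proof of Proposition~\ref{ddd-prop1}. This does not damage your argument, since at $w_\fpf$ you rely on \eqref{pmpm-eq} rather than non-splitting; non-splitting is only needed at $s_1$, and there it holds because $(1,-1)(3,-3)\in\WD_4$ conjugates $s_1$ to $s_{-1}=s_1^\diamond$, forcing $\chi^{[(2),+]}(s_1)=\chi^{[(2),-]}(s_1)$.
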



\subsection{Model projections in type D}

We define a map $\pi_\D: \MTI(\WD_n) \to \MTI(S_n)\sqcup\{0\}$.
Let $\Theta = \left[\begin{smallmatrix} \alpha_0 & \alpha_1 \\ 
\beta_0& \beta_1\\ 
\gamma_0 & \gamma_1
\end{smallmatrix}\right] \in \MTI(\WD_n)$.
First  set 
\[ \pi_\D(\Theta) :=  \left[\begin{smallmatrix} |\alpha_1|  \\ 
 \beta_1    \\ 
 \gamma_1  
\end{smallmatrix}\right]\text{ if }\alpha_0=0
\quand
\pi_\D(\Theta) := 0
\text{ if $\gamma_0 \notin \{\one,\sgn\}$ (only possible if $\alpha_0=2$)}.
\]
 Assume $\alpha_0\neq 0$ 
 and $\gamma_0 \in \{ \one,\sgn\}$.
If $\alpha_1 \neq 0$ then we define
  \[
  \pi_\D(\Theta)  := 
\begin{cases}
  \left[\begin{smallmatrix} \alpha_0 & \alpha_1  \\ 
\fpf & \beta_1 \\ 
\gamma_0  & \gamma_1
\end{smallmatrix}\right]  & \text{if }\beta_0 \in \{ \fpf, \fpf^\diamond\} 
\\[-8pt]\\
  \left[\begin{smallmatrix} \alpha_0  &\alpha_1 \\ 
\id &\beta_1\\ 
\gamma_0 &\gamma_1
\end{smallmatrix}\right] &\text{if }\beta_0 \in \{ \id,\id^+,
(1,3,\circlearrowright), (1,3,\circlearrowleft),
(3,1,\circlearrowright), (3,1,\circlearrowleft) \}
\\[-8pt]\\
 \left[\begin{smallmatrix}p & q& \alpha_1  \\ 
\id &\id & \beta_1   \\ 
\gamma_0 & \gamma_0 &  \gamma_1 
\end{smallmatrix}\right]
&\text{if $\beta_0 = (p,q)$ for $p,q>0$ with $p+q=\alpha_0$}.
\end{cases}
\]
When $\alpha_1=0$,
we form $\pi_\D(\Theta)$ by
applying the same formula, and then deleting the last column
if the result is nonzero.

Define $\cR^\D := \bigoplus_{n \in \NN} \cR^\D_n$
where $\cR^\D_n$ is the $\CC$-vector space of class functions $\WD_n\to\CC$.
We use the same symbol $\pi_\D$
to denote the linear map $\cR^\D \to \cR^\A$ 
with 
\[
\pi_\D(\chi^{[\lambda,\pm]}) :=0
\quand
\pi_\D(\chi^{\{\lambda,\mu\}}) := \begin{cases} \chi^\lambda &\text{if }\mu=\emptyset \\
\chi^\mu &\text{if }\lambda=\emptyset \\
0&\text{otherwise}
\end{cases}
\]
for all partitions $\lambda \neq \mu$. Finally, set $\chi_\A^0 := 0 \in \cR^\A$.

\begin{lemma}\label{da-pi-lem}
If $\Theta  \in \MTI(\WD_n)$ then $\pi_\D(\chi_\D^\Theta) = \chi_\A^{\pi_\D(\Theta)}$.
\end{lemma}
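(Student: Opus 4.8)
The plan is to reduce the identity $\pi_\D(\chi_\D^\Theta) = \chi_\A^{\pi_\D(\Theta)}$ to a finite list of ``atomic'' cases via the factorization \eqref{d-theta-bullet-eq}, exactly mirroring the proof of Lemma~\ref{bca-pi-lem}. First I would establish the multiplicativity property: for all $\chi \in \cR^\D$ and $\psi \in \cR^\A_m$,
\[
\pi_\D\bigl(\chi \bullet_\D \Ind_{S_m}^{\WD_m}(\psi)\bigr) = \pi_\D(\chi)\bullet_\A \psi.
\]
This follows by pairing \eqref{LR-da-eq1} and \eqref{LR-da-eq2} with \eqref{LR-ba-eq} and the Littlewood--Richardson expansion in \eqref{LR-d-eq}, using parts (a)--(f) of Taylor's formulas. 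The key numerical inputs are that $c^\emptyset_{\lambda\mu}$ vanishes unless $\lambda=\mu=\emptyset$, that $c^\nu_{\lambda\emptyset}=\delta_{\lambda\nu}$, and, crucially, that the degenerate characters $\chi^{[\nu,\pm]}$ are killed by $\pi_\D$: one checks from part (b) of Taylor's formula that a product $\chi^{[\lambda,\epsilon]}\bullet_\D \chi^\Gamma$ never produces a nondegenerate constituent $\chi^{\{\nu,\emptyset\}}$ with $\nu\neq\emptyset$ (since such a constituent would require $c^\emptyset_{\lambda\mu}\neq 0$ with $\lambda\vdash n/2 > 0$), and likewise by part (c) that $d^{[\nu,\pm]}_{[\lambda,\epsilon][\mu,\epsilon']}$ contributes only degenerate terms, all of which vanish under $\pi_\D$. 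Care is needed for the case $\alpha_0=0$, $\alpha_1=-n$: here $\chi_\D^\Theta = \Ind_{S_{-n}}^{\WD_n}(\chi_\A^\Psi)$ and one uses the first equality in \eqref{LR-da-eq1} (resp.\ the symmetry $\Ind_{S_{-n}}^{\WD_n}=\Ind_{S_{n}}^{\WD_n}$ on nondegenerate parts in \eqref{LR-da-eq2}) to see that $\pi_\D$ of this agrees with $\chi_\A^{\pi_\D(\Theta)} = \chi_\A^\Psi$.

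Granting the multiplicativity property, together with \eqref{theta-bullet-eq} and \eqref{d-theta-bullet-eq}, it suffices to verify the identity when $\Theta$ has a single column, i.e.\ $\Theta = \left[\begin{smallmatrix} \alpha_0 \\ \beta_0 \\ \gamma_0\end{smallmatrix}\right]$ (interpreted via the convention in \eqref{d-theta-bullet-eq} as having $\alpha_1=0$), together with the case $\alpha_0\in\{0,1\}$ where $\pi_\D(\chi_\D^\Theta)=\chi_\A^{\pi_\D(\Theta)}$ reduces to the type A identity in Proposition~\ref{one-col-a-prop}. For the single-column case I would run through the possible values of $\beta_0$:
\begin{itemize}
\item $\beta_0 \in \{\id,\id^+\}$: then $\chi_\D^\Theta = \gamma_0 \in \{\one,\sgn\} = \{\chi^{\{(n),\emptyset\}},\chi^{\{(1^n),\emptyset\}}\}$ and $\pi_\D(\Theta) = \left[\begin{smallmatrix} n \\ \id \\ \gamma_0\end{smallmatrix}\right]$ with $\chi_\A^{\pi_\D(\Theta)} = \gamma_0 \in \{\chi^{(n)},\chi^{(1^n)}\}$; both sides match since $\pi_\D(\chi^{\{(n),\emptyset\}}) = \chi^{(n)}$ and $\pi_\D(\chi^{\{(1^n),\emptyset\}}) = \chi^{(1^n)}$.
\item $\beta_0 \in \{\fpf,\fpf^\diamond\}$ ($\alpha_0$ even): apply $\pi_\D$ to the formula in Proposition~\ref{ddd-prop1}. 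The degenerate terms $\chi^{[\nu,\pm]}$ vanish, and $\pi_\D$ sends $\sum_{\{\lambda,\mu\}\in\EvenDRows(n)}\chi^{\{\lambda,\mu\}}$ to $\sum_{\lambda\in\EvenRows(n)}\chi^\lambda$ (the unordered bipartitions with $\mu=\emptyset$ contributing, the others mapping to $0$), which by \eqref{irs-eq} is $\chi_\A^{\left[\begin{smallmatrix} n \\ \fpf \\ \one\end{smallmatrix}\right]}$, matching $\chi_\A^{\pi_\D(\Theta)}$; the $\sgn$ case is analogous via $\EvenDCols$.
\item $\beta_0 = (p,q)$: apply $\pi_\D$ to Proposition~\ref{ddd-prop2}, getting $\sum_{j}\chi^{(n-j,j)}$ (resp.\ $\sum_j \chi^{(2^j,1^{n-j})}$), which equals $\chi_\A^{\left[\begin{smallmatrix} p & q \\ \id & \id \\ \gamma_0 & \gamma_0\end{smallmatrix}\right]}$ by Proposition~\ref{one-col-a-prop}(e)/(f), matching $\chi_\A^{\pi_\D(\Theta)}$.
\item $\beta_0 = (1,3,\theta)$ or $(3,1,\theta)$ with $\alpha_0=4$: apply $\pi_\D$ to Proposition~\ref{ddd-prop3}. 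The degenerate term vanishes, leaving $\chi^{(4)}$ (resp.\ $\chi^{(1^4)}$), which matches $\chi_\A^{\pi_\D(\Theta)} = \chi_\A^{\left[\begin{smallmatrix} 4 \\ \id \\ \gamma_0\end{smallmatrix}\right]}$.
\end{itemize}

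The main obstacle I anticipate is the bookkeeping in the multiplicativity step: unlike type B, where $\cR^\BC$ has a clean tensor-product structure and $\piL$ is essentially a projection onto a direct summand, in type D one must handle the degenerate characters $\chi^{[\nu,\pm]}$ and the slightly more intricate Taylor formulas (a)--(f), and verify that no degenerate constituent ever ``leaks'' into the image of $\pi_\D$. Concretely, one must check that for any $\Gamma$ the product $\chi^{[\lambda,\epsilon]}\bullet_\D\chi^\Gamma$ has no constituent $\chi^{\{\nu,\emptyset\}}$ with $\nu\neq\emptyset$ and no constituent $\chi^{\{\emptyset,\nu\}}$ with $\nu\neq\emptyset$, so that $\pi_\D$ annihilates such products; this is where formula (b) and the vanishing $c^\emptyset_{\lambda\mu}=0$ (for $\lambda\vdash n/2>0$) do the work. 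Once this is in hand, the rest is the routine case check above, and the proof concludes exactly as in Lemma~\ref{bca-pi-lem} by invoking \eqref{theta-bullet-eq}, \eqref{d-theta-bullet-eq}, and the single-column verifications.
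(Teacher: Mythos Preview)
Your approach is correct and is exactly the strategy used in the paper: establish the multiplicativity identity $\pi_\D(\chi \bullet_\D \Ind_{S_m}^{\WD_m}(\psi)) = \pi_\D(\chi)\bullet_\A \psi$ from \eqref{LR-d-eq}--\eqref{LR-da-eq2}, then reduce via \eqref{theta-bullet-eq} and \eqref{d-theta-bullet-eq} to the single-column indices and verify those by comparing Proposition~\ref{one-col-a-prop} with Propositions~\ref{ddd-prop1}, \ref{ddd-prop2}, and \ref{ddd-prop3}. Your write-up is actually more detailed than the paper's (which dispatches the multiplicativity step in one sentence and the single-column cases in another); your explicit check that degenerate constituents never leak through $\pi_\D$ and your case-by-case verification are exactly what lies behind the paper's terse references.
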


\begin{proof}
The identities \eqref{LR-d-eq}--\eqref{LR-da-eq2} 
imply
 that $\pi_\D(\chi \bullet_\D \Ind_{S_n}^{\W_n}(\psi)) = \pi_\D(\chi) \bullet_\A \psi$
for $\chi \in \cR^\D$ and $\psi \in \cR^\A_n$.
Given this plus \eqref{theta-bullet-eq} and \eqref{d-theta-bullet-eq},
 to show that $\pi_\D(\chi_\D^\Theta) = \chi_\A^{\pi_\D(\Theta)}$
for all $\Theta  \in \MTI(\W_n)$
it suffices to prove this identity  for model indices of the form $\Theta = \left[\begin{smallmatrix} \alpha_0  \\ 
\beta_0 \\ 
\gamma_0 
\end{smallmatrix}\right] $.
This follows from the definition of $\pi_\D$ on comparing Proposition~\ref{one-col-a-prop}
with Propositions~\ref{ddd-prop1}, \ref{ddd-prop2}, and \ref{ddd-prop3}.
\end{proof}

\subsection{Perfect models in type D}

Fix a model index $\Theta =\left[\begin{smallmatrix} 
\alpha_0 & \alpha_1 \\ 
\beta_0 & \beta_1   \\ 
\gamma_0 & \gamma_1
\end{smallmatrix}\right] \in \MTI(\WD_n)$.

\begin{lemma}\label{d-lem}
The character $\chi_\D^\Theta$ is not multiplicity-free if any of the following conditions hold:
\ben
\item[(a)] $\alpha_1 \in \{4,6,8,\dots\}$ and $\beta_1\in \{\fpf,\fpf^+\}$.
\item[(b)] $\alpha_0 \in \{4,6,\dots\}$, $\beta_0\in \{\fpf,\fpf^\diamond\}$, $\alpha_1\geq 2$, and $\gamma_0=\gamma_1 \in \{\one,\sgn\}$.
\item[(c)] $\beta_0 = (p,q)$ for some $p,q>0$ with $p+q=\alpha_0\geq 3$ and $\alpha_1\geq 1$. 
\een
\end{lemma}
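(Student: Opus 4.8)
The plan is to establish (a), (b), and (c) one at a time, following the template of the type B argument in Lemma~\ref{b-lem}: part (a) is proved by a direct reduction to an induction product, while parts (b) and (c) are imported from type A via the projection $\pi_\D$ together with Lemmas~\ref{da-pi-lem} and \ref{a-lem}. Two general observations will be used repeatedly. First, the product $\bullet_\D$ never decreases multiplicities: if a character $g \in \cR^\D$ has an irreducible constituent of multiplicity at least two, then so does $f \bullet_\D g$ for every nonzero character $f$, since writing $g = 2\psi + g'$ and inducing shows $f \bullet_\D g \geq 2\,\Ind(f \boxtimes \psi)$, and $\Ind(f \boxtimes \psi)$ is a nonzero character. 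Second, since $\pi_\D$ is linear, kills every $\chi^{[\nu,\pm]}$ and every $\chi^{\{\lambda,\mu\}}$ with $\lambda,\mu$ both nonempty, and restricts to a bijection from the subspace of $\cR^\D_n$ spanned by $\{\chi^{\{\lambda,\emptyset\}} : \lambda \vdash n\}$ onto $\cR^\A_n$, a character $\chi \in \cR^\D_n$ whose image $\pi_\D(\chi)$ is not multiplicity-free cannot itself be multiplicity-free.

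For part (a), formula \eqref{d-theta-bullet-eq} and the first observation reduce the claim to showing that $\Ind_{S_{|\alpha_1|}}^{\WD_{|\alpha_1|}}(\chi_\A^{\Psi})$ is not multiplicity-free, where $\Psi = \left[\begin{smallmatrix}\alpha_1\\\beta_1\\\gamma_1\end{smallmatrix}\right]$ and $m := |\alpha_1| \in \{4,6,8,\dots\}$; the $\diamond$-twisted case $\alpha_1 = -n$ differs only by the ring automorphism $(\,\cdot\,)^\diamond$, which preserves multiplicity-freeness. Passing from $\Psi$ to $\overline\Psi$ and using $\Ind(f\,\sgn) = \Ind(f)\,\sgn$, we may assume $\gamma_1 = \one$, so $\chi_\A^\Psi = \sum_{\nu \in \EvenRows(m)}\chi^\nu$ by \eqref{irs-eq}. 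When $m \geq 6$ the partitions $(m)$ and $(m-2,2)$ are distinct and both lie in $\EvenRows(m)$, and by \eqref{LR-da-eq1}--\eqref{LR-da-eq2} together with the Pieri rules the nondegenerate character $\chi^{\{(m-2),(2)\}}$ of $\WD_m$ occurs in both $\Ind_{S_m}^{\WD_m}(\chi^{(m)})$ and $\Ind_{S_m}^{\WD_m}(\chi^{(m-2,2)})$ with coefficient $c^{(m)}_{(m-2),(2)} = c^{(m-2,2)}_{(m-2),(2)} = 1$; hence it appears in $\Ind_{S_m}^{\WD_m}(\chi_\A^\Psi)$ with multiplicity at least two. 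The remaining case $m = 4$, where $\{(2),(2)\}$ is degenerate, is checked separately: using Proposition~\ref{pieri-d-prop}, the vanishing $\Ind_{S_{-4}}^{\WD_4}(\one)(w_\fpf) = \Ind_{S_{-4}}^{\WD_4}(\chi^{(2,2)})(w_\fpf) = 0$, and \eqref{pmpm-eq} to fix the relevant sign, one finds that $\Ind_{S_4}^{\WD_4}(\chi^{(4)} + \chi^{(2,2)})$ contains $\chi^{[(2),+]}$ with multiplicity two; alternatively this is a quick \textsf{GAP} computation.

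For parts (b) and (c) we pass to type A. In case (b), if $\beta_1 \in \{\fpf,\fpf^+\}$ then $|\alpha_1| \in \{4,6,\dots\}$ and part (a) already applies; otherwise $\beta_1 \in \{\id,\id^+\}$, and since $\beta_0 \in \{\fpf,\fpf^\diamond\}$ and $\gamma_0 \in \{\one,\sgn\}$ with $\alpha_0 \in \{4,6,\dots\}$ and $\alpha_1 \geq 2$, the definition of $\pi_\D$ gives $\pi_\D(\Theta) = \left[\begin{smallmatrix}\alpha_0 & \alpha_1\\\fpf & \beta_1\\\gamma_0 & \gamma_1\end{smallmatrix}\right]$. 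By Lemma~\ref{a-lem}(a) the character of this type A index is not multiplicity-free, so by Lemma~\ref{da-pi-lem} neither is $\pi_\D(\chi_\D^\Theta) = \chi_\A^{\pi_\D(\Theta)}$, and the second observation forces $\chi_\D^\Theta$ to not be multiplicity-free. In case (c), $\beta_0 = (p,q)$ with $p,q > 0$ and $p + q = \alpha_0 \geq 3$ forces $\gamma_0 \in \{\one,\sgn\}$, and since $\alpha_1 \geq 1$ the definition of $\pi_\D$ yields the three-column index $\pi_\D(\Theta) = \left[\begin{smallmatrix}p & q & \alpha_1\\\id & \id & \beta_1\\\gamma_0 & \gamma_0 & \gamma_1\end{smallmatrix}\right]$, whose character is not multiplicity-free by the opening assertion of Lemma~\ref{a-lem}; Lemma~\ref{da-pi-lem} and the second observation again finish the proof.

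The step I expect to be the main obstacle is the $m = 4$ subcase of part (a): the clean argument for $m \geq 6$ fails precisely because $\chi^{((2),(2))}$ restricts reducibly to $\WD_4$, so one must work with the degenerate characters $\chi^{[\nu,\pm]}$ and exploit the $\diamond$-asymmetry of induction from $S_{+m}$ versus $S_{-m}$ — the same feature that makes type D more delicate than type B throughout. The remaining work is routine: unwinding the definition of $\pi_\D$ in cases (b) and (c) and matching the resulting indices against the type A cases already disposed of in Lemma~\ref{a-lem}.
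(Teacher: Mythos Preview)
Your proposal is correct and follows essentially the same approach as the paper's proof: part (a) reduces via \eqref{d-theta-bullet-eq} to showing $\Ind_{S_m}^{\WD_m}(\chi_\A^\Psi)$ is not multiplicity-free (with $m=4$ handled separately), and parts (b) and (c) are pushed down to type A through $\pi_\D$ and Lemmas~\ref{da-pi-lem} and \ref{a-lem}. The only cosmetic difference is that for $m>4$ in part (a) the paper inflates to $\W_m$ and invokes \eqref{bcd-reciprocity} to detect the repeated constituent $\chi^{\{(m-2),(2)\}}$, whereas you read it off directly from \eqref{LR-da-eq1}--\eqref{LR-da-eq2}; both amount to the same Littlewood--Richardson computation.
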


\begin{proof}
Suppose (a) holds and let $\Psi := \left[\begin{smallmatrix} \alpha_1  \\ 
\beta_1   \\ 
\gamma_1 
\end{smallmatrix}\right]$ and $m := \alpha_1$. 
 As in the proof of Lemma~\ref{b-lem},
it is enough by \eqref{d-theta-bullet-eq} to show that  the character $\Ind_{S_{m}}^{\WD_{m}}(\chi_\A^{\Psi})$ is not multiplicity-free
when $\gamma_1=\one$.
When $m=4$ this can be checked by hand or using a computer algebra system.
The proof of Lemma~\ref{b-lem} shows that $\Ind_{\WD_m}^{\W_m}(\Ind_{S_{m}}^{\WD_{m}}(\chi_\A^{\Psi}))$
contains $\chi^{((m-2),(2))}$ with multiplicity at least two,
and when $m>4$ this can only occur in view of \eqref{bcd-reciprocity} if 
$\chi^{\{(m-2),(2)\}}$ appears with multiplicity at least two in $\Ind_{S_{m}}^{\WD_{m}}(\chi_\A^{\Psi})$.

If (b) or (c) holds then  $\pi_\D(\chi_\D^\Theta)=\chi_\A^{\pi_\D(\Theta)}\neq 0$ 
is not multiplicity-free by Lemma~\ref{a-lem}, so $\chi_\D^\Theta$ must also not be multiplicity-free.
\end{proof}

%
%
%

Let $\OddDRows(n,q)$ be the set of unordered bipartitions $\{\lambda,\mu\}\vdash n$ such that $\lambda \cup \mu$ has exactly $q$ odd parts.
Define $\OddDCols(n,q) = \{ \{\lambda^\top,\mu^\top\} : \{\lambda,\mu\}\in\OddDRows(n,q)\}$.

\begin{proposition} \label{d-index-prop}
Suppose $\Theta \in \MTI(\WD_n)$ 
and $\chi^\Theta_\D$ is
multiplicity-free.
Then
 $\Theta$ has one of the following forms:
\ben


\item[(a)] $\left[\begin{smallmatrix} 
k & n-k \\ 
\id/\id^+ & \id/\id^+   \\ 
\gamma_0 & \gamma_0
\end{smallmatrix}\right]$ 
for some $k \in \{3,4,\dots,n\}$ and $\gamma_0,\gamma_1\in\{\one,\sgn\}$.

\item[(b)]  $\left[\begin{smallmatrix} 
2 & n-2 \\ 
\id/\id^+/\fpf/\fpf^\diamond & \id/\id^+   \\ 
\gamma_0 & \gamma_1
\end{smallmatrix}\right]$
for some $\gamma_0 \in \{\one,\sgn, \one_{+-},\one_{-+}\}$, and $\gamma_1\in\{\one,\sgn\}$.

\item [(c)] $\left[\begin{smallmatrix} 
0 & n \\ 
\id  & \id/\id^+   \\ 
\one & \gamma_1
\end{smallmatrix}\right]$ or $\left[\begin{smallmatrix} 
0 & -n \\ 
\id  & \id/\id^+   \\ 
\one & \gamma_1
\end{smallmatrix}\right]$ for some  $\gamma_1\in\{\one,\sgn\}$.

\item[(d)] $\left[\begin{smallmatrix} 
2k & n-2k \\ 
\fpf/\fpf^\diamond & \id/\id^+   \\ 
\one & \sgn
\end{smallmatrix}\right]$ for some $0\leq k  \leq \lfloor n/2\rfloor$, in which case 
\[
\chi_\D^\Theta =
\sum_{\{\lambda,\mu \} \in \OddDRows(n,n-2k)} \chi^{\{\lambda,\mu\}}
+\sum_{\nu \in \OddRows(\frac{n}{2},\frac{n}{2}-k)} \chi^{[\nu,\epsilon_\nu]}
\]
for some choice of signs $\epsilon_\nu \in \{\pm\}$, where the second sum is zero if $n$ is odd.

\item[(e)] $\left[\begin{smallmatrix} 
2k & n-2k \\ 
\fpf/\fpf^\diamond & \id/\id^+   \\ 
\sgn & \one
\end{smallmatrix}\right]$ for some  $0\leq k \leq \lfloor n/2\rfloor $, in which case 
\[
\chi_\D^\Theta =\sum_{\{\lambda,\mu \} \in \OddDCols(n,n-2k)} \chi^{\{\lambda,\mu\}}
+\sum_{\nu \in \OddCols(\frac{n}{2},\frac{n}{2}-k)} \chi^{[\nu,\epsilon_\nu]}
\]
for some choice of signs $\epsilon_\nu \in \{\pm\}$, where the second sum is zero if $n$ is odd.

\item[(f)] $\left[\begin{smallmatrix} n & 0 \\ (p,q)  & \id  \\ \gamma_0 & \one  \end{smallmatrix}\right] $ for some $p,q>0$ such that $2<p+q=n$ and $\gamma_0 \in \{\one,\sgn\}$.

\item[(g)] $\left[\begin{smallmatrix} 4 & n-4 \\ \beta  & \id/\id^+ \\ \gamma_0 & \gamma_1  \end{smallmatrix}\right] $ 
for some $\beta \in \{ 
(1,3,\circlearrowright), (1,3,\circlearrowleft),
(3,1,\circlearrowright), (3,1,\circlearrowleft) \}$
and $\gamma_0,\gamma_1 \in \{\one,\sgn\}$.

\een

\end{proposition}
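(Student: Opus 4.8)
The plan is to prove Proposition~\ref{d-index-prop} exactly as its type-$\A$ and type-$\BC$ counterparts, Propositions~\ref{one-col-a-prop} and \ref{bc-index-prop}. We have already recorded (via Theorems~\ref{tech1-thm} and \ref{tech2-thm}) that every multiplicity-free model triple for $\WD_n$ equals $\TT^\Theta$ for some $\Theta\in\MTI(\WD_n)$, so it suffices (i) to show that any $\Theta\in\MTI(\WD_n)$ avoiding all three exclusions of Lemma~\ref{d-lem} has one of the forms (a)--(g), and (ii) to establish the character formulas asserted in (d), (e) (the ones implicit in (f), (g) being immediate from Propositions~\ref{ddd-prop2} and \ref{ddd-prop3} together with the factorization \eqref{d-theta-bullet-eq}).

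For (i) I would run the bookkeeping directly on the parametrization of $\MTI(\WD_n)$. Write $\Theta=\left[\begin{smallmatrix}\alpha_0&\alpha_1\\ \beta_0&\beta_1\\ \gamma_0&\gamma_1\end{smallmatrix}\right]$ and split on the cases $\alpha_0=0$ (so $|\alpha_1|=n$), $\alpha_1=0$ (so $\alpha_0=n$), and $0<|\alpha_1|<n$ (so $\alpha_0\geq 2$). Since $\MTI(\WD_n)$ permits $\beta_1\in\{\fpf,\fpf^+\}$ only when $|\alpha_1|\in\{4,6,\dots\}$, Lemma~\ref{d-lem}(a) forces $\beta_1\in\{\id,\id^+\}$ in every surviving index. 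Lemma~\ref{d-lem}(c) then eliminates all indices with $\beta_0$ a pair $(p,q)$ unless $\alpha_1=0$, accounting for (f); the triple values $\beta_0\in\{(1,3,\circlearrowright),(1,3,\circlearrowleft),(3,1,\circlearrowright),(3,1,\circlearrowleft)\}$ are not obstructed and give (g). For $\beta_0\in\{\fpf,\fpf^\diamond\}$ (so $\alpha_0$ even), Lemma~\ref{d-lem}(b) forces $\{\gamma_0,\gamma_1\}=\{\one,\sgn\}$ as soon as $\alpha_0\geq 4$ and $\alpha_1\geq 2$, giving (d) or (e) with $0<k=\alpha_0/2<n/2$; the unobstructed boundary values $\alpha_0\in\{0,2\}$ fall into (b) (when $\alpha_0=2$) or into (d)/(e) with $k\in\{0,n/2\}$. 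The remaining indices have $\beta_0\in\{\id,\id^+\}$ and are unobstructed: those with $\alpha_0\geq 3$ give (a), those with $\alpha_0=2$ give (b), and those with $\alpha_0\in\{0,1\}$ give (c). This step is routine but is the most laborious part.

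For (ii), using $\chi_\D^{\overline\Theta}=\chi_\D^\Theta\sgn$, $\chi_\D^{\Theta^\diamond}=(\chi_\D^\Theta)^\diamond$, \eqref{sgn-d-eq}, and $\id^+\equiv\id$, the formulas in (d) and (e) reduce to the single case $\Theta=\left[\begin{smallmatrix}2k&n-2k\\ \fpf&\id\\ \one&\sgn\end{smallmatrix}\right]$, where \eqref{d-theta-bullet-eq} gives $\chi_\D^\Theta=\chi_\D^{\left[\begin{smallmatrix}2k\\ \fpf\\ \one\end{smallmatrix}\right]}\bullet_\D\Ind_{S_{n-2k}}^{\WD_{n-2k}}(\sgn)$; in principle one expands this with Propositions~\ref{ddd-prop1} and \ref{pieri-d-prop} and Taylor's type-$\D$ rule \eqref{LR-d-eq}. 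To avoid the unknown multiplicities $c^\nu_{[\lambda,\pm]}$, I would induce up to $\W_n$: the $\WD_n$- and $\W_n$-centralizers of $s_1s_3\cdots s_{2k-1}$ coincide and the $S_{n-2k}$-factor has centralizer $S_{n-2k}$ in either ambient group, so $\Ind_{\WD_n}^{\W_n}(\chi_\D^\Theta)=\chi_\BC^\Theta=\sum_{(\lambda,\mu)\in\OddBiRows(n,n-2k)}\chi^{(\lambda,\mu)}$ by Proposition~\ref{bc-index-prop}(b). Writing $\chi_\D^\Theta=\sum a_{\{\lambda,\mu\}}\chi^{\{\lambda,\mu\}}+\sum(b^+_\nu\chi^{[\nu,+]}+b^-_\nu\chi^{[\nu,-]})$ and applying \eqref{bcd-reciprocity} yields $\sum a_{\{\lambda,\mu\}}(\chi^{(\lambda,\mu)}+\chi^{(\mu,\lambda)})+\sum(b^+_\nu+b^-_\nu)\chi^{(\nu,\nu)}=\sum_{\OddBiRows(n,n-2k)}\chi^{(\lambda,\mu)}$; comparing coefficients of ordered bipartitions forces $a_{\{\lambda,\mu\}}=1$ exactly when $\{\lambda,\mu\}\in\OddDRows(n,n-2k)$ and $b^+_\nu+b^-_\nu=1$ exactly when $\nu\in\OddRows(\tfrac n2,\tfrac n2-k)$, so $b^{\epsilon_\nu}_\nu=1$ for one sign and $0$ for the other. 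This gives the formula in (d) and simultaneously shows $\chi_\D^\Theta$ is multiplicity-free; (e) follows by multiplying by $\sgn$ and replacing $\OddBiRows$, $\OddRows$ by $\OddBiCols$, $\OddCols$. The main obstacle is thus the enumeration bookkeeping in (i) and keeping the degenerate constituents $\chi^{[\nu,\pm]}$ straight — but since the statement only claims the \emph{existence} of suitable signs $\epsilon_\nu$, the induction-to-$\W_n$ argument sidesteps the genuinely hard type-$\D$ Littlewood--Richardson combinatorics.
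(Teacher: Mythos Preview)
Your proposal is correct. For part (i) you are doing exactly what the paper does, only more explicitly: the paper's proof simply says ``the given cases account for all model indices in $\MTI(\WD_n)$ not excluded by Lemma~\ref{d-lem},'' and your case split is the unpacking of that sentence. (One small slip: $\alpha_0=1$ never occurs, since $\alpha_1\in\{-n,n\}\sqcup\{0,1,\dots,n-2\}$ forces $\alpha_0\in\{0\}\cup\{2,\dots,n\}$; this does not affect your argument.)

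For part (ii) you diverge from the paper. The paper derives the formulas in (d) and (e) by expanding the product in \eqref{d-theta-bullet-eq} via Propositions~\ref{pieri-d-prop} and \ref{ddd-prop1} and Taylor's type-$\D$ Littlewood--Richardson rule \eqref{LR-d-eq}. You instead induce $\chi_\D^\Theta$ up to $\W_n$, invoke the already-established type-$\BC$ formula from Proposition~\ref{bc-index-prop}(b), and read off the $\WD_n$-constituents via \eqref{bcd-reciprocity}. Your route is valid (the key point that the $\WD_{2k}$- and $\W_{2k}$-centralizers of $s_1s_3\cdots s_{2k-1}$ coincide is exactly what the paper uses in the proof of Proposition~\ref{ddd-prop1}), and it is cleaner for this particular statement since it bypasses the product computation entirely and makes the multiplicity-freeness of (d) and (e) immediate. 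The paper's route, on the other hand, stays inside type $\D$ and in principle can pin down the signs $\epsilon_\nu$, as noted in the remark following the proposition---but since the proposition only asserts their existence, your shortcut loses nothing.
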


The signs in parts (d) and (e) can be determined using \eqref{LR-d-eq} and Propositions~\ref{pieri-d-prop} and \ref{ddd-prop1},
but we will not need this for our applications.

\begin{proof}
The given cases account for all model indices in $\MTI(\WD_n)$ not excluded by Lemma~\ref{d-lem}.
The formulas in parts (d) and (e) follow by combining \eqref{d-theta-bullet-eq} with Propositions~\ref{pieri-d-prop} and \ref{ddd-prop1}.
 \end{proof}

 \begin{theorem} \label{d-thm}
 Assume $n\geq 4$. If $n$ is even  then $\WD_n$ has no perfect models.
If $n$ is odd then
\[ 
\ba
\cP^{\D}_n &:=\left\{ \TT^\Theta : \Theta = \left[\begin{smallmatrix} 
2k & n-2k \\ 
\fpf & \id   \\ 
\one & \sgn
\end{smallmatrix}\right] \text{ for }0\leq k \leq \lfloor\tfrac{n}{2}\rfloor \right\}
\ea
\]
is a perfect model  for $\WD_n$, and 
each perfect model for $\WD_n$ is strongly equivalent  $\cP^{\D}_n$ or $\overline{\cP^{\D}_n}$.
\end{theorem}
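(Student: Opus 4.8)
The plan is to mimic the proof of Theorem~\ref{b-thm}, using the single projection $\pi_\D\colon\MTI(\WD_n)\to\MTI(S_n)\sqcup\{0\}$ of Lemma~\ref{da-pi-lem} in place of the two projections $\piL,\piR$ used in type $\BC$. Throughout set $\Theta_k:=\left[\begin{smallmatrix}2k&n-2k\\\fpf&\id\\\one&\sgn\end{smallmatrix}\right]\in\MTI(\WD_n)$ for $0\le k\le\lfloor n/2\rfloor$, so that $\cP^\D_n=\{\TT^{\Theta_k}\}$ when $n$ is odd, and write $\Theta_k^\diamond$ for the image of $\Theta_k$ under $\Theta\mapsto\Theta^\diamond$, which for $k\geq 1$ is $\left[\begin{smallmatrix}2k&n-2k\\\fpf^\diamond&\id\\\one&\sgn\end{smallmatrix}\right]$. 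To dispatch the positive assertion: when $n$ is odd $\WD_n$ has no degenerate irreducible characters, so the second sum in Proposition~\ref{d-index-prop}(d) is empty and $\chi_\D^{\Theta_k}=\sum_{\{\lambda,\mu\}\in\OddDRows(n,n-2k)}\chi^{\{\lambda,\mu\}}$; since $n-2k$ runs over all odd integers from $1$ to $n$ as $k$ runs over $0,\dots,\lfloor n/2\rfloor$, and every unordered bipartition $\{\lambda,\mu\}\vdash n$ has an odd number of odd parts, the sets $\OddDRows(n,n-2k)$ partition $\Irr(\WD_n)$ and $\sum_k\chi_\D^{\Theta_k}=\sum_{\psi\in\Irr(\WD_n)}\psi$. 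So $\cP^\D_n$ is a perfect model.

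For the classification (both parities at once), let $\cM\subseteq\MTI(\WD_n)$ be such that $\{\TT^\Theta:\Theta\in\cM\}$ is a perfect model; by Theorems~\ref{tech1-thm} and~\ref{tech2-thm} every perfect model for $\WD_n$ has this form, and each $\chi_\D^\Theta$ with $\Theta\in\cM$ is multiplicity-free. Small $n$ are handled by a short \textsf{GAP} computation, so assume $n$ is large. Since $\pi_\D$ kills $\chi^{[\nu,\pm]}$ and sends $\chi^{\{\lambda,\emptyset\}}\mapsto\chi^\lambda$, it carries $\sum_{\psi\in\Irr(\WD_n)}\psi$ to $\sum_{\chi\in\Irr(S_n)}\chi$; with Lemma~\ref{da-pi-lem} this shows $\{\TT^\Psi:\Psi\in\{\pi_\D(\Theta):\Theta\in\cM\}\setminus\{0\}\}$ is a perfect model for $S_n$. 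As $\pi_\D(\overline\Theta)=\overline{\pi_\D(\Theta)}$ and $\overline{\TT^\Theta}=\TT^{\overline\Theta}$, after replacing $\cM$ by $\overline\cM$ if necessary we may assume by Theorem~\ref{a-thm} that this $S_n$-model is a $\sgn$-model in the sense of Remark~\ref{all-a-models-rmk}. The core step is then to run through the slots $\Phi_0,\dots,\Phi_{\lfloor n/2\rfloor}$ of \eqref{m1-eq}--\eqref{m6-eq} and determine, using the definition of $\pi_\D$ together with Proposition~\ref{d-index-prop} and Lemma~\ref{d-lem}, exactly which multiplicity-free indices of $\MTI(\WD_n)$ project to each: for the $\fpf$-slots \eqref{m3-eq},\eqref{m4-eq},\eqref{m5-eq},\eqref{m6-eq} the only preimages up to strong equivalence are $\Theta_k$ and $\Theta_k^\diamond$ (the ``reversed'' candidates with $\fpf$ in the $\alpha_1$-coordinate are excluded by Lemma~\ref{d-lem}(a), and the ``refinement'' phenomenon of type $\BC$ does not occur because $\WD_2$ is abelian, whence $\cK^{\WD_2}_\fpf$ has full centralizer and $\left[\begin{smallmatrix}2&n-2\\\fpf&\id\\\one&\sgn\end{smallmatrix}\right]\equiv\left[\begin{smallmatrix}2&n-2\\\id&\id\\\one&\sgn\end{smallmatrix}\right]$), while for \eqref{m1-eq},\eqref{m2-eq} there is only a short further list. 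Since $\chi_\D^{\Theta_k}$ and $\chi_\D^{\Theta_k^\diamond}$ have the same ordinary constituents $\{\chi^{\{\lambda,\mu\}}:\{\lambda,\mu\}\in\OddDRows(n,n-2k)\}$, and these partition the ordinary irreducibles of $\WD_n$ as $k$ varies, a degree count — every multiplicity-free $\chi_\D^\Theta$ has an ordinary constituent by Proposition~\ref{d-index-prop}, so no further index can be adjoined without a double cover — forces $\cM$ to be strongly equivalent to a set consisting of exactly one of $\{\Theta_k,\Theta_k^\diamond\}$ for each $0\le k\le\lfloor n/2\rfloor$.

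When $n$ is odd we have $\Theta_k\sim\Theta_k^\diamond$ since $\Theta\sim\Theta^\vee\sim\Theta^\diamond$, so $\cM\sim\cP^\D_n$; undoing the possible replacement of $\cM$ by $\overline\cM$ shows that $\cM$ is strongly equivalent to $\cP^\D_n$ or $\overline{\cP^\D_n}$. When $n$ is even, $\chi_\D^{\Theta_k^\diamond}=(\chi_\D^{\Theta_k})^\diamond$ and $(\chi^{[\nu,\pm]})^\diamond=\chi^{[\nu,\mp]}$ while $(\cdot)^\diamond$ fixes each ordinary $\chi^{\{\lambda,\mu\}}$; combined with the fact (Proposition~\ref{d-index-prop}(d)) that the degenerate part of $\chi_\D^{\Theta_k}$ is $\sum_{\nu\in\OddRows(\tfrac n2,\tfrac n2-k)}\chi^{[\nu,\epsilon_\nu]}$, this means $\chi_\D^{\Theta_k}$ and $\chi_\D^{\Theta_k^\diamond}$ supply complementary halves of each pair $\chi^{[\nu,+]},\chi^{[\nu,-]}$. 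Since $\cM$ is, up to strong equivalence, one of $\{\Theta_k,\Theta_k^\diamond\}$ for each $k$, and since $\bigcup_k\OddRows(\tfrac n2,\tfrac n2-k)$ is all of $\{\nu:\nu\vdash\tfrac n2\}$, whichever choices are made the resulting sum of characters omits one of $\chi^{[\nu,\pm]}$ for every $\nu\vdash\tfrac n2$, contradicting $\sum_{\Theta\in\cM}\chi_\D^\Theta=\sum_{\psi\in\Irr(\WD_n)}\psi$; hence $\WD_n$ has no perfect model when $n$ is even.

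The step I expect to be the main obstacle is the slot-by-slot preimage analysis in the second paragraph: one must keep track simultaneously of the relations $\equiv$, $\vee$, and $\diamond$ on $\MTI(\WD_n)$, carefully exclude both the reversed preimages and the refinement-type preimages (paralleling the understanding of the pair $\left[\begin{smallmatrix}2&n-2\\\id&\id\\\one&\sgn\end{smallmatrix}\right]$, $\left[\begin{smallmatrix}2&n-2\\\id&\id\\\one_{-+}&\sgn\end{smallmatrix}\right]$ in type $\BC$, which here fail to combine because $\WD_2$ is abelian), and do the degree bookkeeping finely enough to see there is no room for extra indices. Verifying that every multiplicity-free $\chi_\D^\Theta$ has an ordinary constituent (by inspection of the cases of Proposition~\ref{d-index-prop}, using Propositions~\ref{pieri-d-prop}, \ref{ddd-prop1}, \ref{ddd-prop2}, \ref{ddd-prop3}) is the other point requiring care.
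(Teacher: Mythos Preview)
Your overall strategy matches the paper's: project via $\pi_\D$, invoke Remark~\ref{all-a-models-rmk}, lift each slot $\Phi_k$, and finish the even case by a degeneracy-counting argument. The genuine gap is the ``degree count'' sentence. For $k\ge 2$ your preimage analysis is fine and forces one of $\Theta_k,\Theta_k^\diamond$ into $\cM$. But for $k\in\{0,1\}$ the ``short further list'' you allude to really does contain new candidates, namely
\[
\Psi_0=\left[\begin{smallmatrix}n&0\\\id&\id\\\sgn&\one\end{smallmatrix}\right],\qquad
\Psi_1=\left[\begin{smallmatrix}n-2&2\\\id&\id\\\sgn&\one\end{smallmatrix}\right],
\]
whose ordinary constituents lie entirely inside $\OddDRows(n,n)\cup\OddDRows(n,n-2)$, so they do \emph{not} collide with the $\Theta_k$ already placed for $k\ge 2$. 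In addition, any $\Phi\in\cM$ with $\pi_\D(\Phi)=0$ must have the form $\left[\begin{smallmatrix}2&n-2\\\ast&\ast\\\one_{\pm\mp}&\gamma_1\end{smallmatrix}\right]$, and such $\Phi$ also have ordinary constituents landing in the same two strata. So the observation ``every multiplicity-free $\chi_\D^\Theta$ has an ordinary constituent'' does not by itself prevent $\cM$ from using $\Psi_0$ or $\Psi_1$ together with some of these $\Phi$'s instead of $\Theta_0,\Theta_1$. The paper closes this gap by computing $\chi_\D^{\Psi_0}$, $\chi_\D^{\Psi_1}$, and $\chi_\D^\Phi$ explicitly (the latter via \eqref{LR-d-eq}) and exhibiting specific missing constituents such as $\chi^{\{(3),(1^{n-3})\}}$ that cannot be recovered without creating an overlap with some $\chi_\D^{\Theta_k}$ already present; you need an argument of comparable precision here.

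For the even case your endgame is morally right but rests on the same unproved intermediate claim. The paper instead allows $\cM$ to contain $\Psi_0$, $\Psi_1$, and indices $\Phi$ with $\pi_\D(\Phi)=0$, and observes that \emph{every} such index (not just $\Theta_k,\Theta_k^\diamond$) contributes at most one member of each degenerate pair $\chi^{[\nu,\pm]}$, while the $\Phi$'s with $\pi_\D(\Phi)=0$ touch only $\nu\in\{(n/2),(1^{n/2})\}$. That weaker hypothesis already yields the contradiction and sidesteps the need to pin down $\cM$ exactly. Adjusting your even-case argument to this more permissive form would make it work without first resolving the $k\in\{0,1\}$ issue.
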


The equivalence
class of $\cP^{\D}_3$ gives the extra models for $S_4$ described in Example~\ref{a-extra-ex}.

\begin{proof}
Our argument is similar to the proof of Theorem~\ref{b-thm}. 
When $n$ is odd
it is clear from Proposition~\ref{d-index-prop} that $\cP^{\D}_n$
 is a perfect model for $\WD_n$.
 
Suppose $\cM$ is a set of model indices $\Theta \in \MTI(\WD_n)$
such that $\{\TT^\Theta : \Theta \in \cM\}$ is a perfect model for $\WD_n$.
Every perfect model for $\WD_n$ arises in this way.
Define $\cM_\A := \{ \pi_\D(\Theta)  : \Theta \in \cM\}\setminus\{0\}$.
Then
$\{ \TT^\Theta : \Theta \in \cM_\A\}$ is a perfect model for $S_n$ by Lemma~\ref{da-pi-lem}.
After possibly replacing $\cM$ by $\overline\cM := \{ \overline \Theta : \Theta \in \cM\}$,
we may assume that $\{ \TT^\Theta : \Theta \in \cM_\A\}$ is a $\sgn$-model.

Again let $ \Theta_k := \left[\begin{smallmatrix} 
2k & n-2k \\ 
\fpf & \id   \\ 
\one & \sgn
\end{smallmatrix}\right]$  for $0 \leq k \leq \lfloor n/2\rfloor$. If $n$ is odd then we have 
\[ \Theta_k \sim   \left[\begin{smallmatrix} 
2k & n-2k \\ 
\fpf & \id   \\ 
\one & \sgn
\end{smallmatrix}\right]^\diamond = \left[\begin{smallmatrix} 
2k & n-2k \\ 
\fpf^\diamond & \id   \\ 
\one & \sgn
\end{smallmatrix}\right] \sim \left[\begin{smallmatrix} 
2k & n-2k \\ 
\fpf & \id^+   \\ 
\one & \sgn
\end{smallmatrix}\right]
\sim
\left[\begin{smallmatrix} 
2k & n-2k \\ 
\fpf^\diamond & \id^+   \\ 
\one & \sgn
\end{smallmatrix}\right].
\]
Since $\WD_2$ is abelian we have 
\[\Theta_2 \equiv    \left[\begin{smallmatrix} 
2 & n-2 \\ 
\beta_0 & \beta_1   \\ 
\one & \sgn
\end{smallmatrix}\right] \quad\text{for all $\beta_0 \in \{\id,\id^+,\fpf,\fpf^\diamond\}$ and $\beta_1 \in \{\id,\id^+\}$.}
\]

Now assume $n \geq 5$ is odd.
Remark~\ref{all-a-models-rmk} tells us that $\cM_\A$ must contain elements of  each of 
the forms  \eqref{m3-eq}, \eqref{m4-eq}, \eqref{m5-eq}, and \eqref{m6-eq}.
By considering the limited possibilities for  model indices $\Theta \in \MTI(\WD_n)$ 
with $\chi_\D^\Theta$ multiplicity-free
that can serve as the preimages for these elements under $\pi_\D$,
we deduce from Proposition~\ref{d-index-prop} that 
$\cM$ must contain a unique model index strongly equivalent to 
$\Theta_k$
for at least each $2 \leq k \leq \lfloor n/2\rfloor$.

By similar reasoning, for $\cM_\A$ to contain an index of the form \eqref{m1-eq},
 $\cM$ must contain a unique element strongly equivalent to
$
  \Theta_0
  =\left[\begin{smallmatrix} 
0 & n \\ 
\fpf & \id   \\ 
\one & \sgn
\end{smallmatrix}\right]
\sim
  \left[\begin{smallmatrix} 
0 & n \\ 
\id & \id   \\ 
\one & \sgn
\end{smallmatrix}\right]
\sim
  \left[\begin{smallmatrix} 
0 & -n \\ 
\id & \id   \\ 
\one & \sgn
\end{smallmatrix}\right]
$ or $
 \Psi_0:=\left[\begin{smallmatrix} 
n & 0 \\ 
\id & \id   \\ 
\sgn & \one
\end{smallmatrix}\right]
$,
and
for $\cM_\A$ to contain an index of the form \eqref{m2-eq},
 $\cM$ must contain a unique element strongly equivalent to
$
  \Theta_1
$ or $\Psi_1:=\left[\begin{smallmatrix} 
n-2 & 2 \\ 
\id & \id   \\ 
\sgn & \one
\end{smallmatrix}\right]
$.

Thus, $\cM$ contains a subset of model indices strongly equivalent to 
$\cM^0 \sqcup \cM^1 \sqcup \cM^2$
where
 $\cM^0$ is either $\{ \Theta_0\}$ or $\{\Psi_0\}$,
$\cM^1$ is either $\{ \Theta_1\}$ or $\{\Psi_1\}$, and 
 $\cM^2 := \{ \Theta_k : 2 \leq k \leq \lfloor n/2\rfloor\}$.
Moreover, all elements $\Phi \in \cM$ outside this set must have $\pi_\D(\Phi)=0$,
so are of the form
\be\label{psi-eq}
\Phi=
 \left[\begin{smallmatrix} 
2 & n-2 \\ 
\beta_0 & \beta_1   \\ 
\gamma_0 & \gamma_1
\end{smallmatrix}\right] \quad\text{for some 
$\gamma_0 \in \{\one_{-+},\one_{+-}\}$ and $\gamma_1 \in \{\one,\sgn\}$,
}
\ee
where $\beta_0 \in \{\id,\id^+,\fpf,\fpf^\diamond\}$ and $\beta_1 \in \{\id,\id^+\}$ are arbitrary.
If $\cM^0 = \{ \Theta_0\}$ and $\cM^1 = \{\Theta_1\}$ then $\cM^0\sqcup \cM^1 \sqcup \cM^2 = \cP^{\D}_n$
so $\cM\sim \cP^{\D}_n$.
If this does not occur then the character 
$ \sum_{\Theta \in \cM^0\sqcup \cM^1 \sqcup \cM^2} \chi_\D^\Theta $
is missing several irreducible constituents. Specifically, 
we know that 
\[\chi_\D^{\Theta_0} = \sum_{p+q=n} \chi^{\{ (1^p),(1^{q})\}}
\quand
\chi_\D^{\Theta_1} = 
 \sum_{p+q=n-2} \chi^{\{ (2,1^{p}), (1^{q}) \}}+
\sum_{p+q = n-3} \chi^{\{ (3,1^{p}), (1^{q}) \}}\]
by Proposition~\ref{d-index-prop}, but one can compute using \eqref{LR-d-eq} that
$
\chi_\D^{\Psi_0} = \chi^{\{(1^n),\emptyset\}}
$
and
\[
\chi_\D^{\Psi_1} = \chi^{\{(1^{n-2}),(2)\}} +  \chi^{\{(2,1^{n-3}),(1)\}} +  \chi^{\{(1^{n-1}),(1)\}} + \chi^{\{(3,1^{n-3}),\emptyset\}} + \chi^{\{(2,1^{n-2}),\emptyset\}}.\]
All irreducible constituents of $\chi_\D^{\Theta_0} -\chi_\D^{\Psi_0}$ 
and $\chi_\D^{\Theta_1} -\chi_\D^{\Psi_1}$ must be accounted for in 
$ \sum_{\Theta \in \cM} \chi_\D^\Theta$.
However, it is impossible for these constituents to come from 
model indices of the form \eqref{psi-eq}. 
Indeed, it follows from \eqref{LR-d-eq} that the character of any such $\Phi$ is either 
\be\label{psi1-eq} 
\chi_\D^\Phi =\chi^{[(1),\pm]} \bullet_\D \sum_{p+q=n-2} \chi^{\{(1^p),(1^{q})\}}
=
\sum_{\substack{\{\lambda,\mu\} \vdash n \\ \lambda,\mu \in \cH}} \chi^{\{\lambda,\mu\}}
\ee
where $\cH$ is the set of partitions of the form $(1^{k+1})$ or $(2,1^k)$ for $k \in \NN$,
or
\be\label{psi2-eq}
\chi_\D^\Phi =\chi^{[(1),\pm]} \bullet_\D \sum_{p+q=n-2} \chi^{\{(p),(q)\}}
=
\sum_{\substack{\{\lambda,\mu\} \vdash n \\ \lambda,\mu \in \cH}} \chi^{\{\lambda^\top,\mu^\top\}}. \ee
In the first case $\chi_\D^\Phi$ shares the irreducible constituent $\chi^{\{((2,1^{n-3}),(1)\}}$ with $\chi_\D^{\Theta_1}$ and in the second case
$\chi_\D^\Phi $  shares the irreducible constituent $\chi^{\{(n-1),(1)\}}$ with $\chi_\D^{\Theta_{\lfloor n/2\rfloor}}$.
Thus if any $\Phi $ of the form \eqref{psi-eq} belongs to $ \cM$ then \eqref{psi1-eq} must hold and 
 $\cM^1 = \{\Theta_1'\}$. But then the missing irreducible constituent 
$\chi^{\{ (3), (1^{n-3})\}}$ of  $\chi_\D^{\Theta_1} -\chi_\D^{\Theta_1'}$ does not occur in $\chi_\D^\Phi$.

We conclude that it is necessary to have $\cM^0 = \{ \Theta_0\}$ or $\cM^1 = \{\Theta_1\}$,
so any perfect model $\cP$ for $\WD_n$ when $n\geq 5$ is odd has $\cP \sim \cP^{\D}_n$ 
or $\overline{\cP} \sim \cP^{\D}_n$.

Now we turn to the even case.
One can check directly that $\WD_4$ has no perfect models; we have verified this using 
the computer algebra system \textsf{GAP} \cite{GAP}. Assume that $n\geq 6$ is even. 
The formulas for $\chi_\D^{\Psi_0}$ and $\chi_\D^{\Psi_1}$ given above are still valid,
but now one has $\Theta_k \approx \Theta_k^\diamond$ rather than $\Theta_k\sim \Theta_k^\diamond$. 
By repeating the argument above, however, we can still deduce
that $\cM$ contains a unique element strongly equivalent to 
$\Theta_k $ or $ \Theta_k^\diamond$ for each $2 \leq k \leq \lfloor n/2\rfloor$;
a unique element strongly equivalent to
$\Theta_k $ or $ \Theta_k^\diamond$ or $\Psi_k$ for each $k \in \{0,1\}$;
and all other elements of the form \eqref{psi-eq}.

In view of Proposition~\ref{d-index-prop}, this means that 
the model indices in $\cM$ not of the form \eqref{psi-eq}
contribute to the sum $\sum_{\Theta \in \cM} \chi_\D^\Theta$ at most one element from each pair of degenerate irreducible characters $\chi^{[\nu,\pm]} \in \Irr(\WD_n)$.
But the only degenerate irreducible
characters appearing in $\chi_\D^\Phi$ when
 $\Phi$ has the form \eqref{psi-eq} are $\chi^{[(n/2),\pm]}$ or  $\chi^{[(1^{n/2}),\pm]}$.
 Thus it is impossible to have $\sum_{\Theta \in \cM} \chi_\D^\Theta= \sum_{\chi \in \Irr(\WD_n)}\chi$, and we conclude that no perfect models 
 for $\WD_n$ exist when $n\geq 6$ is even.
\end{proof}


\section{Model classification for exceptional groups}\label{e-sect}

Here we discuss which of the remaining irreducible finite
Coxeter groups have perfect models.

\subsection{Perfect models for dihedral groups}

Fix a positive integer $m$.
The finite dihedral group $\mathsf{I}_2(m)$  is the Coxeter group generated by two elements $s$ and $t$ subject only to the relations $s^2=t^2=(st)^m=1$.
We have already encountered the groups $I_2(1) =\{1\}$, $I_2(2) \cong S_2\times S_2$, $I_2(3) \cong S_3$, and $I_2(4) \cong \W_2$, so assume $m\geq 5$.  

The group $\mathsf{I}_2(m)$ has $2m$ elements and a unique nontrivial Coxeter automorphism
interchanging $s\leftrightarrow t$, which we denote by $\ast$. The longest element
is  $w_0 = ststs\cdots = tstst\cdots $ ($m$ factors).
 If $m$ is even then $\ast$ is an outer automorphism, $w_0$ is central, and $w_0^+ : =(w_0,\Ad(w_0)) = w_0 \in \mathsf{I}_2(m)$.
 If $m$ is odd then $\ast = \Ad(w_0)$ and $w_0^+ = (w_0,\ast)\in \mathsf{I}_2(m)^+$.

For either parity of $m$, the only perfect conjugacy classes in $\mathsf{I}_2(m)^+$ are
$\{ 1\}$ and $\{ w_0^+\}$, which are both central. The only model triples $(J,\cK,\sigma)$ for $\mathsf{I}_2(m)$ are therefore
\[ 
\ba 
\TT^{\{s,t\}}_{\sigma} &:= (\{s,t\}, \{1\}, \sigma) \equiv (\{s,t\}, \{w_0^+\}, \sigma) &&\text{for any linear character $\sigma$ of $\mathsf{I}_2(m)$}, \\
\TT^{\{s\}}_{\sigma} &:= (\{s\}, \{1\}, \sigma) \equiv (\{s\}, \{s\}, \sigma)&&\text{for $\sigma \in \{\one,\sgn\}$}, 
\\
\TT^{\{t\}}_{\sigma} &:=(\{t\}, \{1\}, \sigma) \equiv (\{t\}, \{t\}, \sigma)&&\text{for $\sigma \in \{\one,\sgn\}$},
\\
\TT^{\varnothing} &:=(\varnothing, \{1\}, \one).
\ea
\]
We can ignore $\TT^{\varnothing}$ 
since its character is not multiplicity-free. 
One has $(\TT^{\{s\}}_{\sigma})^\ast = \TT^{\{t\}}_{\sigma}$
so if $m$ is odd then $\TT^{\{s\}}_{\sigma}$ and $ \TT^{\{t\}}_{\sigma}$ are strongly equivalent.

Let $\zeta := e^{2\pi \sqrt{-1}/m} \in \CC$ 
 and define $\rho_h : \mathsf{I}_2(m) \to \CC$ for $h \in \NN$ to be the map 
 that sends all elements of odd length to zero and has $\rho_h( (st)^k) =\rho_h((ts)^k) =  \zeta^{hk} + \zeta^{-hk}$ for $k \in \NN$.
It is well-known \cite[\S5.3]{JPS} that if $m$ is odd then the distinct irreducible characters of $\mathsf{I}_2(m)$
consist of $\rho_h$ for $h \in [\frac{m-1}{2}]$ 
plus the linear characters $\one :s,t\mapsto 1 $ and $\sgn : s,t\mapsto -1$;
while if $m$ is even then the distinct irreducible characters of $\mathsf{I}_2(m)$
consist of $\rho_h$ for $h \in [\frac{m}{2}-1]$ 
plus the linear characters $\one $, $\sgn$, $\one_{+-}$ and $\one_{-+}$,
where   $\one_{\pm\mp}$ is the class function
sending $s \mapsto \pm 1$ and $t \mapsto \mp 1$. 

Evidently $\chi^{\TT^{\{s,t\}}_{\sigma}}  = \sigma$. The following
identities are straightforward exercises from Frobenius reciprocity.
 If $m$ is odd  then
 $
 \chi^{\TT^{\{s\}}_{\sigma}} = \chi^{\TT^{\{t\}}_{\sigma}} = \sigma + \sum_h \rho_h
$ for $\sigma \in \{\one,\sgn\}$.
If $m$ is even then 
\be\ba
 \chi^{\TT^{\{s\}}_{\one} } &=\Ind_{\langle s\rangle}^{\langle s,t\rangle}(\one)= \one + \one_{+-} + \sum_h \rho_h,
 \quad
  \chi^{\TT^{\{s\}}_{\sgn} } =\Ind_{\langle s\rangle}^{\langle s,t\rangle}(\sgn)= \sgn + \one_{-+} + \sum_h \rho_h,
\\
 \chi^{\TT^{\{t\}}_{\one} }  &=\Ind_{\langle t\rangle}^{\langle s,t\rangle}(\one)= \one + \one_{-+} + \sum_h \rho_h,
 \quad
  \chi^{\TT^{\{t\}}_{\sgn} } =\Ind_{\langle t\rangle}^{\langle s,t\rangle}(\sgn)= \sgn + \one_{+-} + \sum_h \rho_h.
  \ea
  \ee
Recall our notions of model equivalence from Section~\ref{model-equiv-sect}. The following 
now is evident:

\begin{proposition}\label{i2-prop}
Assume $m\geq 5$.
If $m$ is odd then
$
 \left\{ \TT^{\{s,t\}}_{\one}, \TT^{\{s\}}_{\sgn}\right\} 
\approx \left\{ \TT^{\{s,t\}}_{\sgn}, \TT^{\{s\}}_{\one}\right\} 
 $
are perfect models for $\mathsf{I}_2(m)$ and every perfect model is strongly equivalent to one of these.
If $m$ is even then $
 \left\{ \TT^{\{s,t\}}_{\one}, \TT^{\{s,t\}}_{\one_{+-}}, \TT^{\{s\}}_{\sgn}\right\} \approx
  \left\{ \TT^{\{s,t\}}_{\one}, \TT^{\{s,t\}}_{\one_{-+}}, \TT^{\{t\}}_{\sgn}\right\}
\approx \left\{ \TT^{\{s,t\}}_{\sgn}, \TT^{\{s,t\}}_{\one_{-+}}, \TT^{\{s\}}_{\one}\right\} \approx \left\{ \TT^{\{s,t\}}_{\sgn}, \TT^{\{s,t\}}_{\one_{+-}},\TT^{\{t\}}_{\one}\right\} $
are perfect models for $\mathsf{I}_2(m)$ and every perfect model is strongly equivalent to one of these.
\end{proposition}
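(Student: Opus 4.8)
The plan is to argue purely by bookkeeping on the (very short) list of model triples for $\mathsf{I}_2(m)$ established just above the statement, using the character formulas recorded there. First I would dispose of $\TT^{\varnothing}$: its character $\Ind_{\{1\}}^{\mathsf{I}_2(m)}(\one)$ is the regular character, which is not multiplicity-free once $|\mathsf{I}_2(m)| = 2m > 2$, so no perfect model can contain it. This leaves only the triples $\TT^{\{s,t\}}_\sigma$, $\TT^{\{s\}}_\sigma$, $\TT^{\{t\}}_\sigma$, and we must assemble a multiset of these whose characters sum to $\sum_{\psi \in \Irr(\mathsf{I}_2(m))} \psi$. The key structural observation is that \emph{every} nontrivial $\rho_h$ appears in the character of each of $\TT^{\{s\}}_\sigma$ and $\TT^{\{t\}}_\sigma$ and in none of the $\TT^{\{s,t\}}_\sigma$; since each $\rho_h$ must appear exactly once in the total, a perfect model contains \emph{exactly one} triple of the form $\TT^{\{s\}}_\sigma$ or $\TT^{\{t\}}_\sigma$, and all remaining triples are of the form $\TT^{\{s,t\}}_\sigma$, i.e.\ are linear characters of $\mathsf{I}_2(m)$.

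Next I would split on the parity of $m$. If $m$ is odd, the linear characters are $\one,\sgn$ and the chosen rank-one triple contributes $\sigma + \sum_h \rho_h$ for $\sigma\in\{\one,\sgn\}$; to recover $\one + \sgn + \sum_h\rho_h$ we must add exactly the missing linear character, giving $\{\TT^{\{s,t\}}_{\sgn}, \TT^{\{s\}}_{\one}\}$ or $\{\TT^{\{s,t\}}_{\one}, \TT^{\{s\}}_{\sgn}\}$ (with $\TT^{\{s\}}$ interchangeable with $\TT^{\{t\}}$ up to strong equivalence, as already noted via $(\TT^{\{s\}}_\sigma)^\ast = \TT^{\{t\}}_\sigma$ and $\ast = \Ad(w_0)$). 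For $m$ even the linear characters are $\one,\sgn,\one_{+-},\one_{-+}$; the four possible rank-one contributions are listed explicitly, each of the form $(\text{two of the four linear characters}) + \sum_h \rho_h$, and in each case the two linear characters \emph{not} appearing must be supplied by two triples $\TT^{\{s,t\}}_{\sigma}$. This yields exactly the four displayed models; I would then verify that these four (together with the $s\leftrightarrow t$ swap, which is now an \emph{outer} automorphism and so falls under $\approx$) are linked by the relations $\overline{\TT} = (\cdot)$ (tensoring by $\sgn$, which swaps $\one\leftrightarrow\sgn$ and $\one_{+-}\leftrightarrow\one_{-+}$) and $\TT^\alpha$ for $\alpha = \ast$, establishing the chain of $\approx$'s in the statement.

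The only genuinely non-routine point — and the one I would be most careful about — is the equivalence claims, i.e.\ checking that the four even-$m$ models really do form a single $\approx$-class and that in the odd case the two models are $\approx$-equivalent (and that one cannot collapse them further, so that ``every perfect model is strongly equivalent to one of these'' is the sharp statement). This requires tracking how $\overline{(\cdot)}$, the inner automorphism $\Ad(w_0)$, and the outer automorphism $\ast$ act on the labels $\one,\sgn,\one_{+-},\one_{-+}$ and on $\TT^{\{s\}}$ versus $\TT^{\{t\}}$, using the identities $\one^\diamond=\one$, $\sgn^\diamond=\sgn$, $\one_{\pm\mp}^\ast = \one_{\mp\pm}$ recorded for type $\D_2$ (which applies verbatim since $\mathsf{I}_2(4)$ is type $\BC_2$ but the $\one_{\pm\mp}$ bookkeeping is identical), plus the dual operation $\TT\mapsto\TT^\vee$, which for these central classes acts trivially on $\cK$ and by $\Ad(w_0)$ on $\sigma$. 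Everything else is immediate from the character table and Frobenius reciprocity, so I would present the argument compactly, citing the formulas above rather than re-deriving them.
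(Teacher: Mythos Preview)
Your plan is correct and matches the paper's approach: the paper in fact gives no proof at all, simply declaring the proposition ``evident'' from the character formulas and the list of model triples recorded immediately before it, and your proposal spells out precisely that evident bookkeeping (discard $\TT^\varnothing$, observe that each $\rho_h$ forces exactly one rank-one triple, then match the remaining linear characters and track the $\approx$-relations via $\overline{(\cdot)}$ and $\ast$). A couple of your side remarks are slightly off --- the reference to type $\D_2$ notation and the description of $\TT^\vee$ are not quite right for $\mathsf{I}_2(m)$ --- but these are inessential to the argument, which goes through exactly as you describe.
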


\subsection{Perfect models for exceptional groups}

The finite Coxeter system $(W,S) = (\WH_3,\{h_1,h_2,h_3\})$ of type $\mathsf{H}_3$ has Coxeter diagram
\begin{center}
    \begin{tikzpicture}[xscale=1.2, yscale=1,>=latex,baseline=(z.base)]
    \node at (0,0) (z) {};
      \node at (-1,0) (T0) {$h_1$};
      \node at (0,0) (T1) {$h_2$};
      \node at (1,0) (T2) {$h_3$};
      \draw[-]  (T0) -- (T1) node[midway,above,scale=0.75] {$5$}; 
      \draw[-] (T1) -- (T2) node[midway,above,scale=0.75] {$3$}; 
     \end{tikzpicture}
     \end{center}
and may be embedded in $\WD_6$ by setting $h_1 := s_1s_3$, $h_2 := s_2s_4$, and $h_3 := s_{-1}s_5$.
There are no nontrivial Coxeter automorphisms of $\WH_3$, the only linear characters are $\one$ and $\sgn$,
and the only perfect involutions are $1$ and the longest element $w_0$, which is central.

For each subset $J \subseteq \{ h_1,h_2,h_3\}$ and linear character $\sigma : \langle J\rangle \to \QQ$
let $\TT^J_\sigma$ denote the model triple $(J,\{1\},\sigma)$.
Let $\one_{+-}$ and $\one_{-+}$ be the two linear characters of $\langle h_1,h_3\rangle \cong S_2\times S_2$
not given by $\one$ or $\sgn$.
We have checked the following propositions using \textsf{GAP} \cite{GAP}:

\begin{proposition}\label{h3-prop}
The sets 
$
\left\{ \TT^{\{h_1,h_2,h_3\}}_{\one},\TT^{\{h_1,h_2,h_3\}}_{\sgn}, \TT_{\one_{+-}}^{\{h_1,h_3\}}\right\}
\approx
\left\{ \TT^{\{h_1,h_2,h_3\}}_{\one},\TT^{\{h_1,h_2,h_3\}}_{\sgn}, \TT_{\one_{-+}}^{\{h_1,h_3\}}\right\}
$
and
$
 \left\{ \TT^{\{h_1,h_2\}}_{\one}, \TT^{\{h_2,h_3\}}_{\sgn}\right\} \approx
  \left\{ \TT^{\{h_1,h_2\}}_{\sgn}, \TT^{\{h_2,h_3\}}_{\one}\right\} 
$
are perfect models for $\WH_3$, and every perfect model is strongly equivalent to one of these four.
\end{proposition}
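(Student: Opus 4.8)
The plan is to cut down to a short list of candidates by a dimension count and then confirm the survivors by direct computation. First I would observe that every factorizable perfect conjugacy class occurring in a standard parabolic subsystem $(W_J,J)$ of $\WH_3$ (with $J\subseteq\{h_1,h_2,h_3\}$) is a singleton with full centralizer $W_J$: this is clear for $\{1\}$ and for $\{w_J^+\}$, since $w_J^+$ is a central involution of $W_J^+$, and the only remaining parabolics to inspect are the abelian $\langle h_1,h_3\rangle\cong S_2\times S_2$ and the dihedral $\langle h_1,h_2\rangle\cong\mathsf{I}_2(5)$ and $\langle h_2,h_3\rangle\cong\mathsf{I}_2(3)$, all handled by the analysis above. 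Hence every such model triple is equivalent to one of the form $\TT^J_\sigma=(J,\{1\},\sigma)$ with $\chi^{\TT^J_\sigma}=\Ind_{W_J}^{\WH_3}(\sigma)$ for a linear character $\sigma$ of $W_J$. By Theorem~\ref{tech2-thm} the non-factorizable perfect classes (which occur only when $J=\langle h_1,h_3\rangle$) contribute no multiplicity-free triples, and by Theorem~\ref{tech1-thm} a multiplicity-free triple forces $|J|\geq 2$, so $W_J\in\{\mathsf{I}_2(5),\,S_2\times S_2,\,\mathsf{I}_2(3),\,\WH_3\}$.

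Next I would count degrees. The induced characters have degree $[\WH_3:W_J]\in\{12,30,20,1\}$, while $\sum_{\psi\in\Irr(\WH_3)}\psi(1)=32$; since $\mathsf{I}_2(5)$, $\mathsf{I}_2(3)$, $\WH_3$ each have only two linear characters and $S_2\times S_2$ has four, the only ways to write $32$ as such a sum are $32=12+20$ and $32=30+1+1$. Thus a perfect model has one of the two shapes $\{\TT^{\langle h_1,h_2\rangle}_{\sigma_1},\TT^{\langle h_2,h_3\rangle}_{\sigma_2}\}$ with $\sigma_i\in\{\one,\sgn\}$, or $\{\TT^{\langle h_1,h_3\rangle}_{\sigma},\TT^{\{h_1,h_2,h_3\}}_{\sigma'},\TT^{\{h_1,h_2,h_3\}}_{\sigma''}\}$ with $\{\sigma',\sigma''\}=\{\one,\sgn\}$. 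In the second shape $\TT^{\{h_1,h_2,h_3\}}_{\sigma'}$ has character $\sigma'$, so $\one$ and $\sgn$ are already used, and Frobenius reciprocity ($\langle\Ind_{W_J}^{\WH_3}(\sigma),\one\rangle=\langle\sigma,\one\rangle$ and $\langle\Ind_{W_J}^{\WH_3}(\sigma),\sgn\rangle=\langle\sigma,\Res_{W_J}^{\WH_3}\sgn\rangle$) forces $\sigma\in\{\one_{+-},\one_{-+}\}$. In the first shape the same reciprocity shows that $\one$ (resp.\ $\sgn$) is a constituent of $\Ind_{W_J}^{\WH_3}(\sigma_i)$ exactly when $\sigma_i=\one$ (resp.\ $\sigma_i=\sgn$), so requiring each of $\one$ and $\sgn$ to appear once forces $(\sigma_1,\sigma_2)\in\{(\one,\sgn),(\sgn,\one)\}$.

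It then remains to check that these four candidate sets are perfect models and to sort out the equivalences. The former is the \textsf{GAP} computation: one decomposes $\Ind_{\langle h_1,h_2\rangle}^{\WH_3}(\one)$, $\Ind_{\langle h_2,h_3\rangle}^{\WH_3}(\sgn)$, and $\Ind_{\langle h_1,h_3\rangle}^{\WH_3}(\one_{+-})$ into irreducibles, verifying in each shape that the characters are multiplicity-free with complementary constituent sets; the statements for the other representatives follow by tensoring with $\sgn$, which permutes $\Irr(\WH_3)$ and fixes $\{\one,\sgn\}$. For the equivalences: $\WH_3$ has no nontrivial Coxeter automorphism, so $\sim$ and $\approx$ fix each subset $J$, placing the two shapes in different classes; within the first shape the two models are interchanged by $\TT\mapsto\overline\TT$ (hence are $\approx$) and are not $\sim$-equivalent, since $\TT^{\langle h_1,h_2\rangle}_\one\not\equiv\TT^{\langle h_1,h_2\rangle}_\sgn$ and duality acts trivially here; within the second shape the models with $\sigma=\one_{+-}$ and $\sigma=\one_{-+}$ are again interchanged by $\overline{(\cdot)}$, because $\one_{+-}\cdot(\Res_{\langle h_1,h_3\rangle}^{\WH_3}\sgn)=\one_{-+}$. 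This gives exactly four strong-equivalence classes, grouped into the two equivalence classes in the statement.

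The only genuine obstacle is the explicit decomposition of those three induced characters together with the multiplicity-freeness check; everything else is bookkeeping with the perfect-involution structure and Theorems~\ref{tech1-thm} and \ref{tech2-thm}. That decomposition is what the appeal to \textsf{GAP} \cite{GAP} handles, although it could instead be done by hand via the isomorphism $\WH_3\cong\ZZ/2\times\mathsf{A}_5$ and the embedding $\WH_3\hookrightarrow\WD_6$, at the cost of a considerably longer calculation.
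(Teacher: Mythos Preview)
Your proposal is correct. The paper itself offers no argument beyond ``We have checked the following propositions using \textsf{GAP},'' so your approach is genuinely different: rather than an unstructured search, you first reduce every multiplicity-free model triple to the form $\TT^J_\sigma$ with $|J|\ge 2$ via Theorems~\ref{tech1-thm} and~\ref{tech2-thm}, then use the degree equation $\sum_{\psi}\psi(1)=32$ to pin down the two possible shapes $12+20$ and $30+1+1$, and finally use Frobenius reciprocity on the constituents $\one$ and $\sgn$ to determine the linear characters. This leaves only the verification that the surviving induced characters are multiplicity-free with the correct constituents, which is the sole place where a computation (or a short hand calculation via $\WH_3\cong\ZZ/2\times\mathsf{A}_5$) is still needed. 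Your treatment of the equivalences is also correct: since $\Aut(\WH_3,S)$ is trivial and $w_0$ is central, both $\sim$ and $\approx$ preserve $J$, duality is trivial modulo $\equiv$, and the bar involution pairs the four strong-equivalence classes into the two $\approx$-classes as stated. One small omission worth noting: your first paragraph names only the rank-two parabolics explicitly, but the rank-$\leq 1$ cases ($W_J$ trivial or $\cong S_2$) are equally immediate and are what Theorem~\ref{tech1-thm} then discards.
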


\begin{proposition}
The Coxeter groups of types 
 $\mathsf{E}_6$,
  $\mathsf{E}_7$,
   $\mathsf{E}_8$,
    $\mathsf{F}_4$,
    and  $\mathsf{H}_4$ have no perfect models.
    \end{proposition}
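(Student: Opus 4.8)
The plan is to reduce the statement to a finite, explicit verification. Let $W \in \{\mathsf{E}_6,\mathsf{E}_7,\mathsf{E}_8,\mathsf{F}_4,\mathsf{H}_4\}$ with Coxeter system $(W,S)$, and suppose $\TT = (J,\cK,\sigma)$ is a model triple that could occur in a perfect model, so that $\chi^\TT$ is multiplicity-free. Writing $z \in \cK$ for the minimal-length element and using transitivity of induction, $\chi^\TT = \Ind_{W_J}^W(\eta)$ where $\eta := \Ind_{C_J(z)}^{W_J}\Res^{W_J}_{C_J(z)}(\sigma)$ is a nonzero character of $W_J$; if $\psi \in \Irr(W_J)$ is any constituent of $\eta$, then $\Ind_{W_J}^W(\psi)$ is a summand of $\chi^\TT$. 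As $W$ is irreducible, Theorem~\ref{tech1-thm} shows $\Ind_{W_J}^W(\psi)$ is not multiplicity-free whenever $|S\setminus J| \geq 2$, so we must have $|S\setminus J| \leq 1$. Moreover $\TT$ is factorizable by Theorem~\ref{tech2-thm}. Hence the candidate triples are those with $J = S$, together with those with $J = S\setminus\{s\}$ for which $\aut(\cK)$ stabilizes each irreducible component of the maximal standard parabolic $W_J$.

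For each of the five groups I would enumerate these candidates and verify the proposition by computer. The perfect conjugacy classes of $W^+$ are given by \cite[Ex. 9.2]{RV}; for $\mathsf{E}_6$ and $\mathsf{F}_4$ one must also include the classes twisted by the order-two diagram automorphism. Each maximal standard parabolic $W_J$ is a Coxeter group all of whose perfect conjugacy classes have already been determined --- in Sections~\ref{a-sect}--\ref{d-sect} for classical factors, in Proposition~\ref{h3-prop} for factors of type $\mathsf{H}_3$, in the dihedral analysis of this section for dihedral factors, and in \cite[Ex. 9.2]{RV} for the factors of type $\mathsf{E}_6$ and $\mathsf{E}_7$ that occur as maximal parabolics inside $\mathsf{E}_7$ and $\mathsf{E}_8$ --- and are combined via the tensor construction of Section~\ref{reducible-sect}. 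Together with the finitely many linear characters $\sigma$ of each $W_J$, this yields a finite list of candidate triples. For each one I would compute $\chi^\TT = \Ind_{C_J(z)}^W\Res^{W_J}_{C_J(z)}(\sigma)$ in a system such as \textsf{GAP} \cite{GAP}, discard those that are not multiplicity-free, record for each survivor the set of its irreducible constituents, and then test whether some subfamily of these sets partitions $\Irr(W)$ --- equivalently, whether the corresponding characters sum to $\sum_{\psi \in \Irr(W)}\psi$. The content of the proposition is that no such subfamily exists. The search can be shortened using the degree identity $\sum_{\TT \in \sP}[W : C_J(z)] = \sum_{\psi \in \Irr(W)}\psi(1)$ and the equivalences $\overline\TT$, $\TT^\vee$, $\TT^\alpha$ of Section~\ref{model-equiv-sect}, which collapse many candidates, and by fixing one triple that realizes the trivial character.

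The main obstacle is computational scale rather than conceptual content. For $W$ of type $\mathsf{E}_8$ the group has order $696729600$ with $112$ irreducible characters, so computing the induced characters from the centralizer subgroups $C_J(z)$ and solving the resulting exact-cover problem over $\Irr(W)$ is feasible only with a careful implementation. A secondary difficulty is completeness of the bookkeeping: one must be sure that every perfect conjugacy class has been listed --- including the twisted classes in $W^+$ when $\Aut(W,S)\neq 1$ and the classes in reducible maximal parabolics whose factors carry their own Coxeter automorphisms --- so that no multiplicity-free model triple is overlooked. Granting these lists, the remaining verification is the finite check performed in \textsf{GAP}.
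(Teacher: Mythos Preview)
Your proposal is correct and follows essentially the same route as the paper: the paper offers no written argument for this proposition beyond the sentence ``We have checked the following propositions using \textsf{GAP},'' so its proof is precisely the finite computer verification you describe. Your write-up is in fact more explicit than the paper's, since you spell out how Theorems~\ref{tech1-thm} and~\ref{tech2-thm} reduce the search to maximal parabolic subgroups and factorizable triples before invoking the machine check.
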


\appendix
\section{Proofs of Theorems~\ref{tech1-thm} and \ref{tech2-thm}}\label{app-sect}

This final section contains the proofs of two technical results from Sections~\ref{intro-sect} and \ref{prelim-sect}.
If $\lambda=(\lambda_1,\dots,\lambda_j)$ and $\mu=(\mu_1,\dots,\mu_k)$ are partitions with $j\leq k$,
then we define $\lambda+\mu=\mu+\lambda:=(\lambda_1+\mu_1,\dots,\lambda_j+\mu_j,\mu_{j+1},\dots,\mu_k)$ and $\lambda\cup\mu$ to be the partition sorting $(\lambda_1,\dots,\lambda_j,\mu_1,\dots,\mu_k)$.
The arguments below frequently makes use of the following identity \cite[Lem. 3.2]{Stembridge}:
\be\label{32-eq}
c^{\nu+(1^r)}_{\lambda(\mu+(1^r))} \geq c_{\lambda\mu}^\nu\quand c^{\nu \cup (r)}_{\lambda(\mu \cup (r))} \geq c_{\lambda\mu}^\nu\ee
for all partitions $\lambda$, $\mu$, $\nu$ and integers $r \in \NN$.

\begin{lemma}\label{lrc-lem}
If $\lambda$ and $\mu$ are any partitions then $c_{\lambda\mu}^{\lambda+\mu} \geq 1$ and $c_{\lambda\mu}^{\lambda\cup \mu} \geq 1$.
\end{lemma}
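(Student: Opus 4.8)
The plan is to prove both inequalities by induction, most naturally on the number of boxes of the smaller partition, using the two monotonicity relations in \eqref{32-eq} as the inductive engine. For the claim $c_{\lambda\mu}^{\lambda\cup\mu}\geq 1$, I would induct on the number of parts of $\mu$ (equivalently, on $|\mu|$). The base case $\mu=\emptyset$ is trivial since $c_{\lambda\emptyset}^{\lambda}=1$. For the inductive step, write $\mu=(\mu_1)\cup\mu'$ where $\mu'=(\mu_2,\mu_3,\dots)$ has fewer parts. Then $\lambda\cup\mu=(\lambda\cup\mu')\cup(\mu_1)$, so the second inequality in \eqref{32-eq} with $r=\mu_1$, applied to the triple $(\lambda,\mu',\lambda\cup\mu')$, gives $c^{(\lambda\cup\mu')\cup(\mu_1)}_{\lambda(\mu'\cup(\mu_1))}\geq c_{\lambda\mu'}^{\lambda\cup\mu'}$; the right side is $\geq 1$ by the inductive hypothesis, and $\mu'\cup(\mu_1)$ is just a reordering of $\mu$, so the left side equals $c^{\lambda\cup\mu}_{\lambda\mu}$, as desired.

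For the claim $c_{\lambda\mu}^{\lambda+\mu}\geq 1$, I would similarly induct on the number of (nonzero) parts of $\mu$, this time peeling off the \emph{last} part. Writing $\mu=\mu'+(0,\dots,0,\mu_k)$ is not quite the right move; instead, the cleanest route is to observe that adding a single column of height $r$ corresponds to the operation $\nu\mapsto\nu+(1^r)$, and iterate. Concretely, if $\mu$ has largest part $\mu_1$ occurring in rows $1$ through $r$ (i.e. $\mu_r\geq 1$), peel off one column: let $\mu^-$ be $\mu$ with its first column removed, so $\mu=\mu^-+(1^r)$ where $r$ is the number of nonzero parts of $\mu$. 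Then $\lambda+\mu=(\lambda+\mu^-)+(1^r)$, and the first inequality in \eqref{32-eq} applied to $(\lambda,\mu^-,\lambda+\mu^-)$ with this $r$ yields $c^{(\lambda+\mu^-)+(1^r)}_{\lambda(\mu^-+(1^r))}\geq c_{\lambda\mu^-}^{\lambda+\mu^-}\geq 1$ by induction (the base case being $\mu=\emptyset$ again, or alternatively $\mu$ a single column, which follows directly from the Pieri rule). This gives $c_{\lambda\mu}^{\lambda+\mu}\geq 1$.

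I expect no serious obstacle here; the lemma is essentially an immediate unwinding of \eqref{32-eq}, and the only thing to be careful about is bookkeeping — making sure that reorderings of the tuples $\mu'\cup(\mu_1)$ and $\mu^-+(1^r)$ genuinely recover $\mu$ as a partition, and that the target partitions $\lambda\cup\mu$ and $\lambda+\mu$ are assembled correctly under the peeling operations. One could also dispatch the whole statement in one line by invoking the Littlewood--Richardson rule directly: the unique filling of $\lambda\cup\mu/\lambda$ (resp. $\lambda+\mu/\lambda$) by its own row-reading-word content is a valid Littlewood--Richardson tableau, exhibiting the coefficient as $\geq 1$. If a self-contained inductive argument is preferred in keeping with the surrounding appendix, the plan above via \eqref{32-eq} is the most economical, since that identity is already in hand.
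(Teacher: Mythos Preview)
Your argument is correct. For $c_{\lambda\mu}^{\lambda+\mu}\geq 1$ you do exactly what the paper does: induct by stripping a full column from $\mu$ via the first inequality in \eqref{32-eq}. For $c_{\lambda\mu}^{\lambda\cup\mu}\geq 1$ your route differs: you run a parallel induction, stripping a row from $\mu$ via the second inequality in \eqref{32-eq}, whereas the paper instead observes that $(\lambda\cup\mu)^\top=\lambda^\top+\mu^\top$ and $c_{\lambda\mu}^{\nu}=c_{\lambda^\top\mu^\top}^{\nu^\top}$, so the $\cup$-case follows in one line from the already-proved $+$-case. The paper's transpose trick is shorter and avoids redoing the induction; your version is equally valid and perhaps more symmetric with the first half, but once the $+$-case is in hand the transpose observation is the more economical finish.
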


\begin{proof}
Write  $\ell(\mu)$ for the number of nonzero parts of $\mu$.
If $\ell(\mu) = 0$ then $\mu =\emptyset$ and the result holds as $c^{\lambda}_{\lambda\emptyset}=1$.
If $\ell(\mu) =r >0$ then  \eqref{32-eq} implies that
$c_{\lambda\mu}^{\lambda+\mu} \geq c_{\lambda\tilde \mu}^{\lambda+\tilde \mu} $ where 
$\tilde \mu := (\mu_1-1,\mu_2-1,\dots,\mu_r-1)$, and in this case we may assume
by induction on $|\lambda|+|\mu|$ that $ c_{\lambda\tilde \mu}^{\lambda+\tilde \mu} \geq 1$.
The other identity holds since $
c_{\lambda\mu}^{\lambda\cup\mu}=c_{\lambda^\top\mu^\top}^{(\lambda\cup\mu)^\top}=c_{\lambda^\top\mu^\top}^{\lambda^\top+\mu^\top}\ge 1.
$
\end{proof}

Let $(W,S)$ be an irreducible finite Coxeter system and let $J\subseteq S$ be a subset with $|S\setminus J| \geq 2$.
Theorem~\ref{tech1-thm} asserts that $\Ind_{W_J}^W(\chi)$ is not multiplicity-free for all $\chi \in \Irr(W)$. 

\begin{proof}[Proof of Theorem~\ref{tech1-thm}]
By the transitivity of induction we may assume that $|S\setminus J| = 2$. We have checked the desired property 
using \textsf{GAP} \cite{GAP} for each finite exceptional Coxeter group.
It remains to prove the result when $W \in \{S_n,\W_n,\WD_n\}$ for all $n\geq 3$.

First suppose $W=S_n$ so that $S=\{s_1,s_2,\dots,s_{n-1}\}$ and  $S\setminus J=\{s_i,s_j\}$ for some $1\leq i<j<n$.
Then we have $W_J=S_{i}\times S_{j-i}\times S_{n-j}$
and it suffices to show that $\chi^\lambda \bullet_\A \chi^\mu \bullet_A \chi^\nu$ is never multiplicity-free if $\lambda$, $\mu$, and $\nu$ are nonempty partitions. 
This is easy to derive from the Pieri rules when two of these partitions are equal to $(1)$.

A partition is a \defn{rectangle} if it has the form $(a^i) = (a,a,\dots,a)$.
Stembridge \cite[Thm. 3.1]{Stembridge} gives 
necessary and sufficient criteria for $\chi^\lambda \bullet_\A \chi^\mu $ to be multiplicity-free.
This result implies that  $\chi^\mu \bullet_A \chi^\nu$
can only be multiplicity-free if at least one of $\mu$ or $\nu$ is a rectangle, so
$\chi^\lambda \bullet_\A \chi^\mu \bullet_A \chi^\nu$ can only be multiplicity-free if 
at least two of $\lambda,\mu,\nu$ are rectangles, say $\lambda=(a^i)$ and $\mu=(b^j)$. Suppose $\nu$ is not a rectangle and without loss of generality assume  $a\ge b$.

We claim that there is a non-rectangle $\rho$ such that $c_{(a^i)(b^j)}^{\rho}\ge1$. If $a>b$ and $i\neq j$, then one can take $\rho=(a^i,b^j)$ 
since Lemma~\ref{lrc-lem} implies that
$c_{(a^i)(b^j)}^{(a^ib^j)}\ge 1$.
 If $a=b$ and $\max\{i,j\}>1$, then
one can take $\rho=(2a,a^{i+j-2})$
since using the second identity in \eqref{32-eq} one can check that
 $c_{(a^i)(a^j)}^{(2a,a^{i+j-2})}\ge c_{(a)(a)}^{(2a)}=1$. 
 If $a=b>1$ and $i=j=1$,
 then one can set $\rho = (a+1,a-1)$ since the Pieri rules give $c_{(a)(a)}^{(a+1,a-1)}\ge c_{(a)(1)}^{(a+1)}=1$. 
 The only remaining case is when $a=b=1$ and $i=j=1$ so that $\lambda=\mu=(1)$,
 which we already considered.
 We conclude that if $\lambda$, $\mu$, and $\nu$ are not all rectangles then 
 $\chi^\lambda \bullet_\A \chi^\mu \bullet_A \chi^\nu$ is not multiplicity-free.

Now assume further that $\nu=(c^k)$ and $a\ge b\ge c$. A \defn{$k$-line rectangle} is a rectangular partition with either $k$ rows or $k$ columns.
Assume at least two of $\lambda$, $\mu$, $\nu$ are not 1-line rectangles, say $\mu$ and $\nu$. If we define
$\rho =(a+1,a^{i-1},b^{j-1},b-1)$, then one can check using \eqref{32-eq} that 
\[
c_{(a^i)(b^j)}^{\rho} = c_{(a^i)(b^j)}^{(a+1,a^{i-1},b^{j-1},b-1)}\ge c_{(a^i)(b)}^{(a+1,a^{i-1},b-1)}\ge c_{(a)(b)}^{(a+1,b-1)}=1
\]
but then \cite[Thm. 3.1]{Stembridge} implies that $\chi^\rho \bullet_A \chi^\nu$ is not multiplicity-free since $\rho$ is not a ``fat hook,'' so $\chi^\lambda \bullet_\A \chi^\mu \bullet_A \chi^\nu$ is also not multiplicity-free.

Thus we reduce to the case when at least two of $\lambda$, $\mu$, and $\nu$ are 1-line rectangles, say $\lambda$ and $\mu$. 
If $\lambda=(a)$, $\mu=(b)$ and $\nu=(c^k)$ with $a\ge b$, then 
the Pieri rules give $c_{(a)(b)}^{(a+b-1,1)}=c_{(a)(b)}^{(a+b)}=1$ while
\eqref{32-eq} implies that 
\[
c_{(a+b-1,1)(c^k)}^{(a+b+c-1,c^{k-1},1)}\ge c_{(a+b-1,1)(c)}^{(a+b+c-1,1)}\ge c_{(a+b-1)(c)}^{(a+b+c-1)}=1
\]
and
\[ c_{(a+b)(c^k)}^{(a+b+c-1,c^{k-1},1)}\ge c_{(a+b)(c)}^{(a+b+c-1,1)}=1,
\]
so 
$\chi^\lambda \bullet_\A \chi^\mu \bullet_A \chi^\nu$
is not multiplicity-free. Next, 
if $\lambda=(a)$, $\mu=(1^b)$ and $\nu=(c^k)$ with $a>b>1,c>1$, then
the Pieri rules give $c_{(a)(1^b)}^{(a+1,1^{b-1})}=c_{(a)(1^b)}^{(a,1^b)}=1$
while  \eqref{32-eq} implies that 
\[
c_{(a+1,1^{b-1})(c^k)}^{(a+c,c^{k-1},1^b)}\ge c_{(a+1,1^{b-1})(c)}^{(a+c,1^b)}\ge c_{(a+1)(c)}^{(a+c,1)}\ge c_{(1)(c)}^{(c,1)}=1
\]
and
\[
c_{(a,1^b)(c^k)}^{(a+c,c^{k-1},1^b)}\ge c_{(a,1^b)(c)}^{(a+c,1^b)}\ge c_{(a)(c)}^{(a+c)}=1
\]
 so 
$\chi^\lambda \bullet_\A \chi^\mu \bullet_A \chi^\nu$
is again not multiplicity-free.
The same result follows in the remaining cases 
since $\chi^\lambda \bullet_\A \chi^\mu \bullet_A \chi^\nu$ is multiplicity-free if and only if
$\chi^{\lambda^\top} \bullet_\A \chi^{\mu^\top} \bullet_A \chi^{\nu^\top}$ is multiplicity-free.

Next let $W = \W_n$   so that $S=\{s_0,s_1,s_2,\dots,s_{n-1}\}$.
There are two cases for $J$, depending on whether $s_0\in S\setminus J$.
First assume $S\setminus J=\{s_0,s_i\}$ for some $i \in [n-1]$. Then $W_J = S_i\times S_{n-i}$ and it suffices to show that $\Ind_{S_{i}\times S_{n-i}}^{\W_n}(\chi^{\lambda}\boxtimes\chi^{\mu})$ is not multiplicity-free for all $\lambda\vdash i$ and $\mu\vdash n-i$.
Since 
$
\Ind_{S_i\times S_{n-i}}^{\W_n}\(\chi^{\lambda}\boxtimes\chi^{\mu}\)=\Ind_{S_n}^{\W_n}\Ind_{S_{i}\times S_{n-i}}^{S_n}\(\chi^{\lambda}\boxtimes\chi^{\mu}\)
=\Ind_{S_n}^{\W_n}\Bigl(\sum_{\nu\vdash n}c_{\lambda\mu}^\nu \chi^\nu\Bigr)
$,
 Lemma~\ref{lrc-lem} tells us that our induced character has $\Ind_{S_n}^{\W_n}(\chi^{\lambda+\mu}+\chi^{\lambda\cup\mu})$
 as a constituent.
But this is not multiplicity-free by \eqref{LR-bc-eq} since $c_{\lambda\mu}^{\lambda+\mu}\geq 1$ and $c_{\lambda\mu}^{\lambda\cup\mu}\geq 1$.

Alternatively suppose $S\setminus J=\{s_i,s_{i+j}\}$ where $0<i<i+j<n$.
Then $W_J = \W_i\times S_{j} \times S_{n-i-j}$ and it suffices to show that 
\[ \Ind_{\W_{i}\times S_{j}\times S_{n-i-j}}^{\W_n}\(\chi^{(\lambda_1,\lambda_2)}\boxtimes\chi^{\mu}\boxtimes\chi^{\nu}\)\\
=\Ind_{\W_{i}\times \W_{n-i}}^{\W_n}\(\chi^{(\lambda_1,\lambda_2)}\boxtimes\Ind_{S_{j}\times S_{n-i-j}}^{\W_{n-i}}\(\chi^{\mu}\boxtimes\chi^{\nu}\)\)
\]
is not multiplicity-free for all
$(\lambda_1,\lambda_2)\vdash i$, $\mu\vdash j$ and $\nu\vdash n-i-j$.
But this is immediate since we have already seen that $\Ind_{S_{j}\times S_{n-i-j}}^{\W_{n-i}}\(\chi^{\mu}\boxtimes\chi^{\nu}\)$ is not multiplicity-free.  

Finally let $W=\WD_n$ for $n\geq 4$ so that $S=\{s_{-1},s_1,s_2,\dots,s_{n-1}\}$.
There are again two cases for $J$, depending on whether $s_{-1}\in S\setminus J$.
First suppose $S\setminus J=\{s_{-1},s_i\}$ for some $i \in [n-1]$.
Then $W_J = S_i \times S_{n-i}$ and it suffices to show that 
$\Ind_{S_{i}\times S_{n-i}}^{\WD_n}\(\chi^{\lambda}\boxtimes\chi^{\mu}\)
$ is not multiplicity-free for all
 $\lambda\vdash i$ and $\mu\vdash n-i$. If $\lambda \neq \mu$ then this holds by 
\eqref{bcd-reciprocity} since we have already seen that $\Ind_{S_{i}\times S_{n-i}}^{\W_n}\(\chi^{\lambda}\boxtimes\chi^{\mu}\)$ 
is not multiplicity-free.

It remains to show that $\Ind_{S_{k}\times S_{k}}^{\WD_n}\(\chi^{\lambda}\boxtimes\chi^{\lambda}\)
$ is not multiplicity-free when $n=2k$ is even and $\lambda\vdash k$.
Again using \eqref{bcd-reciprocity}, it suffices to check
that $\chi^{(\lambda,\lambda)}$ appears in $\Ind_{S_{k}\times S_{k}}^{\W_n}\(\chi^{\lambda}\boxtimes\chi^{\lambda}\)$
with multiplicity at least $3$.
As we already know that $c_{\lambda\lambda}^{\lambda+\lambda}\geq 1$ and $c_{\lambda\lambda}^{\lambda\cup\lambda}\geq 1$, the desired property follows via \eqref{LR-bc-eq} 
once we use \eqref{32-eq} to check that $c_{\lambda\lambda}^\nu\geq 1$ for 
\[\nu := 
\begin{cases}(2\lambda_1,\lambda_2,\lambda_2,\cdots,\lambda_r,\lambda_r)&\text{if }r=\ell(\lambda)>1\\(\lambda_1+1,\lambda_1-1)&\text{if }\ell(\lambda)=1.\end{cases}
\]

In the second case when
  $S\setminus J=\{s_{1},s_i\}$ for some $i \in \{-1\}\sqcup\{2,3,\dots,n-1\}$,
  the desired property follows by a symmetric argument,
  since this case differs from the one just considered by applying the automorphism $\diamond$.
  If $S\setminus J = \{i,j\}$ for some $1\leq i <i+j  <n$ then $W_J = \WD_i \times S_j \times S_{n-i-j}$
and the fact that $\Ind_{\WD_i \times S_j \times S_{n-i-j}}^{\WD_n}(\chi)$ is never multiplicity-free
is immediate from the cases already examined, as in the argument for type $\BC$.
\end{proof}

Suppose $(W,S)$ is an irreducible finite Coxeter system
and $\TT = (J,\cK,\sigma)$ is a model triple for $W$ that is not factorizable. 
Theorem~\ref{tech2-thm} is equivalent to the claim that the character $\chi^\TT$
given by \eqref{chTT-eq}
 is not multiplicity-free. We prove this below.

\begin{proof}[Proof of Theorem~\ref{tech2-thm}]
Since $W$ is irreducible we must have $J \neq S$. 
Let $z$ be the unique minimal-length element in $\cK$ and  let $\theta := \aut(z)$.
If $|S-J| \geq 2$ then $\chi^\TT$ is not multiplicity-free
by Theorem~\ref{tech1-thm}. 

Assume $|S-J| = 1$. 
Since $\theta$ interchanges two irreducible components of $(W_J,J)$,
the irreducible Coxeter system $(W,S)$ must be of type 
$\A_{2n-1}$ for $n\geq 2$ (with $S \setminus J = \{s_n\}$),
$\BC_3$ (with $S\setminus J = \{s_1\}$), $\D_4$ (with $S\setminus J = \{s_2\}$, $\D_7$ (with $S\setminus J = \{s_3\}$), $\mathsf{E}_6$, or $\mathsf{H}_3$.
In all but the first case, one can verify that $\chi^\TT$ is never multiplicity-free
by a finite calculation; we have done this using the computer algebra system \textsf{GAP} \cite{GAP}. 

For the remaining case, suppose $W= S_{2n}$ and $J = \{ s_i : n\neq i \in [2n-1]\}$ so that $W_J = S_n\times S_n$.
Then the automorphism $\theta$ must either be the map 
with $s_i \leftrightarrow s_{n+i}$ for all $i \in [n-1]$
or the map with $s_i \leftrightarrow s_{2n-i}$ for all $i \in [n-1]$.
In both cases the only possibility for $\cK$ is the set $\{ (w\cdot \theta(w)^{-1}, \theta) : w \in S_n\}$,
whose unique minimal-length element  is $z =(1,\theta)$.

The $S_n\times S_n$-centralizer of this element is isomorphic to the diagonal subgroup $\Delta(S_n) = \{ (x,y) \in S_n \times S_n : x=y\} \cong S_n$, on which the linear characters of $S_n\times S_n$ each restrict to $\one$ or $\sgn$.
To show that $\chi^\TT$ is not multiplicity-free it suffices to check that 
$\Ind_{\Delta(S_n)}^{S_{2n}}(\sigma) =\Ind_{S_n\times S_n}^{S_{2n}} \Ind_{\Delta(S_n)}^{S_n\times S_n}(\sigma)$
is not multiplicity-free for each linear character $\sigma$ of $\Delta(S_n)$.

It is a standard exercise using Frobenius reciprocity and the orthogonality relations for irreducible characters that 
$\Ind_{\Delta(S_n)}^{S_n\times S_n}(\sigma)$ is either $\sum_{\lambda \vdash n} \chi^\lambda \boxtimes \chi^\lambda$
or
$\sum_{\lambda \vdash n} \chi^\lambda \boxtimes \chi^{\lambda^\top}$,
so 
$\Ind_{\Delta(S_n)}^{S_{2n}}(\sigma)$ is either $\sum_{\nu\vdash 2n} \Bigl(\sum_{\lambda\vdash n} c_{\lambda\lambda}^\nu \Bigr) \chi^\nu$  or $\sum_{\nu\vdash 2n} \Bigl(\sum_{\lambda\vdash n} c_{\lambda\lambda^\top}^\nu \Bigr) \chi^\nu$.
The second character is not multiplicity-free since $n\geq 2$ and $c_{\lambda\lambda^\top}^\nu = c_{\lambda^\top\lambda}^\nu$. The first character is also not multiplicity-free: when $n=2$ one has $c^{(2,2)}_{(2)(2)} =c^{(2,2)}_{(1,1)(1,1)} =1$, and for $n>2$ one can check that $c^{(n,n-1,1)}_{(n-1,1)(n-1,1)} = 2$
using the Pieri rules and the identity
$\chi^{(n-1,1)} = \chi^{(n-1)}\bullet_\A \chi^{(1)}  - \chi^{(n)}$.
\end{proof}

\end{document}